\newcommand{\Psib}{\mbox{\boldmath$\Psi$}}
\crefname{hypothesis}{Hypothesis}{Hypotheses}
\begin{document}

\title{\Large 
Representations and characters of quantum affine algebras at the crossroads between cluster categorification and quantum integrable models.}
    \author{David Hernandez\thanks{Université Paris Cit\'e, Sorbonne Universit\'e, CNRS, IMJ-PRG, F-75013 Paris, France
		(\email{david.hernandez@imj-prg.fr}).}
    }

\date{}

\maketitle







\begin{abstract} In this lecture, we survey a number of recent results and developments regarding the representation theory  of infinite-dimensional quantum groups (quantum affine algebras and related algebras), as well as their connections with cluster categorification and quantum integrable models. We will also give new examples and conjectures. 
\end{abstract}

{\it 
Keywords: Quantum affine algebras, representation theory, characters, Grothendieck rings, cluster algebras, categorification, quantum integrable models.

MSC: 17B37, 17B10, 17B67, 13F60, 82B23, 81B10, 81R50.
}

\section{Introduction}

Quantum affine algebras are fundamental examples of Drinfeld-Jimbo quantum groups (we refer to \cite{Jim0} and to the seminal ICM lecture by Drinfeld \cite{Dr} introducing the subject). Consider a connected simply connected simple algebraic group $G$ over the complex numbers.
The loop algebra 
\begin{equation}\label{lalg}\mathcal{L}\mathfrak{g} = \mathfrak{g}\otimes \mathbb{C}[t^{\pm 1}]\end{equation} 
defined from its Lie algebra $\text{Lie}(G) = \mathfrak{g}$ admits a universal central extension 
$\hat{\mathfrak{g}} = \mathcal{L}\mathfrak{g} \oplus \mathbb{C} c$ which is called an affine Kac-Moody algebra\footnote{We do not consider the derivation element in this lecture.}, see \cite{Kac}. 
The quantum affine algebra $\mathcal{U}_q(\hat{\mathfrak{g}})$ is a quantization of the universal enveloping algebra of $\hat{\mathfrak{g}}$. It is a Hopf algebra with a very rich representation theory which has been studied intensively during the past thirty-five years from different points of view and with various applications.

We will review some of the results on the intricate structure of relevant categories of representations, in particular on the 
structure of Grothendieck rings and on character formulas in terms of analogs of the Kazhdan-Lusztig conjecture.

We will also focus on recent developments regarding two of the main applications of this theory: one which is at the very origin of quantum groups, namely quantum integrable models, and a more recent one connected to cluster algebras and their categorifications. 
Some of the main results in these directions include monoidal categorification theorems and  Baxter polynomiality.

Certain Grothendieck rings of representations of quantum affine algebras have a cluster structure, whose exchange relations 
contain remarkable relations such as $QQ$-relations. The latter appear in the study of quantum integrable models 
and the dual ordinary differential equations, to establish Bethe Ansatz relations among the roots of Baxter polynomials. 
These polynomials are used to express the eigenvalues of transfer-matrices, which are constructed in terms of quantum affine algebras. 
In an informal way we could sum up the content of this circle of ideas discussed in this lecture in the following square:
$$\xymatrix{ &\text{Cluster Algebras}\ar@{<-->}[dr]^{\qquad \text{Exchange relations}}&   
\\ \text{Quantum Affine Algebras} \ar@{-->}[ur]^{\text{Grothendieck ring}\qquad}\ar@{-->}[dr]_{\text{Transfer-matrices}\qquad}& &  \text{ T/TQ/QQ-relations}
\\ & \text{Spectra of QIM/ODE}\ar@{-->}[ur]_{\qquad\text{Bethe Ansatz}}& }$$
It will be stated precisely and illustrated in the lecture with examples and results. A part of the content is also motivated by 
the geometric Langlands program, quiver varieties and Coulomb branches as explained below. Additional recent subjects will include shifted quantum affine algebras and their truncations.

\section{Quantum affine algebras and their representations}

Consider a complex simple Lie algebra $\mathfrak{g}$ of rank $n$ (for example, the Lie algebra $\mathfrak{g} = \mathfrak{sl}_2$ 
of rank $n = 1$ is already very interesting for the topics in this lecture). 
The untwisted affine Kac-Moody algebra $\hat{\mathfrak{g}}$ is the universal 
central extension of the loop algebra (\ref{lalg}).

For $q\in\mathbb{C}^*$ a quantum parameter (which is assumed to be generic, that is not a root of unity), we have the corresponding quantum affine algebra $\mathcal{U}_q(\hat{\mathfrak{g}})$ (also called affine quantum group). It is a Hopf algebra and a $q$-deformation of the universal enveloping algebra $\mathcal{U}(\hat{\mathfrak{g}})$ of $\hat{\mathfrak{g}}$. This is one of the most important quantum groups in the sense of Drinfeld-Jimbo.

The quantum affine algebra $\mathcal{U}_q(\hat{\mathfrak{g}})$ has a relatively elementary presentation: 
by generators $e_i,\ f_i,\ k_i^{\pm 1}$ ($0\le i\le n$) with the
following relations for $0\leq i,j\leq n$:
\begin{align*}
&k_ik_j=k_jk_i,\quad
&k_ie_jk_i^{-1}=q_i^{C_{i,j}}e_j,\quad k_if_jk_i^{-1}=q_i^{-C_{i,j}}f_j,
\\
&[e_i,f_j]
=\delta_{i,j}\frac{k_i-k_i^{-1}}{q_i-q_i^{-1}},
\\
&\sum_{r=0}^{1-C_{i.j}}(-1)^re_i^{(1-C_{i,j}-r)}e_j e_i^{(r)}=0\quad (i\neq j),
&\sum_{r=0}^{1-C_{i.j}}(-1)^rf_i^{(1-C_{i,j}-r)}f_j f_i^{(r)}=0\quad (i\neq j),
\end{align*}
where $C = (C_{i,j})_{0\leq i,j\leq n}$ is the Cartan matrix of $\hat{\mathfrak{g}}$, 
$D=\mathrm{diag}(d_0,\ldots,d_n)$ is the unique diagonal matrix such
that $B=DC$ is symmetric, the $d_i$'s are relatively prime positive integers and $q_i = q^{d_i}$. 
For $0\le i \le n$ we have set $x_i^{(r)}=x_i^r/[r]_{q_i}!$ ($x_i=e_i,f_i$), with the standard notation for quantum factorials. 
The last two relations are called quantum Serre relations.

\begin{example} For $\mathfrak{g} = \mathfrak{sl}_2$, $C = \begin{pmatrix}2&-2\\-2&2\end{pmatrix}$, we have generators $e_0,e_1,f_0,f_1,k_0^{\pm 1},k_1^{\pm 1}$ with $q_0 = q_1 = q$. One of the quantum Serre relations is
$$e_0^3e_1 - (q^2 + 1 + q^{-2})e_0^2e_1e_0 + (q^2 + 1 + q^{-2}) e_0e_1e_0^2 - e_1e_0^3 = 0.$$
\end{example}


The subalgebra of $\mathcal{U}_q(\hat{\mathfrak{g}})$ generated by the $e_i$, $f_i$, $k_i^{\pm 1}$ with $i\in I = \{1,\cdots n\}$ is isomorphic to 
the quantum group of finite type $\mathcal{U}_q(\mathfrak{g})$. The algebra $\mathcal{U}_q(\hat{\mathfrak{g}})$ has a Hopf algebra structure with the coproduct defined by
\begin{align*}
&\Delta(e_i)=e_i\otimes 1+k_i\otimes e_i,\quad
\Delta(f_i)=f_i\otimes k_i^{-1}+1\otimes f_i,
\quad
\Delta(k_i)=k_i\otimes k_i\,\text{ for $0\leq i\leq n$.}
\end{align*}

One reason for the importance of $\mathcal{U}_q(\hat{\mathfrak{g}})$ is the depth of its representation theory and the plethora of its applications. In this lecture, in addition to the pure understanding of the representations of quantum affine algebras and related algebras, we will focus mainly on two applications: the corresponding quantum integrable models and the categorification of cluster algebras, as well as the relations between these two subjects.

Let $\mathcal{C}$ be the category of finite-dimensional representations of $\mathcal{U}_q(\hat{\mathfrak{g}})$. 
This monoidal category is not semi-simple and not braided. All simple finite-dimensional representations have level $0$, that is, the central element $c = k_0^{a_0}k_1^{a_1}\cdots k_n^{a_n}$ 
acts as the identity or its opposite (here the $a_i$ are the Kac labels; for $\mathfrak{g} = \mathfrak{sl}_2$, we have $c = k_0k_1$). The simple objects of $\mathcal{C}$ have been parametrized in terms of Drinfeld polynomials by Chari-Pressley (see \cite{CP2} and \cite{CH} for a review): 

\begin{theorem} There is a bijection between simple representations\footnote{In this lecture, we will only consider type I representations of quantum enveloping algebras, that is, the eigenvalues of the $k_i$ belong to $q^{\mathbb{Z}}$. Any simple finite-dimensional representation can be obtained from a type I representation by twisting with some signs, see \cite[10.1]{CP2}.}  in $\mathcal{C}$ and $n$-tuples $(P_i(z))_{i\in I}$ of polynomials with complex coefficients and constant term $1$. 
\end{theorem}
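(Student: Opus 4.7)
The plan is to go through the Drinfeld new realization of $\mathcal{U}_q(\hat{\mathfrak{g}})$, in which the algebra is presented by loop generators $x_{i,r}^{\pm}$ ($i\in I$, $r\in\mathbb{Z}$), $h_{i,m}$ ($m\in\mathbb{Z}\setminus\{0\}$) and $k_i^{\pm 1}$, together with a triangular decomposition $\mathcal{U}_q(\hat{\mathfrak{g}}) \simeq \mathcal{U}_q^- \otimes \mathcal{U}_q^0 \otimes \mathcal{U}_q^+$, where $\mathcal{U}_q^{\pm}$ is generated by the $x_{i,r}^{\pm}$ and $\mathcal{U}_q^0$ is the commutative subalgebra generated by the $k_i^{\pm 1}$ and the $h_{i,m}$. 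The idea is to classify simples in $\mathcal{C}$ by their highest $\ell$-weight with respect to $\mathcal{U}_q^0$ and then translate the constraint of finite-dimensionality into a rationality condition on the generating series, which will encode the Drinfeld polynomials.

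First I would define a highest $\ell$-weight vector in $V\in\mathcal{C}$ to be a nonzero $v$ annihilated by all $x_{i,r}^+$ and diagonalizing $\mathcal{U}_q^0$, packaging its eigenvalues into the series
\begin{equation*}
\Phi_i^{\pm}(z) = k_i^{\pm 1}\exp\!\Bigl(\pm(q_i - q_i^{-1})\sum_{m>0} h_{i,\pm m}\, z^{\mp m}\Bigr).
\end{equation*}
Using the classical finite-type weight decomposition and picking a $\mathcal{U}_q(\mathfrak{g})$-weight vector of maximal height, I would show that any simple $V$ in $\mathcal{C}$ has such a highest $\ell$-weight vector $v$, and that $V = \mathcal{U}_q^-\cdot v$ so that the $\ell$-weight $(\Phi_i^{\pm})_{i\in I}$ determines $V$ up to isomorphism (a Verma-style argument in the $\ell$-weight setting: the universal highest $\ell$-weight module has a unique maximal proper submodule). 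Existence of a simple module for each possible $\ell$-weight follows from the same construction applied to an abstract Verma-like module.

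The heart of the proof, and the main obstacle, is to characterize which $\ell$-weights occur in $\mathcal{C}$. For this I would reduce to the rank-one case via the subalgebras $\mathcal{U}_q(\widehat{\mathfrak{sl}_2})_i \subset \mathcal{U}_q(\hat{\mathfrak{g}})$ attached to each simple root $\alpha_i$: restricting a finite-dimensional highest-$\ell$-weight representation yields a finite-dimensional highest-$\ell$-weight $\mathcal{U}_q(\widehat{\mathfrak{sl}_2})$-module, so it suffices to establish the statement in that case. For $\mathcal{U}_q(\widehat{\mathfrak{sl}_2})$ one analyzes the submodule of $V$ generated by $v$ under a single $x_{i,r}^-$ and shows, using the Drinfeld commutation relations between $h_{i,m}$ and $x_{i,r}^{\pm}$, that finite-dimensionality forces the expansions $\Phi_i^+(z)$ at $0$ and $\Phi_i^-(z)$ at $\infty$ to be expansions of a single rational function of the form
\begin{equation*}
q_i^{\deg P_i}\,\frac{P_i(z q_i^{-1})}{P_i(z q_i)}, \qquad P_i(z)\in \mathbb{C}[z],\ P_i(0)=1.
\end{equation*}
Conversely, any such tuple $(P_i)_{i\in I}$ can be realized by a finite-dimensional module obtained as a tensor product (suitably ordered to ensure cyclicity of the chosen highest $\ell$-weight vector) of fundamental representations evaluated at the roots of the $P_i$, proving that every Drinfeld polynomial tuple arises. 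The main technical difficulty is precisely this rank-one rationality statement: controlling the interplay between the infinitely many generators $h_{i,m}$ and the finite-dimensional constraint, which is what produces the polynomial denominator/numerator structure.
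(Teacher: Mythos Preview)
The paper does not actually prove this theorem: it is a survey, and the statement is attributed to Chari--Pressley with a pointer to \cite{CP2} (and the review \cite{CH}) in lieu of a proof. Your outline is essentially the standard Chari--Pressley argument from \cite{CP2}: pass to the Drinfeld new realization, use the triangular decomposition to produce a highest $\ell$-weight vector in any simple object of $\mathcal{C}$, reduce the finite-dimensionality constraint to the rank-one subalgebras $\mathcal{U}_{q_i}(\widehat{\mathfrak{sl}_2})$, and there derive the rationality of the $\Phi_i^\pm(z)$ in the form $q_i^{\deg P_i}\,P_i(zq_i^{-1})/P_i(zq_i)$; the converse is obtained from tensor products of fundamental representations. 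So your proposal is correct and matches the proof the paper is citing, though the paper itself contains no argument to compare against.
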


\begin{example} For $i\in I$ and $a\in\mathbb{C}^*$, we have the fundamental representation $V_i(a)$ which corresponds to 
the $n$-tuple of Drinfeld polynomials $(1,\cdots, 1, \underbrace{1 - za}_{\text{position }i}, 1, \cdots, 1)$.
\end{example}

\begin{example}\label{evalj} Let $\mathfrak{g} = \mathfrak{sl}_2$ and consider the finite-type quantum group $\mathcal{U}_q(\mathfrak{sl}_2)$ with generators $E$, $F$, $K^{\pm 1}$ and the usual relations $KE = q^2EK$, $KF = q^{-2}FK$, $[E,F] = \frac{K - K^{-1}}{q - q^{-1}}$.
For every $a\in\mathbb{C}^*$, we have a surjective algebra homomorphism
$ev_a : \mathcal{U}_q(\widehat{\mathfrak{sl}_2}) \rightarrow \mathcal{U}_q(\mathfrak{sl}_2)$ such that:
$$ev_a(e_1) = E,\quad ev_a(f_1) = F,\quad ev_a(e_0) = q^{-1}aF,\quad ev_a(f_0) = qa^{-1}E.$$
The simple finite-dimensional $\mathcal{U}_q(\mathfrak{sl}_2)$-modules are known to be classified by their dimension: for every $N\in \mathbb{Z}_{\ge 0}$ there is a unique (up to isomorphism) simple module $V_N$ of dimension $N+1$.
Therefore, pulling back by the evaluation morphisms $ev_a$, we get a one-parameter family of simple $\mathcal{U}_q(\widehat{\mathfrak{sl}_2})$-modules $V_N(a)$ ($a\in \mathbb{C}^*$) of dimension $N+1$. The representations $V_0(a)$ are all equal to the trivial representation. Otherwise, for $N\geq 1$, the simple modules $V_N(a)$ and $V_N(b)$ are non-isomorphic if $a \not = b$. 
The modules $V_n(a)$ are called \emph{evaluation modules}, and the $V_1(a)$ are the fundamental representations.
\end{example}

\begin{example}\label{evaljg} More generally, there exists an evaluation morphism as in the previous example for $\mathfrak{g}$ of type $A$ by \cite{Jim}, but not for a general finite type. This is one of the main differences with the classical case ($ q = 1$) and a source of many technical difficulties. For example, let $\mathfrak{g}$ be of type $D_4$. The fundamental representation $V_2(a)$ of $\mathcal{U}_q(\hat{\mathfrak{g}})$ associated to the trivalent node $i = 2$ and $a\in \mathbb{C}^*$ has dimension $29$ and is not simple as a $\mathcal{U}_q(\mathfrak{g})$-module (it decomposes as a direct sum of the fundamental representation of dimension $28$ and the trivial representation). This is an obstruction to the existence of an evaluation morphism from $\mathcal{U}_q(\hat{\mathfrak{g}})$ to $\mathcal{U}_q(\mathfrak{g})$ in this case.
\end{example}

\begin{theorem}\label{stgr}\cite{Fre} The Grothendieck ring 
$K(\mathcal{C})$ of $\mathcal{C}$ is commutative and polynomial 
$$K(\mathcal{C})\simeq \mathbb{Z}[X_{i,a}]_{i\in I, a\in\mathbb{C}^*},$$
with $X_{i,a}$ is the class $[V_{i,a}]$ class of the fundamental representation $V_i(a)$.
\end{theorem}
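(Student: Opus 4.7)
The plan is to follow Frenkel--Reshetikhin's strategy based on the $q$-character homomorphism. From the Drinfeld realization of $\mathcal{U}_q(\hat{\mathfrak{g}})$, the commutative ``loop Cartan'' elements $\phi_{i,r}^{\pm}$ act on any $V\in\mathcal{C}$ by commuting operators; their joint generalized eigenspace decomposition produces the $\ell$-weight decomposition of $V$, whose generating-function-encoded eigenvalues define a ring map
$$\chi_q : K(\mathcal{C}) \longrightarrow \mathbb{Z}[Y_{i,a}^{\pm 1}]_{i\in I,\,a\in\mathbb{C}^*}.$$
Because the target is commutative and (as I will argue below) $\chi_q$ is injective, this already forces $K(\mathcal{C})$ to be commutative; equivalently, one can invoke the meromorphic $R$-matrix, which gives $V(z)\otimes W\simeq W\otimes V(z)$ for generic spectral parameters, so that the composition factors of $V\otimes W$ and $W\otimes V$ coincide.

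To obtain the polynomial structure, I would compare the basis $\{[L(m)]\}$ of simples (indexed by dominant monomials $m = \prod Y_{i,a}^{u_{i,a}}$, equivalently by $n$-tuples of Drinfeld polynomials via $P_i(z)=\prod_a(1-za)^{u_{i,a}}$) to the classes of \emph{standard modules}
$$M(m) := \bigotimes_{(i,a)} V_i(a)^{\otimes u_{i,a}},$$
whose class in $K(\mathcal{C})$ is well defined thanks to the commutativity just established, and is literally the monomial $\prod X_{i,a}^{u_{i,a}}$. The key point is that $L(m)$ occurs with multiplicity one in $M(m)$, while every other composition factor $L(m')$ satisfies $m'<m$ in the Nakajima partial order on monomials. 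Equipping the set of dominant monomials with any total refinement of this order, the transition matrix $([L(m')] \mapsto [M(m)])$ is unitriangular, so the standard classes also form a $\mathbb{Z}$-basis of $K(\mathcal{C})$. Since they are exactly the distinct monomials in the $X_{i,a}$, the family $(X_{i,a})$ is algebraically independent and generates $K(\mathcal{C})$ as a ring, giving the claimed isomorphism.

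The main obstacle is the triangularity statement, which rests on two facts I would need to establish carefully: injectivity of $\chi_q$ and the dominance of the highest $\ell$-weight. For injectivity, the highest-$\ell$-weight theory of Chari--Pressley shows that each $L(m)$ has $m$ appearing in $\chi_q(L(m))$ with multiplicity one and no other dominant monomial appears, so simple classes map to linearly independent elements. For dominance, one shows that all $\ell$-weights of $V_i(a)$ other than $Y_{i,a}$ are obtained from $Y_{i,a}$ by multiplying by monomials $A_{j,b}^{-1}$ in Frenkel--Reshetikhin's ``simple roots,'' and this property is stable under tensor product; hence the only dominant monomial of $M(m)$ sitting at the top is $m$ itself, forcing $L(m)$ to be a composition factor and all others to have strictly smaller highest $\ell$-weight. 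Everything else in the argument is formal unitriangular linear algebra.
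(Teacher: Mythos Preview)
The paper does not give its own proof of this theorem; it simply cites \cite{Fre}. Your outline is precisely the Frenkel--Reshetikhin argument, and the overall strategy (define $\chi_q$, use highest-$\ell$-weight theory to get a unitriangular change of basis between simples and standard monomials) is correct.

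One inaccuracy to flag: in your injectivity paragraph you assert that in $\chi_q(L(m))$ ``no other dominant monomial appears.'' That is false in general --- simple modules can have several dominant monomials in their $q$-character (this happens already in type $D$; it is exactly the situation in which the Frenkel--Mukhin algorithm fails). Fortunately you never actually use this. What you need, and what you correctly argue for $M(m)$, is that $m$ is the unique \emph{maximal} $\ell$-weight for the Nakajima order and occurs with multiplicity one; the same holds for $L(m)$ because $L(m)$ is the subquotient of $M(m)$ containing the one-dimensional $\ell$-weight space of weight $m$. Linear independence of the $\chi_q(L(m))$ then follows from reading off leading terms in any total refinement of the partial order --- the very unitriangularity you invoke for the standard classes. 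With that wording corrected, the sketch goes through.
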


The reader may refer to the recent lecture \cite{H} for generalities on Grothendieck rings. The commutativity of $K(\mathcal{C})$ means that the Jordan-H\"older multiplicities of simple modules are the same in $V\otimes V'$ 
and in $V'\otimes V$. But these modules are not isomorphic in general. For example for $\mathfrak{g} = sl_2$, 
$V_1(q^2)\otimes V_1(1)$ and $V_1(1)\otimes V_1(q^2)$ are not isomorphic. Indeed $V_1(q^2)\otimes V_1(1)$ has a unique proper submodule, which has dimension $1$, and $V_1(1)\otimes V_1(q^2)$ has a unique proper submodule, which has dimension $3$.

Although, as a ring, the structure of $K(\mathcal{C})$ is understood by the Theorem \ref{stgr}, it is much more intricate as a based ring: the basis of simple classes is difficult to describe in terms of the basis of monomials in the fundamental classes (also called the standard basis). In fact, this 
description is not known in general. This is closely related to the intricate combinatorics of cluster algebras as discussed below. For the simply-laced types, there is an answer based on the study of perverse sheaves on quiver varieties by Nakajima (see Theorem \ref{klsl} below).

The quantum affine algebra also has many interesting other infinite-dimensional simple representations, for example the evaluation of simple Verma modules of $\mathcal{U}_q(\widehat{\mathfrak{sl}_2})$ (they are also of level $0$). They belong to the category $\hat{\mathcal{O}}$ that will be discussed below.

\begin{remark}The representations of $\mathcal{U}_q(\hat{\mathfrak{g}})$ are closely related to representations of Yangians, see \cite{GTL}.\end{remark}

\section{Quantum integrable models}\label{qims}

The historical motivation to study the representation theory of affine quantum groups is closely related to quantum integrable models, more precisely to $XXZ$-models and their variations. Recall that, by Drinfeld \cite{Dr}, the quantum affine algebra $\mathcal{U}_q(\hat{\mathfrak{g}})$ has the fundamental property of having a universal $R$-matrix.

\begin{theorem} There exists a non-trivial formal power series
$$\mathcal{R}(z)\in \mathcal{U}_q(\hat{\mathfrak{g}})^{\hat{\otimes} 2}[[z]]$$
with coefficients in a completion of the tensor square of $\mathcal{U}_q(\hat{\mathfrak{g}})$, satisfying several properties including the quantum Yang-Baxter equation
$$\mathcal{R}_{12}(z)\mathcal{R}_{13}(zw)\mathcal{R}_{23}(w) = \mathcal{R}_{23}(w)
\mathcal{R}_{13}(zw) \mathcal{R}_{12}(z)$$
with the standard notations $\mathcal{R}_{12}(z) = \mathcal{R}(z)\otimes 1$, $\mathcal{R}_{23}(z) = 1 \otimes \mathcal{R}(z)$, $\mathcal{R}_{13}(z) = (P\otimes \text{Id})(\mathcal{R}_{23}(z))$ where $P$ is the flip operator $P(x\otimes y) = y\otimes x$.
\end{theorem}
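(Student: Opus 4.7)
The plan is to follow Drinfeld's original quantum double construction, which produces the universal $R$-matrix as the canonical element of a non-degenerate Hopf pairing between two halves of $\mathcal{U}_q(\hat{\mathfrak{g}})$. First, I would establish a triangular decomposition $\mathcal{U}_q(\hat{\mathfrak{g}}) \cong \mathcal{U}_q(\hat{\mathfrak{n}}^-) \otimes \mathcal{U}_q(\hat{\mathfrak{h}}) \otimes \mathcal{U}_q(\hat{\mathfrak{n}}^+)$, and then identify the two Borel halves $\mathcal{U}_q(\hat{\mathfrak{b}}^\pm)$ generated by the $e_i, k_i^{\pm 1}$ and the $f_i, k_i^{\pm 1}$ respectively. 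Each of these is a Hopf subalgebra (with $\mathcal{U}_q(\hat{\mathfrak{b}}^-)$ equipped with the opposite coproduct), graded by the positive (respectively negative) part of the affine root lattice $\hat{Q}^\pm$.

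The second step is to construct a non-degenerate Hopf pairing $\langle\,,\,\rangle : \mathcal{U}_q(\hat{\mathfrak{b}}^+) \otimes \mathcal{U}_q(\hat{\mathfrak{b}}^-) \to \mathbb{C}$ compatible with the coproducts, which is uniquely determined on generators by $\langle e_i, f_j\rangle = -\delta_{i,j}/(q_i - q_i^{-1})$ together with the prescribed pairing of the Cartan generators $k_i$. Non-degeneracy is a standard but delicate verification using the quantum Serre relations. Then $\mathcal{U}_q(\hat{\mathfrak{g}})$ is realized (up to identifying the two Cartan subalgebras) as a quotient of the quantum double $D(\mathcal{U}_q(\hat{\mathfrak{b}}^+), \mathcal{U}_q(\hat{\mathfrak{b}}^-))$. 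A general feature of the quantum double construction is that the canonical element
\begin{equation*}
\mathcal{R} \;=\; \mathcal{K} \cdot \sum_\beta \sum_I x_{\beta,I} \otimes x^{\beta,I}
\end{equation*}
built from dual bases $\{x_{\beta,I}\}, \{x^{\beta,I}\}$ of the homogeneous pieces $\mathcal{U}_q(\hat{\mathfrak{n}}^+)_\beta$ and $\mathcal{U}_q(\hat{\mathfrak{n}}^-)_{-\beta}$ (and a Cartan factor $\mathcal{K}$ encoding $q^{\sum h_i \otimes h_i^\vee}$) automatically satisfies the quasi-triangularity axioms $(\Delta \otimes \text{id})(\mathcal{R}) = \mathcal{R}_{13}\mathcal{R}_{23}$, $(\text{id} \otimes \Delta)(\mathcal{R}) = \mathcal{R}_{13}\mathcal{R}_{12}$, and $\Delta^{op}(x)\mathcal{R} = \mathcal{R}\Delta(x)$; the Yang-Baxter equation then follows formally from these three identities.

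The third step is to incorporate the spectral parameter $z$. The affine algebra carries a $\mathbb{Z}$-grading by principal (or homogeneous) degree, extending the natural grading of $\hat{\mathfrak{g}} = \mathfrak{g}\otimes\mathbb{C}[t^{\pm 1}]\oplus \mathbb{C} c$ by powers of $t$; let $\tau_z$ be the corresponding one-parameter group of automorphisms. Then $\mathcal{R}(z) := (\tau_z \otimes \text{id})(\mathcal{R})$ lies in the completed tensor square $\mathcal{U}_q(\hat{\mathfrak{g}})^{\hat\otimes 2}[[z]]$ because each homogeneous piece appears with a strictly positive power of $z$, so that summing over $\beta \in \hat{Q}^+$ makes sense degree by degree in $z$. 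Applying $\tau_z \otimes \tau_w \otimes \text{id}$ to the spectral-parameter-free Yang-Baxter equation yields the spectral form stated in the theorem.

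The main obstacle is the construction and pairing of \emph{imaginary root vectors}: unlike the finite-type case, $\mathcal{U}_q(\hat{\mathfrak{n}}^\pm)$ has infinitely many generators in each imaginary root space (indexed by the Heisenberg-type subalgebra associated to the null root $\delta$), so one must carefully choose a PBW-type basis (via Lusztig's braid group action together with Beck's construction of imaginary root vectors) and compute its dual explicitly to make the canonical element well-defined. Once this combinatorial input is in place, the formal identities of the quantum double and the grading argument give the spectral Yang-Baxter equation with essentially no further work.
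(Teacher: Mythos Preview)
The paper does not actually prove this theorem: it is a survey lecture, and the result is simply attributed to Drinfeld \cite{Dr} and stated without argument. Your outline is precisely Drinfeld's quantum double construction, so you are reconstructing the proof from the very reference the paper cites; in that sense there is nothing to compare.

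Your sketch is broadly correct, but two technical points deserve a flag. First, the Cartan factor $\mathcal{K}$ (your $q^{\sum h_i\otimes h_i^\vee}$) does not literally belong to $\mathcal{U}_q(\hat{\mathfrak{g}})^{\hat\otimes 2}$ since the $h_i$ are not elements of the algebra, only the $k_i = q_i^{h_i}$ are; one must either enlarge the algebra or, more commonly, work only with the image of $\mathcal{R}(z)$ on weight modules, where $\mathcal{K}$ acts by well-defined scalars. Second, your claim that ``each homogeneous piece appears with a strictly positive power of $z$'' needs care: in the homogeneous grading the real root vectors of height zero in the finite part carry degree $0$, so the $z$-adic convergence comes rather from the fact that for fixed $z$-degree only finitely many $\beta\in\hat Q^+$ contribute, and each such graded piece is finite-dimensional. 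These are standard fixes (handled e.g.\ via Beck's PBW basis and Damiani's computations), but they are exactly where the work lies, as you correctly identify in your final paragraph.
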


The existence of the universal $R$-matrix has many important consequences, in particular it enables the construction of quantum integrable systems 
(see \cite{Hb} for a review). Let us recall the main points. Let $V$ be a finite-dimensional representation of $\mathcal{U}_q(\hat{\mathfrak{g}})$ that we call 
``auxiliary space". The corresponding transfer-matrix is:
\begin{equation}\label{transfer}
\mathcal{T}_V(z) = ((\operatorname{Tr}_V \circ \rho_V) \otimes \operatorname{id})({\mathcal{R}(z)})\in \mathcal{U}_q(\hat{\mathfrak{g}})[[z]],
\end{equation}
where $\operatorname{Tr}_V$ is the trace defined on $\text{End}(V)$ and $\rho_V$ is the algebra morphism corresponding the representation $V$.

\begin{remark}\label{trace} It is also useful to introduce a twist in the definition of the transfer-matrices: the 
auxiliary space $V$ has a natural grading 
$V = \bigoplus_{\omega = (\omega_1,\ldots,\omega_n)\in\mathbb{Z}^n} V_\omega$ by finite-dimensional weight spaces which are the common eigenspaces of the $k_i^{\pm 1}$ 
($n$ is the rank of $\mathfrak{g}$). Then the trace $\operatorname{Tr}_V$ 
is replaced by $\sum_{\omega\in\mathbb{Z}^n} (u_1^{\omega_1}\ldots u_n^{\omega_n})\text{Tr}_{V_\omega}$
for formal variables $u_i$ with $i\in I$. We obtain the twisted transfer-matrix $\mathcal{T}_{u,V}(z)$. 
This will be useful for handling infinite-dimensional auxiliary spaces below.
\end{remark}

\begin{prop} The transfer-matrices commute: for $V$ and $V'$ in $\mathcal{C}$ we have
$$\mathcal{T}_V(z)\mathcal{T}_{V'}(z') = \mathcal{T}_{V'}(z')\mathcal{T}_V(z)\text{ in }\mathcal{U}_q(\hat{\mathfrak{g}})[[z,z']].$$
The same holds for twisted transfer-matrices for a fixed twist parameter $u =(u_1,\cdots, u_n)$.
\end{prop}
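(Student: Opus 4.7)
The plan is to run the classical ``railroad'' or RTT argument built on the Yang--Baxter equation, lifted to the third slot $\mathcal{U}_q(\hat{\mathfrak{g}})$ rather than to a third auxiliary space. First, I would introduce the $L$-operators: for any finite-dimensional representation $W$, set
\[
L_W(z) \;=\; (\rho_W \otimes \mathrm{id})(\mathcal{R}(z)) \;\in\; \mathrm{End}(W)\otimes \mathcal{U}_q(\hat{\mathfrak{g}})[[z]],
\]
so that $\mathcal{T}_W(z) = (\mathrm{Tr}_W\otimes \mathrm{id})(L_W(z))$ by the very definition \eqref{transfer}.

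Next, I would apply $(\rho_V\otimes \rho_{V'}\otimes \mathrm{id})$ to the quantum Yang--Baxter equation in $\mathcal{U}_q(\hat{\mathfrak{g}})^{\hat\otimes 3}[[z,z']]$, after the usual change of spectral variables $z\mapsto z/z'$, $w\mapsto z'$, $zw\mapsto z$. This yields the RTT relation in $\mathrm{End}(V\otimes V')\otimes \mathcal{U}_q(\hat{\mathfrak{g}})$:
\[
R_{VV'}(z/z')\,L_V(z)_{13}\,L_{V'}(z')_{23} \;=\; L_{V'}(z')_{23}\,L_V(z)_{13}\,R_{VV'}(z/z'),
\]
where $R_{VV'}(u) := (\rho_V\otimes \rho_{V'})(\mathcal{R}(u))$ is the matrix $R$-matrix on $V\otimes V'$. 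Using that $R_{VV'}(z/z')$ is invertible on finite-dimensional representations (this is one of the standard properties satisfied by $\mathcal{R}(z)$ after an appropriate rescaling of fundamental representations, and extends to arbitrary $V,V'\in\mathcal{C}$), I can rewrite this as
\[
L_V(z)_{13}\,L_{V'}(z')_{23} \;=\; R_{VV'}(z/z')^{-1}\,L_{V'}(z')_{23}\,L_V(z)_{13}\,R_{VV'}(z/z').
\]
Now apply the partial trace $(\mathrm{Tr}_{V\otimes V'}\otimes \mathrm{id})$. On the left one obtains $\mathcal{T}_V(z)\,\mathcal{T}_{V'}(z')$ by the multiplicativity of $\mathrm{Tr}$ on $V\otimes V'$; on the right, cyclicity of the trace eliminates the conjugation by $R_{VV'}(z/z')$, leaving $\mathcal{T}_{V'}(z')\,\mathcal{T}_V(z)$. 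This is the desired commutativity in $\mathcal{U}_q(\hat{\mathfrak{g}})[[z,z']]$.

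For the twisted version described in Remark \ref{trace}, I would insert the twist operator $U = u_1^{h_1}\cdots u_n^{h_n}$ acting on each auxiliary space. The crucial point is that the universal $R$-matrix lies in a completion of $\mathcal{U}_q^{\geq 0}\otimes \mathcal{U}_q^{\leq 0}$ and is weight-preserving with respect to the Cartan, so $R_{VV'}(u)$ commutes with $U_V\otimes U_{V'}$ for the same twist $u$. Thus $U_V\otimes U_{V'}$ can be inserted and moved through $R_{VV'}(z/z')^{\pm 1}$ in the computation above without changing the argument; cyclicity of the trace still applies since the twist is just a Cartan element.

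The main obstacle is the technical step of controlling the $R$-matrix: ensuring convergence/well-definedness of $R_{VV'}(z/z')$ on $V\otimes V'$ as a meromorphic (or formal) object and its invertibility. This is handled by the standard normalization turning $\mathcal{R}(z)$ into a rational $R$-matrix on finite-dimensional representations; the scalar prefactor cancels between $R_{VV'}$ and $R_{VV'}^{-1}$, so it does not affect the trace identity. All the other steps — the YBE, the RTT relation, and the partial-trace manipulation — are formal and automatic once the $L$-operators and the invertibility of $R_{VV'}$ are in place.
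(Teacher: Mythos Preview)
Your argument is correct and is precisely the standard derivation the paper is alluding to when it says that this ``follows from the quantum Yang--Baxter equation''; the paper gives no further details, and your RTT-plus-partial-trace computation is the canonical way to unpack that sentence. The only delicate point you flag --- making sense of $R_{VV'}(z/z')$ and its invertibility in the formal series setting --- is indeed the one nontrivial technicality, and your resolution via the rationality (up to scalar) of the $R$-matrix on objects of $\mathcal{C}$ is the standard one; alternatively one can avoid the substitution $z\mapsto z/z'$ altogether by tracing the Yang--Baxter equation in its native variables $(z,w)$ to obtain $\mathcal{T}_V(zw)\mathcal{T}_{V'}(w)=\mathcal{T}_{V'}(w)\mathcal{T}_V(zw)$ in $\mathcal{U}_q(\hat{\mathfrak g})[[z,w]]$, and then identify coefficients.
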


This fundamental ``integrability" property follows from the quantum Yang-Baxter equation. It means that the coefficients $\mathcal{T}_V[N]\in\mathcal{U}_q(\hat{\mathfrak{g}})$ of all the transfer-matrices $\mathcal{T}_V(z) = \sum_{N\geq 0}z^N \mathcal{T}_V[N]$ commute.

Also, the properties of the trace and of $\mathcal{R}(z)$ imply that the map $V\mapsto \mathcal{T}_V(z)$ defines a ring morphism 
\begin{equation}\label{take}\mathcal{T}(z) : K(\mathcal{C})\rightarrow (\mathcal{U}_q(\hat{\mathfrak{g}}))[[z]].\end{equation}
This ring morphism is injective (see \cite{Fre}).

Now consider $W$ another finite-dimensional representation in $\mathcal{C}$ ($W$ is called the ``quantum space"). Each transfer-matrix $\mathcal{T}_V(z)$ defines an operator on $W[[z]] = W\otimes \mathbb{C}[[z]]$. Hence, $W[[z]]$ has the structure of a representation of the Grothendieck ring $K(\mathcal{C}$). The coefficients $\mathcal{T}_V[N]$ act on $W$ by a large family of commutative operators. This is a quantum system and the next step is to study the spectrum of the quantum system, that is, the eigenvalues of the transfer-matrices on $W$.

\begin{example}
The case where $W$ is a tensor product of finite-dimensional simple representations of $\mathcal{U}_q(\hat{\mathfrak{g}})$ is remarkable.  Indeed for ${\mathfrak g} = \mathfrak{sl}_2$, $V$ a $2$-dimensional fundamental representation and the quantum space $W$ a tensor product of fundamental representations, we recover the historical case of the $XXZ$ model (spin chain model).
 The image of $\mathcal{T}_{V}(z)$ in $\text{End}(W)[[z]]$ is the original transfer-matrix defined by Baxter to 
solve the model\footnote{Baxter introduced the powerful method of "Q-operators", which he used to solve the more involved 8 vertex model. The 6 vertex model has also been solved with other methods, in particular in the works of Lieb and Sutherland (1967).}. 
\end{example}

By \cite{Fre}, each representation $V$ in $\mathcal{C}$ has a $q$-character $\chi_q(V)\in \mathbb{Z}[Y_{i,a}^{\pm 1}]_{i\in I, a\in\mathbb{C}^*}$. It can be defined from a limit of the twisted transfer-matrix $\mathcal{T}_{u,V}(z)$, or by analyzing the decomposition into common generalized eigenspaces of a family of elements in $\mathcal{U}_q(\hat{\mathfrak{g}})$ whose action on $V$ commute (the Drinfeld generators $h_{i,r}$ with $i\in I, r < 0$). Although the $q$-character of a general simple module in $\mathcal{C}$ is not known, 
the $q$-character $\chi_q(V_i(a))$ of fundamental representations can be computed from an algorithm \cite{FM}.

\begin{example}\label{exqc} For $\mathfrak{g} = \mathfrak{sl}_2$, we have $\chi_q(V_1(a)) = Y_{1,a} + Y_{1,aq^2}^{-1}$.
\end{example}

The following polynomiality extends results of Baxter (for $\mathfrak{g} = \mathfrak{sl}_2$) and establishes a conjecture of \cite{Fre}.

\begin{theorem}\label{bthm}\cite{FH} The eigenvalues $\lambda_k$ of $\mathcal{T}_{V,u}(z)$ on $W[[z]]$ are obtained from $\chi_q(V)$ by replacing each $Y_{i,a}$ by a quotient $y_{i,k}\frac{f_i(azq_i^{-1})Q_{i,k}(zaq_i^{-1})}{f_i(azq_i)Q_{i,k}(zaq_i)}$ where $Q_{i,k}(z)$ is polynomial, $y_{i,k}$ is constant and
the functions $f_i(z)$ depend only on $W$.
\end{theorem}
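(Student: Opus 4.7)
The plan is to realize the polynomials $Q_{i,k}(z)$ as eigenvalues of Baxter $Q$-operators that are themselves \emph{transfer-matrices}, but built from infinite-dimensional representations that do not live in $\mathcal{C}$. To have such transfer-matrices, one must enlarge the category: pass to the Borel subalgebra $\mathcal{U}_q(\hat{\mathfrak{b}})\subset \mathcal{U}_q(\hat{\mathfrak{g}})$ and work in its category $\mathcal{O}$ of representations whose weight spaces are finite-dimensional and whose weights lie in a finite union of cones. This category contains the restrictions of the objects of $\mathcal{C}$, and additionally contains the prefundamental representations $L_{i,a}^+$ (Hernandez--Jimbo): infinite-dimensional highest $\ell$-weight modules whose loop-Cartan action is trivial except for one factor $1-za$ at node $i$. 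The twist $u$ in Remark~\ref{trace} is essential: it regularizes the trace over an infinite-dimensional weight-graded space, and gives a twisted transfer-matrix $\mathcal{T}_{u,L_{i,a}^+}(z)$ whose action on the finite-dimensional quantum space $W$ is well-defined.

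Next I would extend the construction of transfer-matrices (\ref{transfer}) to the auxiliary spaces $L_{i,a}^+$. This requires lifting the universal $R$-matrix to an operator intertwining $L_{i,a}^+\otimes W$ with $W\otimes L_{i,a}^+$. Using the Khoroshkin--Tolstoy-type factorization one gets a well-defined operator on $L_{i,a}^+\otimes W[[z]]$, and together with the twist by $u$ one can take the partial trace to obtain $\mathcal{T}_{u,L_{i,a}^+}(z)$. The commutation of these operators with the finite-dimensional transfer-matrices $\mathcal{T}_{V,u}(z)$ follows from the extended Yang-Baxter equation. These will play the role of Baxter $Q$-operators; their eigenvalues on $W$ factor as (a prefactor depending only on the highest weight of $W$ at node $i$, to be absorbed into $f_i$) times a polynomial $Q_{i,k}(z)$, the factor $y_{i,k}$ being a diagonal scalar coming from the $u$-twisted action on the highest weight line of $L_{i,a}^+$.

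The algebraic heart is an identity in the Grothendieck ring $K(\mathcal{O})$ (suitably completed) that computes each fundamental class in terms of prefundamental classes. Concretely, one establishes a ``$TQ$-relation'' of the form
\[
[V_i(a)]\cdot [L_{i,aq_i}^+] \;=\; \omega_{i,a}\, [L_{i,aq_i^{-1}}^+] \;+\; \text{(lower terms)},
\]
and more generally that the substitution
\[
Y_{i,a}\;\longmapsto\; \frac{[\omega_{i,a}]\,[L_{i,aq_i^{-1}}^+]}{[L_{i,aq_i}^+]}
\]
realizes an injective ring morphism from the $q$-character ring to a localization of $K(\mathcal{O})$ that intertwines $\chi_q$ with the class map. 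The check uses the Frenkel--Mukhin algorithm and the known $q$-characters of the fundamental representations (Example~\ref{exqc} for $\mathfrak{sl}_2$ as the base case, and induction on the height via the $T$-system relations for iterated products). Applying the transfer-matrix morphism (\ref{take}) and reading off eigenvalues on $W$ then converts this Grothendieck ring substitution into exactly the substitution in the statement, with $f_i(z)$ the highest-weight prefactor and $Q_{i,k}(z)$ the polynomial eigenvalues of the $Q$-operators.

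The hardest step, and the one I would dwell on, is proving \emph{polynomiality} of $Q_{i,k}(z)$. A priori the $Q$-operator eigenvalues are only formal series in $z$; showing they truncate to polynomials requires exploiting the specific structure of $L_{i,a}^+$: although infinite-dimensional, the trace over its weight-grading produces, on the finite-dimensional space $W$, an operator whose $z$-dependence is rational with poles controlled by the highest-weight prefactor $f_i$, and once $f_i$ is stripped off the remaining denominator is trivial because $W$ is finite-dimensional and $L_{i,a}^+$ has a single highest $\ell$-weight factor $(1-za)$. Carrying this out rigorously demands a careful asymptotic analysis of the twisted transfer-matrix as $u\to $ generic, plus a verification that the various normalization ambiguities in the choice of $R$-matrix, in the twist, and in the highest-weight vectors are absorbed into the single functions $f_i(z)$ and the constants $y_{i,k}$ of the statement.
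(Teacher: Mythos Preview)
Your proposal is correct and follows essentially the same route as the paper: enlarge to the category $\mathcal{O}$ of $\mathcal{U}_q(\hat{\mathfrak{b}})$-modules, build $Q$-operators as twisted transfer-matrices attached to the prefundamental representations $L_{i,a}^+$, establish the generalized Baxter $TQ$-relations expressing $[V]$ via the substitution $Y_{i,a}\mapsto [\omega_i]\,[L_{i,aq_i^{-1}}^+]/[L_{i,aq_i}^+]$ in $K(\mathcal{O})$, and prove polynomiality of the $Q$-operator eigenvalues up to a scalar prefactor. One minor technical point: in \cite{FH} the $Q$-operator is actually built from a representation $R_{i,1}^+$ dual to $L_{i,1}^+$ rather than $L_{i,1}^+$ itself, though the paper notes the statements hold for either choice; also, the well-definedness of the transfer-matrix for auxiliary spaces in $\mathcal{O}$ is phrased more directly as the observation that the left tensor factors of $\mathcal{R}(z)$ lie in $\mathcal{U}_q(\hat{\mathfrak{b}})$, rather than via an explicit Khoroshkin--Tolstoy factorization.
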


To prove this theorem,  we use infinite-dimensional representations as
explained in the next section.

\section{Quantum affine Borel} Consider now the quantum affine Borel $\mathcal{U}_q(\hat{\mathfrak{b}})$, the subalgebra of $\mathcal{U}_q(\hat{\mathfrak{g}})$  generated by the $e_i, k_i^{\pm 1}$ with $0\leq i\leq n$. $\mathcal{U}_q(\hat{\mathfrak{b}})$ has the crucial property of being a Hopf subalgebra of $\mathcal{U}_q(\hat{\mathfrak{g}})$. The proof of Theorem \ref{bthm} is based on the study of a category $\mathcal{O}$ of representations of $\mathcal{U}_q(\hat{\mathfrak{b}})$ which was introduced in \cite{HJ}. The category $\mathcal{O}$ is the category of $\mathcal{U}_q(\hat{\mathfrak{b}})$-modules with a diagonal action of the $k_i$ and 
finite-dimensional weight spaces (plus a usual technical cone condition on the weights). It is a monoidal category. The simple 
objects in $\mathcal{C}$ (the simple finite-dimensional representations of $\mathcal{U}_q(\hat{\mathfrak{g}})$) are still irreducible when the action is restricted to $\mathcal{U}_q(\hat{\mathfrak{b}})$. Hence we can see 
the simples in $\mathcal{C}$ as simples in $\mathcal{O}$ and we can see $K(\mathcal{C})$ as a (based) subring of $K(\mathcal{O})$.

\begin{theorem}\cite{HJ} The simple objects $L^{\mathfrak{b}}(\Psib)$ in $\mathcal{O}$ are parametrized by $n$-tuples of rational fractions $\Psib = (\Psib_i(z))_{i\in I}$ regular at $0$ (without pole or zero at $0$).
\end{theorem}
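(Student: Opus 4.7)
The plan is to set up a highest $\ell$-weight theory for $\mathcal{U}_q(\hat{\mathfrak{b}})$ modelled on the Chari--Pressley classification in $\mathcal{C}$, but allowing rational rather than polynomial data. I would proceed in five steps.

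First, I would exploit the Drinfeld presentation of $\mathcal{U}_q(\hat{\mathfrak{g}})$ restricted to $\mathcal{U}_q(\hat{\mathfrak{b}})$, which singles out a large commutative subalgebra generated by the $k_i^{\pm 1}$ together with the positive-mode Drinfeld loop generators $\phi_{i,r}^+$ ($i\in I$, $r\ge 0$), packaged into the formal series $\Psi_i(z)=\sum_{r\ge 0}\phi_{i,r}^+ z^r$. On any object $V$ of $\mathcal{O}$ these commuting operators act locally finitely on each (finite-dimensional) weight space, so $V$ decomposes into common generalized eigenspaces indexed by $n$-tuples $\boldsymbol{\Psi}(z)=(\Psi_i(z))_{i\in I}$ of formal power series with constant term in $q^{\mathbb{Z}}$; I call these the $\ell$-weight spaces.

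Second, I would use the cone condition built into the definition of $\mathcal{O}$ to produce in any simple module $L$ a nonzero highest $\ell$-weight vector $v$: pick a maximal weight in the support of $L$, then pick a common eigenvector in that weight space. By maximality, $v$ is annihilated by all positive imaginary root vectors $e_i$ ($i\in I$) and generates $L$ under the negative part of $\mathcal{U}_q(\hat{\mathfrak{b}})$. Standard arguments (unicity of a submodule not containing $v$) show that $L$ is determined up to isomorphism by the pair $\boldsymbol{\Psi}$, and that for any admissible $\boldsymbol{\Psi}$ a Verma-type induced module $M(\boldsymbol{\Psi})$ exists in $\mathcal{O}$ and has a unique simple quotient $L^{\mathfrak{b}}(\boldsymbol{\Psi})$.

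Third, to pin down which $\boldsymbol{\Psi}$ actually arise, I would reduce to rank one by restricting to each Drinfeld $\mathcal{U}_{q_i}(\widehat{\mathfrak{sl}_2})$-subalgebra; if I can prove that for $\mathfrak{g}=\mathfrak{sl}_2$ the highest $\ell$-weight of a simple module in $\mathcal{O}$ is necessarily the expansion at $0$ of a rational fraction regular at $0$, then the general case follows component by component. The rank-one claim is the main technical obstacle: one must rule out transcendental generating series. I would establish it by exploiting the quadratic Drinfeld loop relation between $\phi^+(z)$ and the generators $x^-(w)$, which forces a linear recursion on the sequence $(\psi_{i,r})_{r\ge 0}$ once the module is simple (so that the $x^-$-orbit of the highest weight vector is finitely constrained inside each weight space). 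This recursion is exactly the rationality condition on $\Psi_i(z)$.

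Fourth, for existence I would construct, for each $i\in I$ and $a\in\mathbb{C}^*$, the \emph{prefundamental} representations $L_{i,a}^+$ realising the $n$-tuple $\boldsymbol{\Psi}$ with $\Psi_j(z)=1$ for $j\ne i$ and $\Psi_i(z)=(1-za)$, obtained as a suitable limit or quotient of asymptotic Kirillov--Reshetikhin modules of $\mathcal{U}_q(\hat{\mathfrak{g}})$; together with the fundamental modules $V_i(a)$ (which give the factors $q_i^{\deg}P_i(zq_i^{-1})/P_i(zq_i)$), their tensor products generate highest $\ell$-weights ranging over arbitrary $n$-tuples of rational fractions regular at $0$. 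Invoking the monoidal structure of $\mathcal{O}$ and extracting the simple quotient of each tensor product produces $L^{\mathfrak{b}}(\boldsymbol{\Psi})$ for any such $\boldsymbol{\Psi}$.

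Finally, combining uniqueness from the second step with existence from the fourth, and the rationality constraint from the third, yields the bijection. I expect the hardest point to be the rationality step: verifying that the $\phi^+$-eigenvalues on a highest $\ell$-weight vector must be expansions of rational functions requires a careful analysis of the commutation between $\phi^+(z)$ and $x^-(w)$, and a finiteness argument using the weight space of $v$ minus one simple root rather than any global finite-dimensionality.
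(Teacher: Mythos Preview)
The present paper is a survey and does not prove this theorem; it simply quotes it from \cite{HJ}. Your outline is essentially the strategy of that original source (Hernandez--Jimbo), including the identification of the rationality step as the technical heart and the use of the finite-dimensional weight space one simple root below the top to extract a linear recursion on the $\phi^+$-eigenvalues.

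Two points deserve tightening. In step~2, the $e_i$ ($i\in I$) are real simple-root vectors, not imaginary; the relevant highest-$\ell$-weight condition in the Borel setting is annihilation by the Drinfeld loop generators $x_{i,r}^+$ for $r\ge 0$ that lie in $\mathcal{U}_q(\hat{\mathfrak{b}})$, and the Verma-type module $M(\boldsymbol{\Psi})$ need not itself lie in $\mathcal{O}$ (only its simple quotient does, and only when $\boldsymbol{\Psi}$ is rational). In step~4, the positive prefundamentals $L_{i,a}^+$ together with the $V_i(a)$ cannot reach highest $\ell$-weights of strictly negative degree: both have numerator degree at least denominator degree in each coordinate, and these degrees add under tensor product, so you will never produce, for instance, $(1-za)^{-1}$. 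You also need the negative prefundamentals $L_{i,a}^-$ (constructed in \cite{HJ} by the same limiting procedure from Kirillov--Reshetikhin modules) and the one-dimensional invertibles $[\omega_i]$ for the constant prefactors.
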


\begin{example}\label{expre} Let $i\in I$ and $a\in\mathbb{C}^*$. Set $\Psib_{i,a} = (1 - \delta_{i,j}za)_{j\in I} = (1,\cdots, 1, \underbrace{1 - za}_{\text{position }i}, 1, \cdots, 1)$.

(i) $L_{i,a}^- = L^{\mathfrak{b}}(\Psib_{i,a}^{-1})$ is called a negative prefundamental representation and can be constructed as a limit of simple modules in $\mathcal{C}$ 
(by using a linear inductive system \cite{HJ, Hinv}). The action of $\mathcal{U}_q(\hat{\mathfrak{b}})$ can not be extended to an action of the whole quantum affine algebra (but we will see in Section \ref{shrep} that it
can be extended to an action of a shifted quantum affine algebra). 

(ii) $L_{i,a}^+ = L^{\mathfrak{b}}(\Psib_{i,a})$ is called a positive prefundamental representation and can be obtained from $L_{i,a}^-$ by a certain duality procedure. Together with negative prefundamental representations, these representations play the role of fundamental representations in $\mathcal{O}$ (any simple is a subquotient of a tensor product of prefundamental representations, up to an invertible factor). In the $sl_2$-case, the representation $L_{1,1}^+$ was known as a $q$-oscillator representation 
by Bazhanov-Lukyanov-Zamolodchikov \cite{BLZ}.

(iii) $V_{i,a} = L^{\mathfrak{b}}([\omega_i]\Psib_{i,aq_i^{-1}}\Psib_{i,aq_i}^{-1})$ is a fundamental representation in $\mathcal{C}$. 
Here  $[\omega_i] = (q_i^{\delta_{i,j}})_{j\in I}$ is a multiplicative notation for the fundamental weight $\omega_i$ of $\mathfrak{g}$ (it corresponds to a one dimensional invertible representation in $\mathcal{O}$).
\end{example}

The objects in $\mathcal{O}$ are not necessarily of finite length (the subcategory of finite length objects is not stable by tensor product, for instance $L_{i,a}^+\otimes L_{j,b}^-$ is of infinite length for any $a,b\in\mathbb{C}^*$ and any $i,j\in I$).

\begin{theorem}\cite{FH}\label{gbt} Let $V$ in $\mathcal{C}$. Replace every variable $Y_{i,a}$ appearing in the $q$-character $\chi_q(V)$ with 
$$Y_{i,a}\mapsto  [\omega_i] \frac{[L_{i,aq_i^{-1}}^+]}{[L_{i,aq_i}^+]}.$$
By equating the resulting expression with $[V]$ and clearing the denominators, we obtain a relation in $K(\mathcal{O})$.
\end{theorem}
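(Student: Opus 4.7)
The plan is to phrase the identity as an equality of ring homomorphisms and then reduce to the case of fundamental representations. Let $\iota : K(\mathcal{C}) \hookrightarrow K(\mathcal{O})$ be the natural embedding. The substitution $Y_{i,a} \mapsto [\omega_i][L_{i,aq_i^{-1}}^+]/[L_{i,aq_i}^+]$ extends to a ring homomorphism $\Phi$ from $\mathbb{Z}[Y_{i,a}^{\pm 1}]_{i\in I,\,a\in\mathbb{C}^*}$ into a localization $\widetilde{K(\mathcal{O})}$ of $K(\mathcal{O})$ in which the classes $[L_{i,b}^+]$ are inverted. Since $\chi_q$ is a ring homomorphism and, by Theorem \ref{stgr}, $K(\mathcal{C})$ is polynomial in the fundamental classes $X_{i,a}=[V_{i,a}]$, the two ring maps $\Phi\circ\chi_q$ and $\iota$ (followed by localization) agree on $K(\mathcal{C})$ iff they agree on each generator. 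Thus the problem reduces to proving
$$[V_{i,a}] \;=\; \Phi\bigl(\chi_q(V_{i,a})\bigr) \qquad \text{in } \widetilde{K(\mathcal{O})}$$
for every fundamental representation $V_{i,a}$, and clearing denominators then yields the desired relation in $K(\mathcal{O})$ itself.

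For the reduced statement I would use the transfer-matrix formalism. First, extend the ring morphism $\mathcal{T}(z)$ of (\ref{take}) to $K(\mathcal{O})$ via the twisted trace construction of Remark \ref{trace}: the weight grading of $L_{i,a}^+$ makes $\mathcal{T}_{u,L_{i,a}^+}(z)$ a well-defined formal series in the twist parameters $u_1,\dots,u_n$, and the resulting assignment is multiplicative in the auxiliary space. One then shows that this extension remains injective, by arguing that a simple object of $\mathcal{O}$ is uniquely determined by the joint generalized eigenvalues on a generic quantum space of sufficiently many transfer-matrices $\mathcal{T}_{L_{j,b}^+}(z)$. Next, using the presentation $V_{i,a} = L^{\mathfrak{b}}([\omega_i]\Psib_{i,aq_i^{-1}}\Psib_{i,aq_i}^{-1})$ from Example \ref{expre}(iii) and a direct analysis of the universal $R$-matrix action on $V_{i,a}\otimes L_{i,b}^+$, one establishes an operator-level $TQ$-relation: on any finite-dimensional quantum space, multiplying $\mathcal{T}_{V_{i,a}}(z)$ by the appropriate product of $\mathcal{T}_{L_{i,aq_i}^+}(z)$-operators yields a sum of $Q$-operator products indexed exactly by the monomials of $\chi_q(V_{i,a})$ under the prescribed substitution. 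Injectivity of the extended transfer-matrix morphism then lifts this operator identity to the required identity in $\widetilde{K(\mathcal{O})}$.

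The principal obstacle is the operator-level $TQ$-relation for the fundamental modules: this is precisely the generalized Baxter relation at the heart of the argument, and it demands an explicit computation of the spectrum of the $Q$-operator $\mathcal{T}_{L_{i,b}^+}(z)$ on arbitrary finite-dimensional quantum spaces, together with the verification that its eigenvalues behave as Baxter-like rational functions whose pole/zero structure matches exactly the substitution $Y_{i,a}\mapsto [\omega_i][L_{i,aq_i^{-1}}^+]/[L_{i,aq_i}^+]$. A natural alternative is to decompose $V_{i,a}\otimes L_{i,aq_i}^+$ in $\mathcal{O}$ and identify each of its simple subquotients, via its highest $\ell$-weight, with a prefundamental class shifted as prescribed by a monomial of $\chi_q(V_{i,a})$; this route avoids transfer matrices but still rests on the limit/duality construction of $L_{i,a}^+$ in Example \ref{expre} and on a careful ``fusion'' analysis of positive prefundamentals with finite-dimensional simples, which is the genuine technical core in either approach.
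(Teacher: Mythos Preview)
This survey does not itself contain a proof of Theorem~\ref{gbt}; the result is quoted from \cite{FH}, so there is no in-paper argument to compare against directly. I can only compare your proposal to the argument in the cited source.

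Your reduction to fundamental representations via the polynomiality of $K(\mathcal{C})$ (Theorem~\ref{stgr}) is sound and is indeed the opening move in \cite{FH}. For the fundamental case, however, the approach there differs from both routes you sketch. Rather than working with transfer matrices on quantum spaces or attempting a direct Jordan--H\"older analysis of $V_{i,a}\otimes L_{i,aq_i}^+$, the proof in \cite{FH} extends the $q$-character morphism $\chi_q$ to an injective ring homomorphism from (the fraction field of) $K(\mathcal{O})$ into a suitable commutative target and verifies the identity there. The decisive input, from \cite{HJ}, is that every $\ell$-weight of $L_{i,a}^+$ is $\Psib_{i,a}$ times an ordinary weight, so that $\chi_q(L_{i,a}^+)=[\Psib_{i,a}]\cdot\chi(L_{i,a}^+)$ with $\chi$ the ordinary character. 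Since $\chi(L_{i,a}^+)$ does not depend on $a$, the ratio $[L_{i,aq_i^{-1}}^+]/[L_{i,aq_i}^+]$ becomes, at the level of $q$-characters, simply $[\Psib_{i,aq_i^{-1}}][\Psib_{i,aq_i}]^{-1}$; the substitution $Y_{i,a}\mapsto[\omega_i][\Psib_{i,aq_i^{-1}}][\Psib_{i,aq_i}]^{-1}$ then matches the very definition of the $\ell$-weight monomials of $\chi_q(V)$ (cf.\ Example~\ref{expre}(iii)), and the relation follows tautologically.

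Your transfer-matrix route is not wrong in spirit, but it shifts the burden to proving injectivity of the extended twisted transfer-matrix map on $K(\mathcal{O})$, which is not supplied anywhere in this survey and is strictly harder than the $q$-character injectivity actually used. Your alternative route, decomposing $V_{i,a}\otimes L_{i,aq_i}^+$ directly, is essentially a restatement of the theorem rather than a strategy to prove it. What your proposal does not identify is the structural fact about $\chi_q(L_{i,a}^+)$ above, which is precisely what collapses the problem.
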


\begin{example} For $\mathfrak{g} = \mathfrak{sl}_2$, we obtain the original Baxter $TQ$-relation
\begin{equation}\label{tqr}[V_1(a)][L_{1,aq}^+] = [\omega] [L_{1,aq^{-1}}^+]  + [-\omega][L_{1,aq^3}^+],\end{equation}
which corresponds to the $q$-character in Example (\ref{exqc}) (the relation is known in this case by \cite{BLZ}). 
Hence the relations in Theorem \ref{gbt} are the generalized Baxter $TQ$-relations. These relations imply in 
particular that $V_1(a)\otimes L_{1,aq}^+$ has length $2$.
\end{example}

\begin{remark} Theorem \ref{gbt} implies Theorem \ref{bthm}. Indeed, as the left tensor factors of the universal $R$-matrix $\mathcal{R}(z)$ belong to 
$\mathcal{U}_q(\hat{\mathfrak{b}})$, the (twisted) transfer-matrices $\mathcal{T}_{V,u}(z)$ are well-defined for objects $V$ in $\mathcal{O}$. In particular, we have the $Q$-operator $\mathcal{Q}_i(z) = \mathcal{T}_{L_{i,1}^+,u}(z)$ associated to the prefundamental representation $L_{i,1}^+$. It is then proved that the action of a $Q$-operator on the quantum space $W[[z]]$ is polynomial up to a scalar factor 
(note that the $Q$-operator used in \cite{FH} involves a representation $R_{i,1}^+$ dual to $L_{i,1}^+$, but the statements hold for both definitions).  Together with the generalized Baxter $TQ$-relations, this gives the result (see \cite{FH}).
\end{remark}

\section{ODE/IM and Bethe Ansatz}

The ODE/IM correspondence (Ordinary Differential Equations/Integrable models) was discovered at the end of the 90's (Dorey-Tateo, Bazhanov-Lukyanov-Zamolodchikov) and gives a surprising relation between functions associated to Schr\"odinger differential operators and the 
spectrum of quantum systems called "quantum KdV". 
Feigin-Frenkel \cite{FF} have proposed a large generalization and an interpretation of this correspondence in terms of Langlands duality. The Schr\"odinger operators are generalized to affine opers (without monodromy), associated to the Langlands dual affine Lie algebra of the affine Lie algebra which is attached to the quantum KdV system.
This conjecture is largely open, but it is a fruitful source. 
In particular, a remarkable system of relations (the $QQ$-system) was observed \cite{mrv} to be satisfied
by spectral determinants of certain solutions of affine opers (their additive analogues first appeared in \cite{mv}). 
The $QQ$-system of \cite{mrv} for two families of functions $(Q_i(z))_{i\in I}$, $(\tilde{Q}_i(z))_{i\in I}$ is
$$ Q_i(zq_i^{-1})\tilde{Q}_i(zq_i) - Q_i(zq_i)\tilde{Q}_i(zq_i^{-1}) = K \prod_{j|C_{i,j < 0}} Q_j(zq^{C_{i,j}+1})Q_j(zq^{C_{i,j} + 3})\cdots Q_j(zq^{-C_{i,j} - 1}).$$
Depending on the context, $K$ might be a constant or a polynomial in $z$. Motivated by the ODE/IM correspondence, we obtained the following.

\begin{theorem}\label{qqo}\cite{FH2} There is a family of simple classes $[\tilde{L}_{i,a}]$ ($i\in I, a\in\mathbb{C}^*$) in the Grothendieck ring $K(\mathcal{O})$ such that $Q_i(z) = [L_{i,z}^+]$, $\tilde{Q}_i(z) = [\tilde{L}_{i,z}]$ 
satisfy the $QQ$-system in $K(\mathcal{O})$ (with $K$ constant).
\end{theorem}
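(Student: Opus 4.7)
The plan is to construct the $\tilde L_{i,a}$ as specific simple objects in $\mathcal{O}$ identified by their highest $\ell$-weights, and then to derive the $QQ$-identity from a short exact sequence in $\mathcal{O}$ whose image in $K(\mathcal{O})$ gives the desired relation.

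Using the classification of simples in $\mathcal{O}$ by tuples $\Psib = (\Psib_i(z))_{i\in I}$ of rational fractions regular at $0$, I would set $\tilde L_{i,a} := L^{\mathfrak{b}}(\tilde\Psib_{i,a})$, where $\tilde\Psib_{i,a}$ is the unique tuple such that the highest $\ell$-weights of $L^+_{i,aq_i^{-1}} \otimes \tilde L_{i,aq_i}$ and of $L^+_{i,aq_i} \otimes \tilde L_{i,aq_i^{-1}}$ coincide up to an invertible one-dimensional factor and equal the leading monomial of the product on the right-hand side of the $QQ$-relation. This matching pins $\tilde\Psib_{i,a}$ down uniquely once one normalizes so that the Cartan weight of its highest vector is the simple Weyl reflection $s_i\cdot\omega_i = \omega_i - \alpha_i$ of $\omega_i$; heuristically, $\tilde L_{i,a}$ should be thought of as an ``$i$-th Weyl-conjugate'' of $L^+_{i,a}$ inside $\mathcal{O}$.

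Next, for each $i\in I$ and $a\in\mathbb{C}^*$, I would establish a short exact sequence in $\mathcal{O}$ of the form
$$
0 \to [\chi] \otimes \bigotimes_{j:\, C_{i,j}<0}\, \bigotimes_{k=0}^{-C_{i,j}-1} L^+_{j,\, a q^{C_{i,j}+2k+1}} \longrightarrow L^+_{i, aq_i^{-1}} \otimes \tilde L_{i, aq_i} \longrightarrow L^+_{i, aq_i} \otimes \tilde L_{i, aq_i^{-1}} \to 0,
$$
where $[\chi]$ is an invertible one-dimensional object carrying the constant $K$. The strategy follows the template of the generalized Baxter $TQ$-relation (Theorem \ref{gbt}): one extends the $q$-character homomorphism from $\mathcal{C}$ to $\mathcal{O}$ to enumerate $\ell$-weight multiplicities; one realizes the non-trivial submodule on the left as the image of an intertwiner obtained as a limit of standard Kirillov-Reshetikhin intertwiners, using the inductive realization of positive prefundamentals of Example \ref{expre}; and one bounds the lengths of the two tensor products above by cone/positivity arguments for $q$-characters in $\mathcal{O}$. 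Passing to $K(\mathcal{O})$ then yields the $QQ$-system with $K = [\chi]$ indeed a constant.

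The main obstacle is the length-two claim of the previous step: showing that the two tensor products on the left of the exact sequence admit no additional composition factors beyond those appearing. This is delicate because the $q$-character of $\tilde L_{i,a}$ is not given by a known closed formula outside simply-laced types. A natural route is to handle $\mathfrak{g} = \mathfrak{sl}_2$ first, where the right-hand product is empty and the identity collapses to a Wronskian-type relation among positive prefundamentals and their Weyl conjugates, and then to propagate to higher rank by restriction to the rank-one subalgebras $\mathcal{U}_{q_i}(\widehat{\mathfrak{sl}_2}) \hookrightarrow \mathcal{U}_q(\hat{\mathfrak{b}})$ attached to the simple nodes $i\in I$, controlling $\ell$-weights under restriction with the $q$-character machinery.
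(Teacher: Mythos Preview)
There is a genuine gap: your short exact sequence cannot exist as stated, because the tensor products involved are not of finite length. Already for $\mathfrak{g}=\mathfrak{sl}_2$ one has $\tilde L_{1,a}=L_{1,a}^-$ (the negative prefundamental), and the paper recalls explicitly that $L_{i,a}^+\otimes L_{j,b}^-$ is of infinite length for all $a,b,i,j$. So the ``length-two'' claim you flag as the main obstacle is not merely delicate; it is false. Relatedly, the constant $K$ is not the class of an invertible one-dimensional object: in the $\mathfrak{sl}_2$ Wronskian the right-hand side is $\chi=(1-[-\omega_1]^2)^{-1}=\sum_{k\ge 0}[-\omega_1]^{2k}$, an infinite formal sum living only in a completion of $K(\mathcal{O})$. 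Hence the $QQ$-identity is a relation in a topological Grothendieck ring and cannot be obtained from a short exact sequence of finite-length modules with a one-dimensional kernel.

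The present paper is a survey and does not reproduce the proof; it only records the explicit highest $\ell$-weight
\[
\tilde{\Psib}_{i,a}=\Psib_{i,a}^{-1}\prod_{j:C_{i,j}=-1}\Psib_{j,aq_i}\;\prod_{j:C_{i,j}=-2}\Psib_{j,a}\Psib_{j,aq^2}\;\prod_{j:C_{i,j}=-3}\Psib_{j,aq^{-1}}\Psib_{j,aq}\Psib_{j,aq^3}
\]
(so your heuristic ``$s_i$-conjugate of $L^+_{i,a}$'' is on the right track, but pinned down by this formula rather than by your matching condition). The argument in \cite{FH2} is a $q$-character computation: one works in the completed target of the $q$-character map for $\mathcal{O}$, writes down $\chi_q(\tilde L_{i,a})$, and checks the identity of formal series directly, with the infinite sum $\chi$ arising as a geometric series in the weight lattice variables. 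If you want a structural statement, what one can actually prove is an equality of $q$-characters (hence of classes in the completed $K(\mathcal{O})$), not an exact sequence; any categorical lift would have to take place in a setting where infinite-length objects and their formal classes are handled, not via the finite-length template you propose.
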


We have $\tilde{L}_{i,a} =  L^{\mathfrak{b}}(\tilde{\Psib}_{i,aq_i^{-2}})$ (up to a twist by an invertible representation) where 
\begin{equation}\label{tps}\tilde{\Psib}_{i,a} = \Psib_{i,a}^{-1}\left(\prod_{j,C_{i,j} = -1}\Psib_{j,aq_i} \right)\left(\prod_{j,C_{i,j} = -2}\Psib_{j,a}\Psib_{j,aq^2} \right)\left(\prod_{j,C_{i,j} = -3}\Psib_{j,aq^{-1}}\Psib_{j,aq}\Psib_{j,aq^3} \right).\end{equation}
The $QQ$-systems\footnote{The $QQ$-systems for twisted quantum affine algebras have been established in \cite{W}.} imply the Bethe Ansatz equations, which are crucial informations on the root of the Baxter's polynomials, as conjectured 
by various authors (see \cite{Fre, H}): recall $Q_{i,k}(z)$ as in Theorem \ref{bthm} and consider $w$ a (generic) root of $Q_{i,k}$, then 
$$v_i^{-1} \prod_{j \in I}
\frac{{Q}_{j,k}(wq^{B_{i,j}})}{{ Q}_{j,k}(wq^{-B_{i,j}})} = -1,$$
where $v_i = \prod_{j\in I}u_j^{C_{i,j}}$ and $B$ is the symmetrized Cartan matrix.

\begin{example} In the $sl_2$-case, the $QQ$-system is nothing else than the quantum Wronskian relation
$$[L_{1,a}^+][L_{1,a}^-] - [-\omega_1]^2[L_{1,aq^2}^+][L_{1,aq^{-2}}^-] = \chi\text{ where }\chi = (1 - [-\omega_1]^2)^{-1}.$$
The Bethe Ansatz equation in this case reads
$$\frac{{Q}_{1,k}(wq^2)}{{ Q}_{1,k}(wq^{-2})} = - v_1.$$
\end{example}

\begin{example} In the dual context of opers, the $QQ$-system may be expressed with polynomial solutions and an additional polynomial factor $K$ on the 
left hand-side (see for instance \cite[Section 5.2]{FKSZ}). This can also be realized in the present context after renormalization. For example, 
let $\mathfrak{g} = \mathfrak{sl}_2$ and $W = V(q^{-1})$. Let $f_+(z)$ (resp. $f_-(z)$) be the eigenvalue of 
$\mathcal{T}_{u,L_{1,1}^+}(z)$ (resp. $\mathcal{T}_{u,L_{1,1}^-}(z)$) on a highest (resp. lowest) weight vector of $W$ at the limit $v_i = 0$. Then 
$\mathcal{T}_{u,L_{1,1}^\pm}(z) = f_\pm(z) Q_\pm(z)$ with $Q_\pm(z)$ operator which depends on $z$ and $u$ which is a polynomial in $z$ 
(this is the result of \cite{FH} for $Q_+$ and of \cite{Z} for $Q_-$). Then the $QQ$-system implies 
$$\mathcal{T}_{u,L_{1,1}^+}(z)\mathcal{T}_{u,L_{1,1}^-}(z) - u \mathcal{T}_{u,L_{1,q^2}^+}(z)\mathcal{T}_{u,L_{1,q^{-2}}^-}(z) = (1 - u) ^{-1}.$$
Evaluating on the lowest weight vectors, one gets $f_+(z)(1-z) f_-(z) = 1$ and $f_+(zq^{-1})f_+(zq) = (1 - zq^{-1})^{-1}$ 
$ f_-(z) = f_+(zq^{2})$. Hence
$$Q_+(z)Q_-(z) - u  Q_+(zq^2)Q_-(zq^{-2}) = (1 - u)^{-1} (1 - z).$$
\end{example}

\begin{remark}\label{addqq} The genericity condition in the Bethe Ansatz equation was dropped in \cite{FJMM} by using the $QQ^*$-systems which emerge from the 
cluster \cite{HL2} (see the next Section):
$$[L_{i,a}^*][L_{i,a}^+] = \prod_{j,C_{i,j} < 0}[L_{j,aq^{-B_{j,i}}}^+] + [-\alpha_i]\prod_{j,C_{i,j}<0}[L_{j,aq^{B_{i,j}}}^+]\text{ for $i\in I$, $a\in\mathbb{C}^*$,}$$
with $[-\alpha_i] = \prod_{j\in I}[\omega_j]^{-C_{j,i}}$ of dimension $1$ and $L_{i,a}^*$ is the simple module in the category $\mathcal{O}$ associated
 to $\mbox{\boldmath$\Psi$}_{i,a}^{-1}\prod_{j,C_{i,j} < 0}\mbox{\boldmath$\Psi$}_{j,aq^{-B_{j,i}}}$. \end{remark}

\begin{example} In the $sl_2$-case, the $QQ^*$-system and the $TQ$-relation (\ref{tqr}) match, but they differ in general.
\end{example}

This a point where two important fields connected to quantum affine algebras, namely cluster algebras and quantum integrable models, collide. 
The $QQ$-relations themselves will play a role in this connection.

\section{Cluster structures}

The theory of cluster algebras was introduced by Fomin-Zelevinsky \cite{FZ}. A cluster algebra is a commutative algebra 
with a distinguished set of generators grouped into overlapping subsets (the clusters). Each element of the cluster algebra can be expressed as a rational fraction in the elements of a given cluster. More precisely, the cluster algebra $\mathcal{A}_Q$ is a commutative algebra associated with a quiver $Q$ (without loops or 2-cycles). It has a distinguished set of generators (the cluster variables) defined by a combinatorial process.
The cluster algebra $\mathcal{A}_Q$ attached to the quiver
$Q$ (with set of vertices $Q_0$) is a subring of the field 
$$\mathcal{F} = \mathbb{Q}(X_i)_{i\in Q_0}$$
with free variables $X_i$ which are called the initial cluster variables (together with the initial quiver $Q$, the initial cluster variables form the initial cluster). The cluster algebra 
$\mathcal{A}_Q$ is defined as the subalgebra of $\mathcal{F}$ generated by the cluster variables, 
obtained inductively from the initial cluster variables by an inductive process called mutations. 
For example, the first step mutated variables $X_i^*$ are defined by the formula: 
$$X_i X_i^* = \prod_{j\rightarrow i}X_j  + \prod_{j\leftarrow i}X_j,$$
were the arrows $\rightarrow$ and $\leftarrow$ are arrows in the initial quiver $Q$ (the quiver gets also mutated in the process). 
The number of cluster variables is not necessarily finite.
In addition, the cluster variables are grouped into overlapping subsets called clusters, which are all in bijection with $Q_0$. 
The cluster monomials are defined as the monomials into the cluster variables of the same cluster. In some situations, one 
may have additional non-mutable cluster variables: they are called frozen variables and they belong to all clusters (the number 
of clusters does not depend on the frozen cluster variables).

One of the fundamental properties of cluster algebras is the Laurent phenomenon: any cluster variable can be expressed as a Laurent polynomial
in the cluster variables of a given seed. For the initial seed, this can be written as $\mathcal{A}_Q\subset \mathbb{Z}[X_i^{\pm 1}]_{i\in Q_0}$.

\begin{example}\label{ex7} Consider a quiver $Q$ of type $A_2$ $\bullet \longrightarrow \bullet$. We have the initial cluster variables $(X_1,X_2)$, and five cluster variables $X_1 , X_2, \frac{1 + X_2}{X_1}, \frac{1 + X_1}{X_2}  
, \frac{1 + X_1 + X_2}{X_1X_2}$.
There are also five clusters: 
$$(X_1,X_2), \left(\frac{1 + X_2}{X_1}, X_2\right), 
\left(\frac{1 + X_2}{X_1}, \frac{1 + X_1 + X_2}{X_1X_2}\right),\left(\frac{1 + X_1}{X_2},\frac{1 + X_1 + X_2}{X_1X_2}\right),  \left(\frac{1 + X_1}{X_2}, X_1\right).$$
\end{example}

\begin{definition}\cite{HL1} A monoidal category $\mathcal{M}$ is a {\it monoidal categorification} of a cluster algebra $\mathcal{A}$ if there is
$$\phi : \mathcal{A}\rightarrow K(\mathcal{M})$$
a ring isomorphism so that the cluster monomials are sent to simple classes.
\end{definition}

 In the definition above, the simple modules corresponding to 
cluster monomials are called reachable modules. They have the property to be real (the tensor square is also simple), as the square of a cluster monomial is a cluster monomial.

A monoidal categorification can be seen as a cluster symmetry of the Grothendieck ring, see \cite{H}.
It gives useful informations on the monoidal category: for example, it points out remarkable simple representations (corresponding to cluster variables) and it gives a tensor factorization of simple representations corresponding to cluster monomials.
The original examples \cite{HL1} were obtained from the finite-dimensional representations of quantum affine algebras (see references and reviews in \cite{Kel, HLr, H}). 
These are motivated by the $T$-system which are remarkable relations between simple classes in $K(\mathcal{C})$ established in \cite{N, HCr} (and which imply the proof 
of the Kirillov-Reshetikhin conjecture on explicit character formulas for certain simple representations in $\mathcal{C}$). 
These $T$-systems can then be interpreted as exchange relations from the point of view of monoidal categorification. Another motivation is the explicit tensor factorization of 
simple representations for $\mathfrak{g} = \mathfrak{sl}_2$ by Chari-Pressley (see \cite{CP2}). A crucial point is the interpretation 
of $q$-characters in terms of purely cluster theoretical data called $F$-polynomials (see \cite{HL1} and \cite{C} for other examples).

\begin{example} For example for $\mathfrak{g} = \mathfrak{sl}_2$, one has the $T$-system
$$[V_1(a)][V_1(aq^2)] = 1 + [W]$$
where $W$ is a simple evaluation module of dimension $3$. This has the form of an exchange relation.
\end{example}

\begin{example} Let $\mathfrak{g} = \mathfrak{sl}_3$. We define $\mathcal{M}$ as the 
monoidal Serre subcategory of  the category $\mathcal{C}$ generated by the four fundamental representations 
$V_1(1), V_1(q^2), V_2(q), V_2(q^3)$. Then we obtain a monoidal categorification $K(\mathcal{M})\simeq \mathcal{A}_Q$ with an initial seed
$$\begin{xymatrix}{[W_1]\ar[r]& [V_1(1)] \ar[r]
& [V_2(q)]  \ar@{->}@/^2pc/[ll] & [W_2]\ar[l]}\end{xymatrix}$$

\vspace{5mm}

We have written at each vertex the simple class corresponding to the initial cluster variable attached to this vertex. 
Here $W_1$ and $W_2$ are classes of $8$-dimensional evaluation representations in $\mathcal{M}$ and they correspond to frozen cluster variables (see Example \ref{evaljg}). The total number of cluster variables is $7$, as in the example \ref{ex7}.
As an application, we obtain that every simple representation in $\mathcal{M}$ can be factorized into tensor products of these $7$ representations. 
Note that there are infinitely many simple classes in this category $\mathcal{M}$.
\end{example}

\begin{example} Let $\mathfrak{g} = \mathfrak{sl}_2$. Let $K(\mathcal{C}_{\mathbb{Z}}^-)$ (resp. $K(\mathcal{C}_{\mathbb{Z}})$) be the subring of $K(\mathcal{C})$ generated by the $[V_1(q^{2r})]$ with $r\leq 0$ (resp. $r\in\mathbb{Z}$). This defines monoidal categories $\mathcal{C}_{\mathbb{Z}}^-$ and $\mathcal{C}_{\mathbb{Z}}$ of representations whose class in $K(\mathcal{C})$ belongs 
to the respective subrings $K(\mathcal{C}_{\mathbb{Z}}^-)$ and $K(\mathcal{C}_{\mathbb{Z}})$. Then $\mathcal{C}^-_\mathbb{Z}$ 
is a monoidal categorification with the initial seed:
$$[1 - z]\leftarrow [(1 - z)(1-zq^{-2})]\leftarrow [(1 - z)(1 - zq^{-2})(1 - zq^{-4})]\leftarrow \cdots$$
where $[P(z)]$ is the class of the simple module of Drinfeld polynomial $P(z)$. 

The monoidal category $\mathcal{C}_\mathbb{Z}$ is a monoidal categorification with the initial seed:
$$[1 - z]\leftarrow [(1 - z)(1-zq^{-2})]\rightarrow [(1 - zq^2)(1 - z)(1-zq^{-2})]\leftarrow \cdots$$
\end{example}

\begin{example}
For $\mathfrak{g} = \mathfrak{sl}_3$, let $K(\mathcal{C}_{\mathbb{Z}}^-)$ be the subring of $K(\mathcal{C})$ generated by the $[V_1(q^{2r})]$, $[V_2(q^{2r-1})]$ with $r\leq 0$. 
As above this defines a monoidal category $\mathcal{C}_{\mathbb{Z}}^-$ which is a monoidal categorification with the initial seed:
$$\xymatrix{  & \left[W_1^{(2)}\right] \ar[rd]  & &\left[ W_2^{(2)}\right]\ar[ll]\ar[rd] &  & \ar[ll]\cdots
\\ \left[W_1^{(1)}\right]\ar[ru]&&\left[ W_2^{(1)} \right]\ar[ll]\ar[ru]&& \left[ W_3^{(1)} \right]\ar[ru]\ar[ll]&\ar[l]\cdots
}$$
where $W_r^{(1)}$ (resp. $W_r^{(2)}$) is the simple module of Drinfeld polynomials $((1-z)(1-zq^{-2})\cdots (1-zq^{2r-2}),1)$ 
(resp. $(1,(1-zq^{-1})(1-zq^{-3})\cdots (1-zq^{2r-3}))$).
\end{example}

Statements for finite-dimensional representations in general type will be given below in a more general context (see Theorem \ref{mth}). 

There are many interesting examples of monoidal categorifications (the reader may refer to references in \cite{KICM, H}). Here is a new example.

\begin{example}\label{exso} Consider the category $\hat{\mathcal{O}}$ of modules in $\mathcal{O}$ whose action can be extended to the whole quantum affine algebra. Let $\mathfrak{g} = \mathfrak{sl}_2$ and $K(\hat{\mathcal{O}}_{\mathbb{Z}}^+)$ be the subring of $K(\mathcal{O})$ generated by the $[L^{\mathfrak{b}}(q^{-1}\frac{1 - q^{2r}z}{1 - q^{2r-2}z})]$ with $r > 0$ . This defines a monoidal category $\hat{\mathcal{O}}_{\mathbb{Z}}^+$. This is a monoidal categorification with the initial seed
$$\left[L^{\mathfrak{b}}\left(q^{-1}\frac{1-q^2z}{1-z}\right)\right]\leftarrow \left[L^{\mathfrak{b}}\left(q^{-2}\frac{1-q^4z}{1-z}\right)\right]\leftarrow \left[L^{\mathfrak{b}}\left(q^{-3}\frac{1-q^6z}{1-z}\right)\right]\leftarrow \cdots$$
These are evaluations of simple Verma modules (see Example \ref{evalj}). The first step mutations are for $r\geq 1$: 
$$\left[L^{\mathfrak{b}}\left(q^{-r}\frac{1 - q^{2r}z}{ 1 - z} \right)\right] \left[L^{\mathfrak{b}}\left(q\frac{1 - zq^{2r - 2}}{ 1 - zq^{2r}} \right)\right] 
= \left[L^{\mathfrak{b}}\left(q^{-r-1}\frac{1 - q^{2r+2}z}{ 1 - z} \right)\right]  + \left[L^{\mathfrak{b}}\left(q^{-r+1}\frac{1 - q^{2r - 2}z}{ 1 - z} \right)\right] .$$
In this lecture, we conjecture that such a statement holds for general types. 
\end{example}

There are other known interesting subcategories of $\mathcal{O}$ whose Grothendieck ring have a cluster algebra structure, 
see \cite{HL2, BC} for instance (in particular categories $\mathcal{O}^+_\mathbb{Z}$, $\mathcal{O}^-_{\mathbb{Z}}$ which satisfy a certain duality, see \cite{HL2, P}). 
The $QQ^*$-relations and the $QQ$-systems will also be seen as exchange relations in monoidal categorifications (Section \ref{sshcl}). Although it is not expected that the whole category 
$\mathcal{O}$ itself provides a monoidal categorification of a cluster algebra (because the tensor product is not compatible with 
the finite length property as discussed above), a closely related category will be monoidal categorification (a category defined in terms of shifted quantum affine algebras, see Section \ref{sshcl}). 

\section{Quantum Grothendieck rings}

Let $t$ be a new indeterminate.
The Grothendieck ring $K(\mathcal{C})$ has a natural non-commutative deformation, the quantum Grothendieck ring $(K_t(\mathcal{C}),*_t)$. The quantum Grothendieck ring is defined as a $\mathbb{Z}[t^{\pm 1/2}]$-subalgebra of a quantum torus, with a distinguished 
set of generators called fundamental quantum classes $[V_i(a)]_t$ ($i\in I$, $a\in\mathbb{C}^*$). There is a canonical 
surjection $\pi : K_t(\mathcal{C})\rightarrow K(\mathcal{C})$ such that $\pi(t) = 1$ and $\pi([V_i(a)]_t) = [V_i(a)]$ for any $i\in I$, $a\in\mathbb{C}^*$.

$K_t(\mathcal{C})$ was originally defined in simply-laced types \cite{N0, VV:qGro} from the point of view of geometric representation theory (categories of perverse sheaves on quiver varieties). It has also a purely algebraic construction that leads to its existence for all types \cite{H1}, as well as a categorical construction (the monoidal Jantzen filtrations \cite{FuH}) and a close connection with cluster algebras that will be discussed below.

\begin{example} In the $sl_2$-case, the subring $K_t(\mathcal{C}_{\mathbb{Z}})$ generated by the quantum classes $[V_1(q^{2r})]_{r\in\mathbb{Z}}$ is enough to describe the whole ring $K_t(\mathcal{C})$ and its ring structure is uniquely defined by
\begin{equation}\label{deux}[V(q^{r})]_t*[V(q^{r'})]_t = t^{\mathcal{N}(r'-r)}[V(q^{r'})]_t*[V(q^r)]_t +\delta_{r'-r,2}(1 - t^{-2})\text{ for $r' \geq r$,}\end{equation}
where  $\mathcal{N}(r) = \begin{cases}2(-1)^k\text{ if $r = 2 k > 0$} \\0\text{ otherwise.}\end{cases}$. 
\end{example}

Let $\mathfrak{n}$ be the "positive part" of the Lie algebra $\mathfrak{g}$, and $N$ be a unipotent Lie group with $\text{Lie}(N) = \mathfrak{n}$.

\begin{theorem}\cite{HLCr} For the simply-laced types, there is a natural monoidal subcategory $\mathcal{C}_Q$ so that the subalgebra $K_t(\mathcal{C}_Q)\subset K_t(\mathcal{C})$ generated by the fundamental classes $[V]_t$ with $V$ in $\mathcal{C}_Q$ satisfies  
$\mathbb{C}\otimes_\mathbb{Z} K_t(\mathcal{C}_Q)\simeq \mathcal{U}_t(\mathfrak{n})$. 
\end{theorem}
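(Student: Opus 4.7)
The plan is to construct $\mathcal{C}_Q$ from a Dynkin quiver orientation $Q$ of the simply-laced Dynkin diagram of $\mathfrak{g}$, and then realize the isomorphism through matching quantum torus presentations. First I would fix a height function $\xi : I \to \mathbb{Z}$ adapted to $Q$, meaning $\xi(j) = \xi(i) - 1$ for every arrow $i \to j$ of $Q$. Using the classical Auslander-Reiten bijection between positive roots $R^+$ of $\mathfrak{g}$ and the indecomposable finite-dimensional $\mathbb{C}Q$-modules, each positive root $\beta$ gets assigned a pair $(i_\beta, s_\beta) \in I \times \mathbb{Z}$. I would then define $\mathcal{C}_Q$ as the smallest full abelian monoidal subcategory of $\mathcal{C}$ stable under subquotients and containing the fundamental representations $V_{i_\beta}(q^{s_\beta})$ for all $\beta \in R^+$. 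By construction, these $|R^+|$ fundamental representations are in natural bijection with the positive roots.

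Next I would exploit the ambient quantum torus. By construction, $K_t(\mathcal{C})$ sits inside a quantum torus $\mathcal{Y}_t$ generated by $Y_{i,a}^{\pm 1}$ with $t$-commutation determined by the inverse quantum Cartan matrix. The subring $K_t(\mathcal{C}_Q)$ then embeds into the smaller quantum torus $\mathcal{Y}_t(Q)$ generated only by the $Y_{i_\beta, q^{s_\beta}}^{\pm 1}$, which has rank $|R^+|$. On the other hand, the De Concini-Kac-Procesi quantum nilpotent construction attached to the reduced expression of the longest element $\mathbf{w}_0 \in W$ adapted to $Q$ embeds $\mathcal{U}_t(\mathfrak{n})$ into a quantum torus of the same rank $|R^+|$, generated by the dual PBW root vectors $F^*(\beta)$, $\beta \in R^+$. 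I would then define the candidate morphism $\Phi : \mathbb{C} \otimes_\mathbb{Z} K_t(\mathcal{C}_Q) \to \mathcal{U}_t(\mathfrak{n})$ by sending each fundamental quantum class $[V_{i_\beta}(q^{s_\beta})]_t$ to $F^*(\beta)$.

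The hard part is verifying that $\Phi$ is a well-defined algebra isomorphism. For well-definedness, I would check that the $t$-commutation relations among the $F^*(\beta)$'s (computable via the Levendorskii-Soibelman formulas for the convex order on $R^+$ coming from $Q$) exactly match the $t$-commutations of the fundamental quantum classes inside $\mathcal{Y}_t(Q)$; the latter are determined by the pairing $\mathcal{N}$ illustrated in relation (\ref{deux}) and its higher-rank generalization. Moreover, the quantum Serre relations in $\mathcal{U}_t(\mathfrak{n})$ must lift to quantum $T$-system identities among the fundamental quantum classes. Surjectivity is then automatic since the $F^*(\beta)$ generate $\mathcal{U}_t(\mathfrak{n})$, and injectivity follows from a graded dimension count, noting that both sides admit a natural basis indexed by $\mathbb{Z}_{\geq 0}^{R^+}$ (simple quantum classes, respectively the dual canonical basis) with matching weight multiplicities under the root-lattice grading. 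The genuine obstacle is the explicit matching of these $t$-commutations and quantum Serre identities; one effective route is to invoke Nakajima's geometric construction of $(q,t)$-characters via perverse sheaves on graded quiver varieties, together with the geometric realization of $\mathcal{U}_t(\mathfrak{n})$ in the same framework, so that both sides can be read off from a common category of perverse sheaves and matched term by term.
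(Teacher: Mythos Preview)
The survey paper does not prove this theorem; it merely cites \cite{HLCr} and illustrates the statement with the $\mathfrak{sl}_3$ example immediately afterwards. So there is no in-paper proof to compare against, only the original argument of \cite{HLCr}.

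Your construction of $\mathcal{C}_Q$ via a height function on a Dynkin orientation and the Auslander--Reiten bijection is exactly the definition used in \cite{HLCr}, and the quantum-torus embeddings you describe are also central there. The genuine weak point in your plan is the \emph{direction} of the map. Defining $\Phi : K_t(\mathcal{C}_Q) \to \mathcal{U}_t(\mathfrak{n})$ on the fundamental classes presupposes a presentation of $K_t(\mathcal{C}_Q)$ by generators and relations, which is not available a priori; saying ``check that the $t$-commutations match'' only yields an isomorphism of the ambient quantum tori, not of the specific subalgebras, since the $[V_{i_\beta}(q^{s_\beta})]_t$ are genuine Laurent \emph{polynomials} in the torus (their $(q,t)$-characters), not monomials. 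The argument in \cite{HLCr} runs the other way: one sends the Chevalley generators $E_i \in \mathcal{U}_t(\mathfrak{n})$ to the fundamental classes $[V_i(q^{\xi(i)})]_t$ attached to the \emph{simple} roots, and verifies by explicit $(q,t)$-character computation that these satisfy the quantum Serre relations (this is the computational core, and is what the $\mathfrak{sl}_3$ example in the paper is illustrating). This gives a well-defined homomorphism $\mathcal{U}_t(\mathfrak{n}) \to K_t(\mathcal{C}_Q)$; surjectivity follows because repeated $t$-commutators of the $E_i$-images produce all the remaining fundamental classes (the $t$-deformed $T$-system), and injectivity from the graded dimension / PBW count you already indicated. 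Your final suggestion to fall back on Nakajima's perverse-sheaf realization is reasonable, but in \cite{HLCr} the Serre relations are in fact checked directly, with the geometry entering only to identify bases afterward.
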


\begin{remark} The result in \cite{HLCr} (with Theorem \ref{klsl} below) also implies that the monoidal category $\mathcal{C}_Q$ is a categorification of $\mathbb{C}[N]$, together with its dual canonical basis in the sense of Lusztig \cite{L} (the basis of simple classes in $K(\mathcal{C}_Q)$ matches the dual canonical basis though an isomorphism $\mathbb{C}\otimes_{\mathbb{Z}} K(\mathcal{C}_Q)\simeq \mathbb{C}[N]$).
\end{remark}

\begin{example} Let $\mathfrak{g} = \mathfrak{sl}_3$. The algebra $K_t(\mathcal{C}_Q)$ is generated by $[V_1(1)]_t$, $[V_1(q^2)]_t$ and $[V_2(q)]_t$. We have 
$$[V_1(1)]_t * [V_1(q^2)]_t - t^{-1} [V_1(q^2)]_t * [V_1(1)]_t = (t^{1/2} - t^{-3/2}) [V_2(q)]_t$$
which implies that $E_1 = [V_1(1)]_t$ and $E_2 = [V_1(q^2)]_t$ generate the algebra and satisfy the quantum Serre relations 
$$[V_1(1)]_t * [V_1(1)]_t * [V_1(q^2)]_t - (t + t^{-1}) [V_1(1)]_t * [V_1(q^2)]_t * [V_1(1)]_t  + [V_1(q^2)]_t * [V_1(1)]_t * [V_1(1)]_t = 0,$$
$$[V_1(q^2)]_t * [V_1(q^2)]_t * [V_1(1)]_t - (t + t^{-1}) [V_1(q^2)]_t * [V_1(1)]_t * [V_1(q^2)]_t  + [V_1(1)]_t * [V_1(q^2)]_t * [V_1(q^2)]_t = 0.$$
\end{example}

Nakajima \cite{N0} proved that the quantum Grothendieck has also a remarkable canonical basis $[L]_t$ parametrized by simple modules $L$ in $\mathcal{C}$ and that can be computed by an analog of the Kazhdan-Lusztig algorithm which produces also analogs of Kazhdan-Lusztig polynomials. 

\begin{theorem}\cite{N0}\label{klsl} Suppose $\mathfrak{g}$ is of simply laced-type. Then we have $\pi([L]_t) = [L]$ for any simple $L$ in $\mathcal{C}$ and so the $q$-character of simple modules can be computed by an algorithm. Also the analog Kazhdan-Lusztig polynomials are positive.
\end{theorem}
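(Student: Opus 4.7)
The plan is to follow Nakajima's geometric approach via quiver varieties, since the statement is intrinsically simply-laced and relies on an equivariant K-theoretic realization of $\mathcal{U}_q(\hat{\mathfrak{g}})$ which is only available in that case. First I would set up the Nakajima quiver varieties $\mathfrak{M}(v,w)$ attached to the Dynkin diagram of $\mathfrak{g}$ together with their ``graded'' (cyclic) versions $\mathfrak{M}^\bullet(v,w)$, and realize $\mathcal{U}_q(\hat{\mathfrak{g}})$ as a convolution algebra on their equivariant $K$-theory. In this realization, the simple $\ell$-highest weight modules $L$ in $\mathcal{C}$ are labelled by strata $O_L \subset \mathfrak{M}^\bullet$ (indexed by their Drinfeld polynomials), the fundamental quantum classes $[V_i(a)]_t$ correspond to the $v=0$ components, and standard modules (monomials in the fundamental classes) arise as pushforwards of constant sheaves along a natural proper semi-small map $\pi$ from a smooth resolution onto $\mathfrak{M}^\bullet$.

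Next, I would reinterpret the quantum Grothendieck ring $K_t(\mathcal{C})$ in terms of the Grothendieck group of mixed perverse sheaves on the varieties $\mathfrak{M}^\bullet$, with $t^{1/2}$ encoding the Tate twist (equivalently, a cohomological shift). Under this dictionary the standard monomial basis is identified with the classes $[\pi_! \underline{\mathbb{Q}}_\ell]$, and one defines the canonical basis stratum by stratum as $[L]_t := [\mathrm{IC}(\overline{O_L})]$. The transition matrix between the two bases is unitriangular with respect to Nakajima's partial order on $\ell$-weights, and by the Beilinson--Bernstein--Deligne decomposition theorem its entries are Poincaré polynomials of stalks of IC sheaves at points of smaller strata; these are the analog Kazhdan--Lusztig polynomials, and their positivity is automatic since they are graded dimensions of vector spaces. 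At this point the usual Kazhdan--Lusztig uniqueness argument (bar-invariance plus triangularity) gives an algorithm that computes $[L]_t$ from the standard basis.

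The crucial remaining point, and where I expect the main obstacle to lie, is the specialization identity $\pi([L]_t) = [L]$ in $K(\mathcal{C})$. This amounts to a geometric character formula ensuring that the IC-stalk dimensions which govern the decomposition of the classes $[\pi_! \underline{\mathbb{Q}}_\ell]$ in the IC basis coincide with the Jordan--Hölder multiplicities of the simple $\mathcal{U}_q(\hat{\mathfrak{g}})$-module $L$ inside the corresponding standard tensor-product module. To establish it, I would combine the geometric construction of the standard modules as equivariant $K$-theory of the resolution with a transversal slice analysis at a generic point of $O_L$, and invoke equivariant localization to identify the multiplicity of $L$ with the total dimension of the IC-stalk at that point, i.e.\ the value at $t=1$ of the corresponding KL polynomial. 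Once this multiplicity formula is in place, $\pi([L]_t) = [L]$ is immediate, and the algorithmic consequence for $q$-characters of arbitrary simple modules in $\mathcal{C}$ follows from the algorithm outlined in the previous paragraph.
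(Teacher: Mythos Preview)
Your proposal is correct and follows essentially the same route the paper points to: the paper does not give its own proof but attributes the theorem to Nakajima \cite{N0} and notes that it is ``a culmination of a series of papers on the geometric study of the representation theory of simply-laced quantum affine algebras in terms of quiver varieties.'' Your outline---quiver varieties and their graded versions, the convolution-algebra realization of $\mathcal{U}_q(\hat{\mathfrak{g}})$ on equivariant $K$-theory, the IC-sheaf interpretation of the canonical basis, positivity via the BBD decomposition theorem, and the matching of IC-stalk multiplicities with Jordan--H\"older multiplicities in standard modules---is exactly that geometric argument.
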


The proof in \cite{N0} is a culmination of a series of papers on the geometric study of the representation theory of simply-laced quantum affine algebras in terms of quiver varieties, see \cite{Naams} and references there in.

Note that geometric character formulas for standard modules (tensor products of fundamental representations) have been obtained in \cite{HL3} for all types. 
Although the analog of Kazhdan-Lusztig polynomials are now known to be positive for general types \cite{FHOO}, the analog of the Kazhdan-Lusztig conjecture (that is $\pi([L]_t) = [L]$) is still open in general (in its form formulated for the non simply-laced types in the phD thesis of the speaker \cite{H0}). Recent advances, using cluster categorification, have been obtained for the non simply-laced types. These advances are partly based on relations to cluster algebras and to categorification results in \cite{KKOP} that will be also discussed below. Note that first results in the simply-laced types \cite{Bi, Qi} showed the compatibility between 
quantum Grothendieck rings and quantizations of cluster algebras. 
Then the following relies in part on the interpretation of the quantization of the Grothendieck ring for general types in terms of quantum cluster algebras. 

Recall that a simple module is said to be reachable if it corresponds to a cluster monomial in a monoidal categorification (here we use the 
monoidal categorification as defined in the next sections).

\begin{theorem}\cite{FHOOs, FHOO} For non-simply laced quantum affine algebras, the dimension (and character) of any simple reachable representation can be obtained from a Kazhdan-Lusztig algorithm.
\end{theorem}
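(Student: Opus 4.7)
The plan is to reduce the non-simply-laced statement to Nakajima's Theorem \ref{klsl} by lifting the defining identity $[L]=$ cluster monomial (for reachable $L$) from the classical Grothendieck ring $K(\mathcal{C})$ up to the quantum Grothendieck ring $K_t(\mathcal{C})$. More precisely, the goal is to prove that for every reachable simple $L$ the Kazhdan--Lusztig basis element $[L]_t$ coincides with an explicit quantum cluster monomial obtained from an initial quantum seed by a finite sequence of mutations. Once this quantum monoidal categorification is established, a Kazhdan--Lusztig algorithm follows immediately: one starts from explicit fundamental quantum classes $[V_i(a)]_t$, whose classical $q$-characters are already computable by the Frenkel--Mukhin algorithm, applies the target sequence of quantum mutations, and specializes $t=1$ to read off the character and dimension of $L$.

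To carry this out I would first install on $K_t(\mathcal{C})$ a quantum cluster algebra structure (in the sense of Berenstein--Zelevinsky) whose classical specialization matches the cluster algebra structure on $K(\mathcal{C})$ underlying the monoidal categorification. The initial quantum seed consists of the quantum classes of Kirillov--Reshetikhin modules, with exchange matrix read off from the $T$-system and $t$-commutation matrix extracted from the bilinear form controlling $*_t$, the latter being a higher-rank analogue of the $\mathcal{N}$-formula visible in \eqref{deux}. A $t$-deformed $T$-system relation must then be verified, ensuring that the first-step mutated variable matches the expected element of $K_t(\mathcal{C})$; subsequent mutations then stay inside $K_t(\mathcal{C})$ by the quantum Laurent phenomenon applied in the quantum torus ambient to $K_t(\mathcal{C})$. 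The hard part is to prove that the resulting quantum cluster monomials are in fact Kazhdan--Lusztig basis elements $[L]_t$ for reachable $L$, since the geometric input of Nakajima (perverse sheaves on quiver varieties) is unavailable here.

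The approach to this hard step is to fold: choose a simply-laced algebra $\mathfrak{g}'$ equipped with a Dynkin diagram automorphism whose folded diagram is that of $\mathfrak{g}$, and construct a ring embedding $\iota : K_t(\mathcal{C}_{\mathfrak{g}})\hookrightarrow K_t(\mathcal{C}_{\mathfrak{g}'})$ compatible with both quantum cluster structures and sending each fundamental quantum class on the $\mathfrak{g}$-side to an orbit-product of fundamental quantum classes on the $\mathfrak{g}'$-side. For any reachable simple $L$ on the $\mathfrak{g}$-side, $\iota([L]_t)$ is then a quantum cluster monomial on the $\mathfrak{g}'$-side and, by Theorem \ref{klsl}, coincides with a (product of) Kazhdan--Lusztig basis element(s); pulling this identification back through $\iota$ yields the desired equality in $K_t(\mathcal{C}_{\mathfrak{g}})$. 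The main obstacle is the construction of $\iota$ with exactly the right intertwining properties, namely faithfulness of the induced embedding of quantum tori matching the bilinear forms controlling the $t$-commutations, and compatibility with quantum mutation so that reachable classes are sent to reachable classes (or to products thereof). The R-matrix analysis of \cite{KKOP} is decisive at this step, pinning down $\iota$ uniquely and closing the argument; positivity of the resulting analog Kazhdan--Lusztig polynomials then emerges as a byproduct of positivity of quantum cluster mutations.
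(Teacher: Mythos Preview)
Your overall architecture is correct and matches that of \cite{FHOOs, FHOO}: lift the identity ``reachable simple $=$ cluster monomial'' from $K(\mathcal{C})$ to $K_t(\mathcal{C})$ by equipping the latter with a quantum cluster algebra structure, and then transfer the hard step to a simply-laced type where Theorem~\ref{klsl} is available. The divergence, and the gap, is in the transfer mechanism. The folding embedding you propose, sending $[V_i(a)]_t$ for $\mathfrak{g}$ to the orbit product $\prod_{j\in\text{orbit}(i)}[V_j(a)]_t$ for the unfolded simply-laced $\mathfrak{g}'$, does not define a ring homomorphism of quantum Grothendieck rings: already at $t=1$ the $q$-character of a non-simply-laced fundamental (take the spin node in type $B_n$) is not the product of the $q$-characters of the orbit fundamentals on the $A_{2n-1}$-side, and the $t$-commutation data do not match under such an assignment. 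Even granting an embedding with the right cluster compatibility, the image of a reachable class would only be a \emph{product} of Kazhdan--Lusztig elements on the simply-laced side, and you have not explained how to pull a single-element identification back through $\iota$. The appeal to \cite{KKOP} at this point is a misattribution: that paper establishes monoidal categorification via $R$-matrices, not a folding embedding between quantum Grothendieck rings of different types.

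What \cite{FHOOs, FHOO} construct instead are ring \emph{isomorphisms} $K_t(\mathcal{C}^0_{\mathfrak{g}})\simeq K_t(\mathcal{C}^0_{\mathfrak{g}'})$ with $\mathfrak{g}'$ simply-laced, sending fundamental classes \emph{bijectively} to fundamental classes; the reparametrization of spectral parameters is governed by Q-data and twisted Auslander--Reiten combinatorics rather than by Dynkin folding. The isomorphism is obtained by realizing both sides as the \emph{same} quantum cluster algebra, so quantum cluster monomials correspond bijectively and the canonical bases are forced to match. Combined with the simply-laced identification of quantum cluster monomials with the canonical basis (via \cite{N0, Qi, KKOP}), this yields $\pi([L]_t)=[L]$ for every reachable $L$ on the non-simply-laced side, and positivity of the analog Kazhdan--Lusztig polynomials propagates as a byproduct. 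The input from \cite{KKOP} is thus the monoidal categorification on each side separately, feeding into the cluster-algebraic isomorphism, not the construction of a map between the two sides.
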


\section{Shifted quantum groups}\label{shrep}

The shifted quantum affine algebras form a new class of quantum groups closely related to (quantized $K$-theoretical) Coulomb branches (in the sense of Braverman-Finkelberg-Nakajima \cite{bfn2}) that were introduced by Finkelberg-Tsymbaliuk \cite{FT}. The algebra $\mathcal{U}_q^\mu(\hat{\mathfrak{g}})$ can be seen as a variation of the quantum affine algebra $\mathcal{U}_q(\hat{\mathfrak{g}})$ depending on a shift parameter which is a coweight $\mu$ of the Lie algebra $\mathfrak{g}$ 
(for example in the $sl_2$-case, we can see $\mu$ as an integer).
For $\mu = 0$, $\mathcal{U}_q^0(\hat{\mathfrak{g}})$ is essentially $\mathcal{U}_q(\hat{\mathfrak{g}})$.
These algebras have a very interesting representation theory. In \cite{H} the speaker started a systematic study of the representations of shifted quantum affine algebras.
Let us illustrate this with some first results.

\begin{theorem}\cite{H} $\mathcal{U}_q^\mu(\hat{\mathfrak{g}})$ has a non-zero finite-dimensional representation if and only if $\mu$ is codominant.\end{theorem}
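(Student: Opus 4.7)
The plan is to translate the statement into the Drinfeld-style presentation of $\mathcal{U}_q^\mu(\hat{\mathfrak{g}})$ and run a highest $\ell$-weight analysis analogous to Chari-Pressley's classification for $\mathcal{U}_q(\hat{\mathfrak{g}})$, with the shift coweight $\mu$ entering only through the asymptotic constraints on the Cartan currents $\psi_i^\pm(z)$. Recall that in $\mathcal{U}_q^\mu(\hat{\mathfrak{g}})$ the series $\psi_i^+(z)$ (resp.\ $\psi_i^-(z)$) is a Laurent polynomial whose lowest (resp.\ highest) degree is prescribed by $\langle \alpha_i,\mu\rangle$, so any scalar action of $\psi_i^\pm(z)$ on a highest weight vector is forced to be the expansion at $0$ and $\infty$ of a rational function whose asymptotic order at one end is rigidly fixed by $\mu$.

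For the ``only if'' direction, I would take a non-zero finite-dimensional module $V$. Finite-dimensionality together with the triangular decomposition of $\mathcal{U}_q^\mu(\hat{\mathfrak{g}})$ produces a non-zero highest $\ell$-weight vector $v\in V$ with $x_{i,n}^+v=0$ and $\psi_i^\pm(z)v=\Psi_i^\pm(z)v$ for scalars. A Garland--Kac style nilpotency argument inside each $\widehat{\mathfrak{sl}_2^i}$-subalgebra (exactly as in the unshifted Chari--Pressley proof) shows that $\Psi_i^+(z)$ and $\Psi_i^-(z)$ must be the two formal expansions of a single rational function $\Psi_i(z)=P_i(zq_i^{-1})/P_i(zq_i)$, up to an invertible factor accounting for the weight of $v$. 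The rigidity of the shift then identifies $\langle \alpha_i,\mu\rangle$ with $\deg\Psi_i^+ - \deg \Psi_i^-$, which rationality forces to be $\leq 0$. Hence $\mu$ must be codominant.

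For the ``if'' direction, assume $\mu$ is codominant and construct an explicit non-zero finite-dimensional representation. Pick any $I$-tuple of polynomials $(P_i)_{i\in I}$ with $P_i(0)=1$ and $\deg P_i = -\langle\alpha_i,\mu\rangle$, form the highest-weight Verma-type module $M^\mu(P)$ with highest $\ell$-weight whose asymptotic behavior at $0$ and $\infty$ matches exactly the shift required by $\mu$, and pass to its simple quotient $L^\mu(P)$. One then shows $L^\mu(P)$ is finite-dimensional. A second, perhaps cleaner, route follows the suggestion of Example~\ref{expre}(i): the negative prefundamental modules $L^-_{i,a}$ already carry a $\mathcal{U}_q^\mu(\hat{\mathfrak{g}})$-action for an appropriate codominant $\mu$, and by tensoring these with suitable simple objects in $\mathcal{C}$ and passing to the asymptotic limit used in \cite{HJ,Hinv} one produces, for every codominant $\mu$, a module on which the shifted algebra acts with only finitely many weight spaces, each of finite dimension.

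The main obstacle is the existence side: the ``only if'' direction is essentially formal once the rationality of the highest $\ell$-weight is secured, but producing a non-zero finite-dimensional quotient for every codominant $\mu$ requires a genuine construction. This is delicate because $\mathcal{U}_q^\mu(\hat{\mathfrak{g}})$ admits no evaluation homomorphism in general (cf.\ Example~\ref{evaljg}), so one cannot simply pull back a finite-dimensional $\mathcal{U}_q(\mathfrak{g})$-module. The two natural inputs are (a) the (quantized $K$-theoretic) Coulomb branch realization of Braverman--Finkelberg--Nakajima \cite{bfn2,FT}, which supplies finite-dimensional modules over truncations of $\mathcal{U}_q^\mu(\hat{\mathfrak{g}})$ in simply-laced type, and (b) the inductive/asymptotic limit construction of prefundamental-type modules which, combined with a Serre-type bound to control the weights appearing in $L^\mu(P)$, handles general $\mathfrak{g}$.
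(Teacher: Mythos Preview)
The paper is a survey lecture and does not supply a proof of this theorem; it merely records the statement with a citation to \cite{H}. So there is no in-paper argument to compare against, but your proposal still has concrete problems worth naming.

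In the ``only if'' direction your $\widehat{\mathfrak{sl}_2}$-reduction is the right instinct, but the conclusion you extract is the unshifted one and does not survive the shift. Forcing $\Psi_i(z)=P_i(zq_i^{-1})/P_i(zq_i)$ gives a rational function of degree $0$, which would only say $\alpha_i(\mu)=0$ for every $i$, i.e.\ $\mu=0$; that is visibly too strong, since the text itself exhibits the one-dimensional module $L(\Psib_{i,a})$ over $\mathcal{U}_q^{\omega_i^\vee}(\hat{\mathfrak{g}})$. The genuine output of the nilpotency argument in the shifted setting is that each $\Psi_i(z)$ factors as a polynomial times a degree-zero ratio, so $\alpha_i(\mu)=\deg\Psi_i\ge 0$. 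Your sentence ``rationality forces $\langle\alpha_i,\mu\rangle\le 0$, hence $\mu$ must be codominant'' therefore has both the sign and the mechanism wrong: rationality alone forces nothing, and in the convention of this paper codominant means $\alpha_i(\mu)\ge 0$ (again witnessed by the $\omega_i^\vee$ example).

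For the ``if'' direction you have inverted the difficulty. This is the trivial half: if $\mu=\sum_i m_i\omega_i^\vee$ with $m_i\ge 0$, pick any tuple ${\bf Z}=(Z_i)_{i\in I}$ of polynomials with $Z_i(0)=1$ and $\deg Z_i=m_i$; then $L({\bf Z})$ is one-dimensional, exactly as recorded in the example following Theorem~\ref{evth}. No Verma quotients, Coulomb-branch input, or asymptotic limits are required. Your route (b) through \emph{negative} prefundamentals $L_{i,a}^-$ is a red herring: those live over $\mathcal{U}_q^{-\omega_i^\vee}(\hat{\mathfrak{g}})$ with anti-codominant shift and are infinite-dimensional, and tensoring them with objects of $\mathcal{C}$ does not produce finite-dimensional modules.
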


For a general $\mu$, $\mathcal{U}_q^\mu(\hat{\mathfrak{g}})$ contains a copy of the Cartan subalgebra $\mathcal{U}_q(\mathfrak{h}) = \mathbb{C}[k_i^{\pm 1}]_{i\in I}$ of $\mathcal{U}_q(\hat{\mathfrak{b}})$, and so we have a notion of weight space as well as 
 an abelian category $\mathcal{O}^\mu$ of representations of  $\mathcal{U}_q^\mu(\hat{\mathfrak{g}})$ defined as above (an object in $\mathcal{O}^\mu$ is not necessarily finite-dimensional but its weight spaces are finite-dimensional). 

\begin{theorem}\label{pah}\cite{H}  If $\mu$ is anti-codominant, then $\mathcal{U}_q^\mu(\hat{\mathfrak{g}})$ contains a subalgebra isomorphic to $\mathcal{U}_q(\hat{\mathfrak{b}})$ so that any simple module in $\mathcal{O}^\mu$ is simple as a $\mathcal{U}_q(\hat{\mathfrak{b}})$-module.
\end{theorem}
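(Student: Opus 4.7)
The plan is to carry out two largely independent steps: (i) exhibit an explicit embedding $\iota : \mathcal{U}_q(\hat{\mathfrak{b}}) \hookrightarrow \mathcal{U}_q^\mu(\hat{\mathfrak{g}})$, and (ii) show that restriction along $\iota$ preserves simplicity of objects in $\mathcal{O}^\mu$.

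For the embedding, I would work in the Drinfeld-type presentation of $\mathcal{U}_q^\mu(\hat{\mathfrak{g}})$ as in \cite{FT}. The shift by $\mu$ modifies only the asymptotics of the Cartan generating series $\phi_i^\pm(z)$ at $z = 0$ and $z = \infty$; the Drinfeld currents $x_{i,r}^\pm$ continue to satisfy the same quadratic and Serre-type relations as in the unshifted case. Using a Drinfeld-Beck-Damiani style identification, I would map the Chevalley generators $e_i$ ($i\in I$) of $\mathcal{U}_q(\hat{\mathfrak{b}})$ to $x_{i,0}^+$, map $e_0$ to the standard combination built from $x_{\theta,1}^-$ where $\theta$ is the highest root, and map the $k_i^{\pm 1}$ to appropriate leading coefficients of the series $\phi_i^\pm(z)$. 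The anti-codominance of $\mu$ enters precisely to guarantee that the relevant asymptotic coefficients are invertible, so that $k_0^{\pm 1}$ is well-defined and the conjugation relations $k_i e_j k_i^{-1} = q_i^{C_{ij}} e_j$ hold. The Chevalley-Serre relations then follow from the corresponding Drinfeld relations just as in the unshifted case, since these involve only the modes of $x_{i,r}^\pm$ that are unaffected by the shift.

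For the preservation of simplicity, let $L$ be a simple object of $\mathcal{O}^\mu$. Simples in $\mathcal{O}^\mu$ are highest $\ell$-weight modules, so $L$ contains a (unique up to scalar) highest $\ell$-weight vector $v$, annihilated by the $x_{i,r}^+$ and generating $L$ under the lowering modes $x_{i,r}^-$. I would establish two facts: (a) $\iota(\mathcal{U}_q(\hat{\mathfrak{b}}))\cdot v = L$, and (b) every nonzero $u\in L$ satisfies $v\in \iota(\mathcal{U}_q(\hat{\mathfrak{b}}))\cdot u$. Point (b) follows from the cone condition in $\mathcal{O}^\mu$ and the presence of the raising generators $e_i$ ($i\in I$) inside $\iota(\mathcal{U}_q(\hat{\mathfrak{b}}))$: iterated application of the $e_i$'s reaches the top weight space $\mathbb{C} v$. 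Point (a) is the substantive part: $\iota(\mathcal{U}_q(\hat{\mathfrak{b}}))$ contains only "half" of the negative Drinfeld modes (morally, $x_{i,r}^-$ for $r\geq 1$), and one must show that the missing modes $x_{i,r}^-$ ($r\leq 0$), when applied to $v$, produce vectors already lying in $\iota(\mathcal{U}_q(\hat{\mathfrak{b}}))\cdot v$. The mechanism is to commute a missing mode against suitable modes of $\phi_i^+(z)$: the commutator equals a combination of modes inside $\iota(\mathcal{U}_q(\hat{\mathfrak{b}}))$, and after acting on $v$ the $\phi_i^+$-part acts by a scalar, yielding an identity that expresses $x_{i,r}^-\cdot v$ in terms of $\iota(\mathcal{U}_q(\hat{\mathfrak{b}}))\cdot v$. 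The anti-codominance of $\mu$ is what ensures that the relevant scalar eigenvalues are nonzero, so the recovery is invertible.

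The main obstacle is step (a), namely controlling the lowering modes outside $\iota(\mathcal{U}_q(\hat{\mathfrak{b}}))$: one must verify that the Cartan asymptotics dictated by the anti-codominant shift are exactly what is needed to force all missing $x_{i,r}^-\cdot v$ to lie in $\iota(\mathcal{U}_q(\hat{\mathfrak{b}}))\cdot v$. A careful bookkeeping of the $\phi_i^\pm(z)$-eigenvalues of $v$, combined with the explicit form of $\mu$, should reduce this to a finite family of non-vanishing statements, each controlled by the hypothesis.
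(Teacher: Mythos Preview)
The present paper is a survey; it states this theorem with a citation and gives no proof here, so there is nothing in the paper itself to compare your proposal against. The original argument is in \cite{H0}.

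That said, your two-step outline has the right shape and matches the architecture of the proof in \cite{H0}. Two remarks on the details.

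For step (i), the actual argument does not go through the Chevalley--Serre presentation and a Beck--Damiani translation. Instead one uses the Drinfeld-type description of $\mathcal{U}_q(\hat{\mathfrak{b}})$ already established in \cite{HJ}: in that picture the Borel is generated by all modes $x_{i,r}^+$, the non-negative Cartan modes $\phi_{i,r}^+$ (with $\phi_{i,0}^+$ invertible), and only the positive modes $x_{i,r}^-$ ($r\ge 1$). The embedding into $\mathcal{U}_q^\mu(\hat{\mathfrak{g}})$ is then essentially the identity on these generators, and one only has to check that the Borel relations form a subset of the shifted-algebra relations. Anti-codominance enters precisely to ensure that $\phi_{i,0}^+$ remains the invertible leading coefficient of $\phi_i^+(z)$ after the shift. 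Your route would work but is heavier.

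For step (ii), your (b) is fine, but your proposed mechanism for (a) has a gap. Commuting a missing mode $x_{i,s}^-$ against $\phi_i^+(z)$ does not create new $x^-$-modes: $[\phi_{i,m}^+, x_{j,s}^-]$ is again proportional to a single mode $x_{j,s+m}^-$, so you only slide the index, and on $v$ the $\phi^+$-eigenvalue factor you pick up is generically nonzero regardless of $\mu$---anti-codominance plays no visible role in that computation. The argument that actually works uses the relation $[x_{i,r}^+,x_{i,s}^-] = (\phi_{i,r+s}^+ - \phi_{i,r+s}^-)/(q_i-q_i^{-1})$: applying raising modes (all in the Borel) to $x_{i,s}^- v$ produces Cartan terms, and the $\phi^-$-part acts on $v$ by the expansion of $\Psi_i(z)$ at infinity. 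Anti-codominance gives $\deg(\Psi_i)=\alpha_i(\mu)\le 0$, which is exactly the vanishing needed for the resulting recursion to express every $x_{i,s}^- v$ with $s\le 0$ in terms of the Borel-accessible modes $s\ge 1$. So the hypothesis enters through $\phi^-$, not $\phi^+$; once you reroute the argument this way it goes through.
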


This implies a profound relation between the representation of shifted quantum affine algebras and quantum affine Borel algebras, and so a relation to quantum integrable models. This is illustrated in the example below.

\begin{example} The action of $\mathcal{U}_q(\hat{\mathfrak{b}})$ on the prefundamental representation $L_{i,a}^- = L^{\mathfrak{b}}(\Psib_{i,a}^{-1})$ can be extended to an 
action of $\mathcal{U}_q^{-\omega_i^\vee}(\hat{\mathfrak{b}})$, where $\omega_i^\vee$ is a fundamental coweight, to obtain $L(\Psib_{i,a}^{-1})$ (see Example \ref{expre}).
\end{example}

\begin{theorem}\cite{H} The simple objects $L(\Psib)$ in $\mathcal{O}^\mu$ are parametrized by $n$-tuples of rational fractions $\Psib = (\psi_i(z))_{i\in I}$ regular at $0$ satisfying $\text{deg}(\psi_i(z)) = \alpha_i(\mu)$ where $(\alpha_i)_{i\in I}$ are the simple roots of $\mathfrak{g}$.
\end{theorem}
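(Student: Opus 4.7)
The plan is to adapt the highest $\ell$-weight theory of \cite{HJ}, which classifies simples in the category $\mathcal{O}$ of the ordinary quantum affine Borel, to the shifted setting. The argument naturally splits into uniqueness (every simple has a well-defined highest $\ell$-weight $\Psib$ satisfying the stated conditions) and existence (every admissible $\Psib$ is realized by some simple object).

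For uniqueness, let $L$ be simple in $\mathcal{O}^\mu$. The cone condition together with finite-dimensionality of the weight spaces forces the set of weights of $L$ to admit a maximal element $\lambda$. The mutually commuting Drinfeld--Cartan currents $\phi_i^\pm(z)$ preserve the weight decomposition (they commute with each $k_j$), so on the finite-dimensional space $L_\lambda$ I pick a common generalized eigenvector $v$. Since $x_{i,r}^+ v \in L_{\lambda + \alpha_i} = 0$ by maximality, $v$ is a highest $\ell$-weight vector; by simplicity $L$ is generated by $v$, and is therefore determined up to isomorphism by the tuple $(\Psi_i^\pm(z))_{i\in I}$ of eigenvalues of $\phi_i^\pm(z)$ on $v$. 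In the shifted presentation the generating series $\phi_i^\pm(z)$ have leading orders shifted by $\alpha_i(\mu)$, so the defining relations (in particular the Drinfeld commutators $[x_{i,r}^+, x_{i,s}^-]$ in which $\phi_i^\pm$ appear) force $\Psi_i^+(z)$ and $\Psi_i^-(z)$ to be the expansions at $0$ and $\infty$ of a single rational function $\psi_i(z)$ regular at $0$ with $\deg \psi_i = \alpha_i(\mu)$.

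For existence, given any admissible $\Psib$, I would form the Verma-type module $M(\Psib)$ by inducing to $\mathcal{U}_q^\mu(\hat{\mathfrak{g}})$ the one-dimensional representation of the Borel-type subalgebra generated by the $\phi_i^\pm(z)$ and the raising currents $x_{i,r}^+$, on which the Cartan currents act via $\Psib$ and the raising currents act trivially. The module $L(\Psib)$ is then the unique simple quotient of $M(\Psib)$, tautologically of highest $\ell$-weight $\Psib$. The main obstacle is to verify that $L(\Psib)$ belongs to $\mathcal{O}^\mu$, i.e., that its weight spaces are finite-dimensional. For $\mu$ anti-codominant I would combine the embedding $\mathcal{U}_q(\hat{\mathfrak{b}}) \hookrightarrow \mathcal{U}_q^\mu(\hat{\mathfrak{g}})$ of Theorem \ref{pah} with the classification of \cite{HJ} to transport the finiteness directly from the unshifted category $\mathcal{O}$. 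For general $\mu$, I would reduce to the anti-codominant case using the coproduct-like fusion structure on shifted quantum affine algebras of \cite{FT}, which realizes $\mathcal{U}_q^\mu$-modules as subquotients of tensor products of $\mathcal{U}_q^{\mu_1}$- and $\mathcal{U}_q^{\mu_2}$-modules for any decomposition $\mu = \mu_1 + \mu_2$ with $\mu_1$ chosen anti-codominant; transporting finite-dimensionality of weight spaces across such a fusion is the subtle technical step.
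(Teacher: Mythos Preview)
The paper under review is a survey: this theorem is stated with a citation to \cite{H} (the detailed argument is in \cite{H0}) and no proof is given here, so there is no in-paper proof to compare against. Your outline follows the standard highest $\ell$-weight strategy, which is indeed how the result is proved in \cite{H0}; I comment only on the places where your sketch is too quick.

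In the uniqueness part, the assertion that ``the defining relations \dots\ force $\Psi_i^+(z)$ and $\Psi_i^-(z)$ to be the expansions at $0$ and $\infty$ of a single rational function'' is the real content and does not fall out formally from the commutator relation alone. One needs an honest argument: reduce to the $i$th copy of $\mathcal{U}_q^{\alpha_i(\mu)}(\widehat{\mathfrak{sl}_2})$, and use that on a highest $\ell$-weight module the currents $x^\pm_{i,r}$ acting between the top weight space and the one just below it produce finitely many independent operators (finite-dimensionality of weight spaces), from which one extracts a linear recursion on the coefficients of $\Psi_i^\pm$ forcing rationality with the prescribed degree. This is the shifted analogue of the Chari--Pressley argument and is carried out in \cite{H0}; you should not present it as automatic.

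In the existence part there are two soft circularities. First, for anti-codominant $\mu$ you invoke Theorem~\ref{pah} to transport finiteness from $L^{\mathfrak{b}}(\Psib)$, but that theorem is stated for simples already in $\mathcal{O}^\mu$; the actual construction in \cite{H0} goes the other way, extending the $\mathcal{U}_q(\hat{\mathfrak{b}})$-module $L^{\mathfrak{b}}(\Psib)$ (which has finite weight multiplicities by \cite{HJ}) to a $\mathcal{U}_q^\mu(\hat{\mathfrak{g}})$-module, rather than comparing two a priori different simple quotients. Second, your reduction ``$\mu=\mu_1+\mu_2$ with $\mu_1$ anti-codominant'' is incomplete: you still need $L(\Psib_2)$ to lie in $\mathcal{O}^{\mu_2}$, which is what you are proving. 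The fix, used in \cite{H0}, is to factor $\Psib = \mathbf{Q}^{-1}\cdot \mathbf{P}$ with $\mathbf{P}$ a tuple of polynomials; then $\mu_2$ is codominant and $L(\mathbf{P})$ is one-dimensional, so the fusion with the anti-codominant piece $L(\mathbf{Q}^{-1})$ gives the required finiteness without circularity.
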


\begin{example} As explained above, for $\mu$ anti-codominant, we recover the representation $L^{\mathfrak{b}}(\Psib)$ of $\mathcal{U}_q(\hat{\mathfrak{b}})$ (the underlying vector space, graded by weights, is the same). This includes the negative prefundamental representations and the finite-dimensional fundamental representations. 
\end{example}

\begin{example} For $i\in I$ and $a\in\mathbb{C}^*$, the simple representation $L(\Psib_{i,a})$ of $\mathcal{U}_q^{\omega_i^\vee}(\hat{\mathfrak{g}})$ is called a positive prefundamental representation (here $\omega_i^\vee$ is a fundamental coweight). It is of dimension $1$ ! (in contrast, recall that $L^{\mathfrak{b}}(\Psib_{i,a})$ is infinite-dimensional).
\end{example}

\begin{remark} The last example underlines the difference with the representation theory of ordinary quantum affine algebras: for any type of $\mathfrak{g}$, the algebra 
$\mathcal{U}_q^{\omega_i^\vee}(\hat{\mathfrak{g}})$ has a non-countable family of non-isomorphic $1$-dimensional representations. Another illustration: $\mathcal{U}_q^{-\omega_1^\vee}(\widehat{\mathfrak{sl}_2})$ admits evaluation morphisms not to the ordinary quantum group $\mathcal{U}_q(\mathfrak{sl}_2)$ 
as in Example \ref{evalj}, but to the $q$-oscillator algebra ($[E,F]$ equals $\frac{K}{q - q^{-1}}$ instead of $\frac{K - K^{-1}}{q - q^{-1}}$).
\end{remark}

Without loss of generality, we may assume that all roots and poles of the rational fractions as in Theorem \ref{pah} are integral powers of $q$. One obtains an abelian category
$$\mathcal{O}^{sh} = \bigoplus_{\mu\text{ coweight}} \mathcal{O}^\mu$$
which contains $\mathcal{O}^{sh,l}$ its subcategory of modules of finite length.

\begin{theorem}\cite{H, HZ} The sum of Grothendieck groups
$$K(\mathcal{O}^{sh}) = \bigoplus_{\mu} K(\mathcal{O}^\mu)$$
has a natural topological ring structure, with positive structure constants on the topological basis of simple classes. 

Moreover $K(\mathcal{O}^{sh,l})$ is a subring $K(\mathcal{O}^{sh})$. 
\end{theorem}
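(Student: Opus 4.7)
The plan is to construct the multiplication on $K(\mathcal{O}^{sh})$ by a \emph{Drinfeld-style fusion product}. As in the work of Finkelberg--Tsymbaliuk on shifted quantum affine algebras, for coweights $\mu_1,\mu_2$ one has a (meromorphic) coproduct
$$\Delta_{\mu_1,\mu_2} : \mathcal{U}_q^{\mu_1+\mu_2}(\hat{\mathfrak{g}}) \longrightarrow \mathcal{U}_q^{\mu_1}(\hat{\mathfrak{g}}) \,\hat{\otimes}\, \mathcal{U}_q^{\mu_2}(\hat{\mathfrak{g}})$$
taking values in a suitable completion. First I would use this to endow the (completed) tensor product $V_1\otimes V_2$ of objects $V_i\in\mathcal{O}^{\mu_i}$ with the structure of a $\mathcal{U}_q^{\mu_1+\mu_2}(\hat{\mathfrak{g}})$-module, denoted $V_1 * V_2$. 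The cone condition on weights intrinsic to the category $\mathcal{O}$ ensures that every weight space of $V_1 * V_2$ is finite-dimensional and that $V_1 * V_2 \in \mathcal{O}^{\mu_1+\mu_2}$; in particular the completion needed in the coproduct only mobilizes finite sums on each weight space.

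Next I would introduce the topology on $K(\mathcal{O}^{sh})$. Since objects need not be of finite length, the class $[V]$ is defined as a (possibly infinite) formal $\mathbb{Z}_{\geq 0}$-linear combination $\sum_L m_L(V)[L]$ of simple classes, where $m_L(V)$ is read off from the Jordan--H\"older multiplicities in each weight space. The natural topology declares a series $\sum_L n_L[L]$ convergent if, for every coweight $\mu$ and every weight $\lambda$, only finitely many simples $L\in\mathcal{O}^\mu$ with $L_\lambda\neq 0$ satisfy $n_L\neq 0$. Setting $[V_1]\cdot[V_2] := [V_1 * V_2]$ on simple classes and extending bilinearly and continuously produces the desired ring structure. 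Positivity is automatic: the structure constants $c_{L_1,L_2}^{L}$ are Jordan--H\"older multiplicities, hence lie in $\mathbb{Z}_{\geq 0}$.

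The main technical obstacle, in my view, is showing that the product of two simple classes actually converges in this topology; equivalently, that for every weight $\lambda$ only finitely many simple composition factors $L$ of $L_1 * L_2$ have $L_\lambda\neq 0$. For this I would combine (i) the cone condition on the support of $L_1 * L_2$, (ii) the classification of simples in $\mathcal{O}^\mu$ by tuples of rational fractions, used to bound the highest $\ell$-weights of composition factors of $L_1 * L_2$ by that of $L_1 * L_2$ itself, and (iii) the finite-dimensionality of each weight space combined with an inductive filtration-by-weight argument, standard in the category $\mathcal{O}$ of Kac--Moody algebras and adaptable to the shifted setting.

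For the subring claim, it suffices to prove that the fusion of two simple modules in $\mathcal{O}^{sh,l}$ has finite length. In the non-shifted case this fails (the paper recalls that $L_{i,a}^+\otimes L_{j,b}^-$ has infinite length), but in the shifted setting the positive prefundamental representations become one-dimensional, removing the main source of infinite composition series. Using the classification by rational fractions and the $\ell$-weight analysis above, one checks that only finitely many highest $\ell$-weights can occur among the composition factors of $L_1 * L_2$, so the fusion has finite length; the case of general finite-length objects follows by induction on length, and closure of $K(\mathcal{O}^{sh,l})$ under the product is then immediate.
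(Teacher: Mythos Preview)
Your overall strategy matches the paper's description: the ring structure is induced by a fusion product built from the Drinfeld coproduct (the paper points to \cite{HPr} for the method, adapted to the shifted setting in \cite{H0}), and the stability of finite-length objects under fusion is attributed to \cite{HZ}. Two steps in your sketch, however, are more delicate than you indicate.

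First, the fusion product is not simply the coproduct acting on $V_1\otimes V_2$ with the cone condition truncating the infinite sums. In the Drinfeld coproduct, a term such as $\sum_{k\ge 0}\phi_{i,-k}^-\otimes x_{i,r+k}^+$ has all summands of the \emph{same} weight, so weight-space finiteness of $V_1$ and $V_2$ does not make it a finite sum. The actual construction, as in \cite{HPr} and its shifted version, introduces a spectral deformation parameter $u$, works over $\mathbb{C}(u)$ (or a formal neighborhood), and then specializes; rationality of $\ell$-weights in the category $\mathcal{O}$ is the key input, not the weight cone alone.

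Second, your finite-length argument is a heuristic rather than a proof. That positive prefundamentals become one-dimensional in the shifted world is suggestive but not decisive: negative prefundamentals $L(\Psib_{i,a}^{-1})$ remain infinite-dimensional, and you must still rule out, for instance, that $L(\Psib_{i,a}^{-1})*L(\Psib_{j,b}^{-1})$ has composition factors whose highest weights drift arbitrarily far down the cone. Saying that the possible highest $\ell$-weights of subquotients are products of $\ell$-weights of the two factors does not bound their number, since each infinite-dimensional factor already has infinitely many $\ell$-weights. This finite-length statement is precisely the main theorem of \cite{HZ} (whose title is ``Jordan--H\"older property for shifted quantum affine algebras'') and requires substantially more than the outline you give.
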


The ring structure is induced by a fusion product construction obtained from the Drinfeld coproduct as in \cite{HPr}. 
This ring can be considered as an analogue of a Grothendieck ring. In addition, it is commutative.
The fact that the subcategory of finite length representations is stable by fusion product is established in \cite{HZ}.

\section{Shifted quantum groups and cluster algebras}\label{sshcl}

Consider the subcategory $\mathcal{C}^{sh}\subset \mathcal{O}^{sh, l}$ of finite-dimensional representations of shifted quantum affine algebras. 
Its Grothendieck ring $K(\mathcal{C}^{sh})$ contains as a subring the Grothendieck ring $K(\mathcal{C})$ of finite-dimensional representations of the ordinary quantum affine algebra.

\begin{theorem}\label{mth}\cite{HL2, KKOP, H} $K(\mathcal{C}^{sh})$ is isomorphic to a cluster algebra $\mathcal{A}_{\Gamma_\infty}$, with an explicit quiver $\Gamma_\infty$. The initial cluster variables are classes of positive
prefundamental representations. The cluster monomials correspond to certain classes of simple modules.
\end{theorem}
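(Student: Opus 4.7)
The plan is to build the cluster algebra isomorphism from three ingredients already established in the excerpt: the $QQ^*$-relations of Remark \ref{addqq}, the one-dimensionality of positive prefundamental representations in $\mathcal{C}^{sh}$ (stated after Example \ref{expre}, extended to general types), and the part (iii) of Example \ref{expre} expressing finite-dimensional fundamental classes as invertible multiples of quotients of positive prefundamental classes. Combined with the commutativity and topological ring structure of $K(\mathcal{O}^{sh,l})$, these give enough relations to recognize $K(\mathcal{C}^{sh})$ as a cluster algebra.

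First I would construct $\Gamma_\infty$ explicitly. Using the reduction that all spectral parameters may be taken to be integer powers of $q$, the vertex set is indexed by a suitable subset of $I \times \mathbb{Z}$, and arrows encode the symmetrized Cartan matrix in the bipartite/shifted pattern seen in the $\mathfrak{sl}_2$ and $\mathfrak{sl}_3$ examples. The initial seed attaches to the vertex $(i,m)$ the class $[L(\Psib_{i,q^m})]$ of the positive prefundamental representation of $\mathcal{U}_q^{\omega_i^\vee}(\hat{\mathfrak{g}})$. Because these representations are one-dimensional, their classes are invertible, and their pairwise products embed into a commutative torus inside $K(\mathcal{C}^{sh})$; this provides the ambient field hosting the target of the morphism $\phi : \mathcal{A}_{\Gamma_\infty} \to K(\mathcal{C}^{sh})$.

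Next I would identify the first-step mutations with the $QQ^*$-relations of Remark \ref{addqq}. Writing the mutation of the cluster variable at vertex $(i,a)$ and comparing with
\[
[L_{i,a}^*][L_{i,a}^+] = \prod_{j,\, C_{i,j}<0}[L_{j,aq^{-B_{j,i}}}^+] + [-\alpha_i]\prod_{j,\, C_{i,j}<0}[L_{j,aq^{B_{i,j}}}^+],
\]
one reads off that $[L_{i,a}^*]$ is precisely the exchanged variable, provided the invertible class $[-\alpha_i]$ is absorbed into the coefficient ring (this is consistent with treating the group algebra of the weight lattice as frozen data). Surjectivity of $\phi$ would then be obtained by iterating these mutations: Example \ref{expre}(iii) allows one to express every fundamental class $[V_i(a)]\in K(\mathcal{C})\subset K(\mathcal{C}^{sh})$ as a product of positive prefundamental classes with invertible coefficients, and Theorem \ref{stgr} then propagates surjectivity from fundamental classes to the entire subring $K(\mathcal{C})$. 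For the extra classes coming from nonzero shifts, the classes $[L(\Psib)]$ for arbitrary regular $\Psib$ are reached by further mutations producing the $\tilde L_{i,a}$ and $L_{i,a}^*$ families. Injectivity of $\phi$ follows from a weight-grading argument in the quantum torus.

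The main obstacle, and the last step, is showing that cluster monomials correspond to simple classes, i.e.\ that $\phi$ is a monoidal categorification in the sense of \cite{HL1}. This requires two things: that each cluster variable is a real simple class, and that products of cluster variables belonging to a common cluster remain simple. I would handle the first via induction on mutation distance, checking at each mutation that the new exchanged class is simple by appealing to the $QQ^*$-exchange identity together with the length-$2$ property illustrated in the $TQ$ example (the left-hand side is a product of two simples whose expansion involves exactly two simple summands, which forces simplicity of the newly produced factor). For the second, I would invoke the reality/commutation criterion from the $R$-matrix framework of KKOP: cluster-adjacent variables satisfy a vanishing of certain $\mathrm{Ext}^1$, which implies that their tensor product is simple. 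Propagating this compatibility across all mutations — equivalently, controlling the prime factorization of simple objects throughout $\mathcal{C}^{sh}$ — is the delicate point where the full strength of the R-matrix machinery of \cite{KKOP} is needed, and I expect this to be the hardest part of the argument.
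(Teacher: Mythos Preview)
Your overall architecture—send initial cluster variables to prefundamental classes, identify first-step mutations with $QQ^*$-relations, then appeal to the $R$-matrix machinery of \cite{KKOP} for the categorification statement—is in line with how the cited references proceed. Two of your intermediate steps, however, do not go through as written.

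First, Example \ref{expre}(iii) does not say what you claim. It records the highest $\ell$-weight of $V_i(a)$ as $[\omega_i]\Psib_{i,aq_i^{-1}}\Psib_{i,aq_i}^{-1}$; it does \emph{not} express $[V_i(a)]$ as a Laurent monomial in prefundamental classes. Indeed the $TQ$-relation displayed just after Theorem \ref{mth} shows that $[V_1(a)]$ is a genuine two-term Laurent polynomial in the $[L(\Psib_{1,b})]$, not a monomial. So the fundamental classes enter the cluster algebra as \emph{mutated} variables, not via a factorization, and your surjectivity argument needs to run through the mutation process rather than through Example \ref{expre}(iii). You also give no argument that the image of $\phi$ covers all of $K(\mathcal{C}^{sh})$ rather than just $K(\mathcal{C})$; this requires an independent description of generators for $K(\mathcal{C}^{sh})$.

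Second, and more seriously, the step ``the left-hand side is a product of two simples whose expansion involves exactly two simple summands, which forces simplicity of the newly produced factor'' is not a valid deduction. From $[X][Y]=[A]+[B]$ with $X,A,B$ simple one cannot conclude that $Y$ is simple: nothing excludes $[Y]=[Y_1]+[Y_2]$ with $[X][Y_k]$ simple for each $k$. The mechanism actually used in \cite{KKOP} is quite different: one proves, by a simultaneous induction using the normalized $R$-matrix calculus and the $\Lambda$-invariants, that the exchange relation is realized by a \emph{short exact sequence} in the category and that the new object is real and mutation-compatible with every other member of the seed. The inductive hypothesis you propose (simplicity of the previous cluster variable alone) is too weak to drive that induction; one needs to carry the full reality and compatibility data through every mutation, which is precisely the content you defer to \cite{KKOP} at the end.
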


\begin{example} For $\mathfrak{g} = \mathfrak{sl_2}$, the initial seed is 
$$\cdots\longrightarrow  \left[ L(\Psib_{1,q^{-2}})\right] \longrightarrow \left[L(\Psib_{1,1})\right]  \longrightarrow \left[L(\Psib_{1,q^{2}})\right]  \longrightarrow \cdots$$
(up to factors by invertible representations, that we omit in this lecture to avoid technicalities).
The first step mutations are given by the Baxter $TQ$-relations
$$[L(\Psib_{1,a})][V_1(a)] = [L(\Psib_{1,aq^2})] + [L(\Psib_{1,aq^{-2}})].$$
This connects cluster categorification to quantum integrable models (for general types, the first step mutations are $QQ^*$-relations 
as in Remark \ref{addqq}).
\end{example}

\begin{example}
For $\mathfrak{g} = \mathfrak{sl}_3$, the initial seed is
$$\xymatrix{ \cdots \ar[rr]& & [L(\Psib_{1,q^{-2}})] \ar[ld]  \ar[rr] & & [L(\Psib_{1,1})]\ar[rr]\ar[ld] && \cdots
\\ \cdots \ar[r]&[L(\Psib_{2,q ^{-3}})]\ar[rr]\ar[lu]&&[L(\Psib_{2,q^{-1}})]\ar[rr]\ar[lu]&&[L(\Psib_{2,q})]\ar[r]\ar[lu]&\cdots
}$$
\end{example}

\begin{remark} The results of \cite{K3, K1, K2} culminating in \cite{KKOP} established a monoidal categorification conjecture formulated in \cite{HL3} for finite-dimensional representations 
of ordinary quantum affine algebras.
\end{remark}

We use the Weyl group action and the generalized $QQ$-relations in \cite{FHw, FHgq} to obtain the following Theorem \ref{clsh}.
Indeed we introduce in \cite{FHw} the operators $\Theta_i$ ($i\in I$) on a sum 
$$\Pi = \bigoplus_{w\in W}\tilde{\mathcal{Y}}^w$$ 
of completions $\tilde{\mathcal{Y}}^w$ of $\mathcal{Y} = \mathbb{Z}[Y_{i,a}^{\pm 1}]_{i\in I, a\in\mathbb{C}^*}$, parametrized by the Weyl group $W$ of $\mathfrak{g}$. 
The operators are characterized by the formula ($i,j\in I$ and $a\in\mathbb{C}^*$): 
$$\Theta_i(Y_{j,a}) = Y_{j,a}A_{i,aq^{-1}}^{-\delta_{i,j}}\frac{\Sigma_{i,aq^{-3}}^{\delta_{i,j}}}{\Sigma_{i,aq^{-1}}^{\delta_{i,j}}},$$
\begin{equation}\label{wa}\text{ where }A_{i,a} = Y_{i,aq_i^{-1}}Y_{i,aq_i}\prod_{j|C_{j,i} = -1} Y_{j,a}^{-1}\prod_{j|C_{j,i} = -2}Y_{j,aq^{-1}}^{-1}Y_{j,aq}^{-1}\prod_{j|C_{j,i} = -3}Y_{j,aq^{-2}}^{-1}Y_{j,a}^{-1}Y_{j,aq^2}^{-1}\end{equation}
are the root monomials of \cite{Fre} and $\Sigma_{i,a}$ is the solution in $\Pi$ of the $q$-difference equation
$$\Sigma_{i,a} = 1 + A_{i,a}^{-1}\Sigma_{i,aq^{-2}}.$$
Now $\mathcal{Y}$ embeds in $\Pi$ diagonally, and we have the following analogue of a classical result.

\begin{theorem}\cite{FHw} The $\Theta_i$ are involutions and define a Weyl group action on $\Pi$ (with the simple reflexions acting as $\Theta_i$). Moreover, the ring of invariants
$\mathcal{Y}^W$ equals the ring of $q$-characters in $\mathcal{Y}$.
\end{theorem}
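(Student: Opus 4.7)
The plan is to split the theorem into three parts: (i) each $\Theta_i$ extends to a well-defined ring automorphism of $\Pi$, (ii) $\Theta_i^2 = \mathrm{id}$ and the $\Theta_i$ satisfy the braid relations of $W$, and (iii) $\mathcal{Y}^W$ coincides with the $q$-character subring. The guiding classical analogy is that $\Theta_i$ should play the role of the simple reflection $s_i$, with $A_{i,aq^{-1}}$ playing the role of $\alpha_i$ (note the defining formula $\Theta_i(Y_{j,a}) = Y_{j,a}A_{i,aq^{-1}}^{-\delta_{i,j}}(\cdots)$ mirrors $s_i(\omega_j) = \omega_j - \delta_{i,j}\alpha_i$); the $\Sigma_{i,a}$ factors are a completion-level correction that must be present in order for $\Theta_i^2$ to act trivially in this truncated, non-classical setting.

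For the involution property, I would first determine $\Theta_i(\Sigma_{i,a})$ from the defining $q$-difference equation $\Sigma_{i,a} = 1 + A_{i,a}^{-1}\Sigma_{i,aq^{-2}}$ together with the computation of $\Theta_i(A_{i,a})$ from the explicit formula \eqref{wa}. One checks that $\Theta_i(A_{i,a}) = A_{i,a}^{-1}$ (the analogue of $s_i(\alpha_i) = -\alpha_i$), since the monomials $Y_{j,\cdots}$ that occur in $A_{i,a}$ for $j \ne i$ with coefficient $-C_{j,i}$ times are fixed by $\Theta_i$, while $Y_{i,aq_i^{\pm 1}}$ are shifted exactly as required. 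Using this, the $q$-difference equation forces $\Theta_i(\Sigma_{i,a}) = A_{i,a}\Sigma_{i,aq^{-2}}/\Sigma_{i,a}$ (or equivalent), and a direct substitution into $\Theta_i(\Theta_i(Y_{j,a}))$ with $j = i$ collapses the $\Sigma$ ratios and the $A_{i,aq^{-1}}^{-1}$ factors to yield $Y_{i,a}$; for $j \ne i$ there is nothing to check. This also proves $\Theta_i$ defines a ring automorphism of $\Pi$ since it is invertible on each of the generators.

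For the braid relations, the easy case $C_{i,j} = 0$ follows because $\Theta_i$ and $\Theta_j$ act on disjoint sets of $Y$-variables through the Kronecker deltas and on disjoint $\Sigma$-series. For non-commuting pairs, the strategy is to verify the relation on the generators $Y_{k,a}$: one computes both $(\Theta_i\Theta_j)^{m_{ij}}(Y_{k,a})$ term-by-term using the $q$-difference equations for $\Sigma_{i,a}$ and $\Sigma_{j,a}$ and the identity $\Theta_i(A_{j,a}) = A_{j,a}\cdot A_{i,\bullet}^{-a_{j,i}}\cdot(\Sigma\text{-ratios})$ which again is derived from \eqref{wa}. The main obstacle here is the book-keeping of the $\Sigma$ factors across multiple applications of distinct $\Theta_i$, and proving that in the completion $\Pi$ all such factors telescope; I would reduce as far as possible to the rank-two subsystems $A_1\times A_1$, $A_2$, $B_2$, $G_2$ and check each uniformly from the formulas.

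For the invariant ring, I would use Frenkel--Reshetikhin's description of the $q$-character image as a subring of $\mathcal{Y}$ and the polynomial presentation $\mathbb{Z}[X_{i,a}]$ of Theorem \ref{stgr}. One direction: the $q$-character of each fundamental representation $V_i(a)$ is $\Theta_j$-invariant for all $j \in I$, which I would verify by induction on the depth of $q$-character monomials using the Frenkel--Mukhin algorithm, exactly matching the way the classical character is $W$-invariant; since the $q$-characters generate the image ring, this gives $\mathrm{Im}(\chi_q) \subset \mathcal{Y}^W$. For the reverse inclusion, an element $\chi \in \mathcal{Y}^W$ is $W$-symmetric so its highest monomials (with respect to the partial order on monomials given by the $A_{i,a}$) are dominant, and by subtracting an appropriate $\mathbb{Z}$-linear combination of fundamental $q$-characters $\chi_q(L(m))$ indexed by dominant monomials one can reduce $\chi$ to zero by a Noetherian induction, exactly as in the classical proof that $\mathbb{Z}[P]^W$ is generated by fundamental characters. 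The potential obstacle here is ensuring the inductive step remains inside $\mathcal{Y}$ (and not just in the completion $\Pi$), which should follow from the fact that dominance of a monomial and $\Theta_i$-invariance together force the cancelling $q$-character to be a polynomial in the $Y_{j,a}^{\pm 1}$.
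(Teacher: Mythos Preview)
The paper you are working from is a survey; it states this theorem with attribution to \cite{FHw} and gives no proof whatsoever. There is therefore nothing in the present paper against which to compare your proposal.

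That said, a brief comment on the proposal itself. Your three-part plan (well-definedness and involution property, braid relations via reduction to rank-two subsystems, identification of $\mathcal{Y}^W$ with the $q$-character ring by a highest-monomial induction) is the natural strategy and is in the spirit of the argument in the cited source. However, your intermediate claim that $\Theta_i(A_{i,a}) = A_{i,a}^{-1}$ on the nose is not correct as written: applying $\Theta_i$ to the two factors $Y_{i,aq_i^{\pm 1}}$ inside $A_{i,a}$ produces non-trivial $\Sigma$-ratios which do not visibly cancel, so $\Theta_i(A_{i,a})$ is $A_{i,a}^{-1}$ only up to such a ratio. This has to be fed back through the $q$-difference equation $\Sigma_{i,a} = 1 + A_{i,a}^{-1}\Sigma_{i,aq^{-2}}$ before one can conclude $\Theta_i^2 = \mathrm{id}$, and the book-keeping is more delicate than your sketch suggests. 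The same caution applies to the braid-relation step, where the $\Sigma$-factors are the entire difficulty.
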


Now starting from $Q_a^{\omega_i^\vee} = [L(\Psib_{i,a})]$, and using this Weyl group action, we introduce in \cite{FHgq} for general $w\in W$, $i\in I$ new elements 
$Q_a^{w(\omega_i^\vee)}$ in the Grothendieck ring $K(\mathcal{O}^{sh})$ which satisfy a generalized version of the $QQ$-system for each $w\in W$ (with the $Q_{a}^{w(\omega_i^\vee)}$ for the $Q$-variables and the $Q_{a}^{(ws_i)(\omega_i^\vee)}$ for the $\tilde{Q}$-variables).

These solutions of the generalized $QQ$-systems lead to the construction of remarkable cluster variables (the $Q$-variables) used to construct initial seeds for the following result.

\begin{theorem}\label{clsh}\cite{GHL} For the simply-laced types, the Grothendieck ring $K(\mathcal{O}^{sh})$ is isomorphic to (a completion of) a cluster algebra $\mathcal{A}_{\Gamma_\infty'}$, with an explicit quiver $\Gamma_\infty'$.
\end{theorem}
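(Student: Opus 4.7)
The plan is to extend Theorem \ref{mth} from the finite-dimensional subring $K(\mathcal{C}^{sh})$ to the full topological Grothendieck ring $K(\mathcal{O}^{sh})$ by enlarging both the quiver and the set of initial cluster variables. The natural candidates for the new cluster variables are the simple classes $Q_a^{w(\omega_i^\vee)}$ for $w\in W$, $i\in I$, $a\in\mathbb{C}^*$, constructed in \cite{FHgq} via the Weyl group action of \cite{FHw}. When $w$ is the identity, these are the classes $[L(\Psib_{i,a})]$ of positive prefundamental representations, which already serve as initial cluster variables in $\Gamma_\infty$. So $\Gamma_\infty'$ should be obtained by adjoining one $W$-orbit of vertices per fundamental coweight, arranged in an explicit combinatorial pattern encoded by the generalized $QQ$-systems.

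First, I would define $\Gamma_\infty'$ concretely: take the vertex set to be indexed by triples $(w,i,a)$ in a suitable $q$-shifted sublattice, and draw arrows by encoding the structure of the generalized $QQ$-relations. In particular, the existing subquiver $\Gamma_\infty$ is preserved on the $w=e$ slab and is glued to its ``reflections'' through compatibility arrows at the boundaries. Second, I would verify that the generalized $QQ$-system of \cite{FHgq}, which for each $w\in W$ and $i\in I$ reads schematically
\begin{equation*}
Q_{aq_i^{-1}}^{w(\omega_i^\vee)}\, Q_{aq_i}^{(ws_i)(\omega_i^\vee)} - Q_{aq_i}^{w(\omega_i^\vee)}\, Q_{aq_i^{-1}}^{(ws_i)(\omega_i^\vee)} = \text{monomial in }Q_b^{w(\omega_j^\vee)}\text{'s},
\end{equation*}
matches in each case a cluster exchange relation at the vertex $(w,i,a)$ of $\Gamma_\infty'$. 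This reduces the cluster-algebraic statement to identifying the two binomials in each mutation as the two sides of a $QQ$-relation (up to invertible factors that I would absorb as frozen variables), exactly as in the proof of Theorem \ref{mth} where mutation at $\Gamma_\infty$ produces $QQ^*$-relations.

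Third, I would show that after performing sufficiently many mutations from the initial seed, one reaches the classes of all fundamental finite-dimensional representations (via the relation $Q_{aq_i^{-1}}^{\omega_i^\vee} Q_{aq_i}^{\omega_i^\vee}\simeq [V_i(a)] Q_a^{s_i(\omega_i^\vee)}+\cdots$ arising from the standard $QQ$-system of Theorem \ref{qqo}), and hence of all simple classes in $\mathcal{C}^{sh}$ by Theorem \ref{mth}. Combined with the fact that any simple in $\mathcal{O}^{sh}$ is obtained by fusion from prefundamental representations and finite-dimensional ones, this should imply surjectivity of the map $\mathcal{A}_{\Gamma_\infty'}\to K(\mathcal{O}^{sh})$ at the level of topological bases.

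The main obstacle is twofold. On the combinatorial side, one must ensure that the ``Weyl group directions'' along $\Gamma_\infty'$ are internally consistent: the variables $Q_a^{w(\omega_i^\vee)}$ must be independent of the chosen reduced expression for $w$, which is exactly what the braid compatibility of generalized $QQ$-systems in \cite{FHgq} provides, but translating this into a well-defined quiver with commuting mutation sequences is delicate. On the analytic side, category $\mathcal{O}^{sh}$ contains modules of infinite length (e.g.\ the tensor products $L_{i,a}^+\otimes L_{j,b}^-$ mentioned in Section 4), so $K(\mathcal{O}^{sh})$ is only a topological ring; one therefore has to show that the completion of $\mathcal{A}_{\Gamma_\infty'}$ with respect to the grading by coweights matches the topology on $K(\mathcal{O}^{sh})$, using the positivity of structure constants on the basis of simples as a key input.
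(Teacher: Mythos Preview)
Your proposal misidentifies the quiver $\Gamma_\infty'$. You take the vertex set to be indexed by triples $(w,i,a)$ with $w$ ranging over the full Weyl group, giving essentially $|W|$ copies of $\Gamma_\infty$ glued along their boundaries. But the examples immediately following the theorem show this is not the construction: for $\mathfrak{sl}_2$ the quiver $\Gamma_\infty'$ is obtained from $\Gamma_\infty$ by reversing a \emph{single} arrow, so it has the same vertex set; for $\mathfrak{sl}_3$ it is the periodic quiver with three local ``quadrilateral'' defects inserted (three being $\ell(w_0)$), again on the same vertex set. The initial cluster variables are not all $Q_a^{w(\omega_i^\vee)}$ for all pairs $(w,a)$. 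Rather, as the spectral parameter runs along $q^{2\mathbb{Z}}$, the Weyl label moves once from $e$ to $w_0$ along a fixed reduced expression: for large $r$ the variable at column $r$ is a positive prefundamental $Q_{q^{2r}}^{\omega_i^\vee}=[L(\Psib_{i,q^{2r}})]$, for very negative $r$ it is a negative prefundamental $Q^{w_0(\omega_i^\vee)}$, and in a finite transitional window of width $\ell(w_0)$ one sees the intermediate $Q$-variables such as $[L(\tilde{\Psib}_{i,a})]$.

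This is not a cosmetic difference. Your enlarged vertex set would give a cluster algebra whose rank is too large by a factor of roughly $|W|$, with no chance of being isomorphic to $K(\mathcal{O}^{sh})$; the initial variables you propose are also algebraically dependent (for fixed $a$ the various $Q_a^{w(\omega_i^\vee)}$ are related by the $QQ$-systems themselves), which is forbidden in a cluster seed. The generalized $QQ$-relations are indeed the key input, but their role is to certify that the mutations \emph{across the transitional window} are valid exchange relations, not to create new vertices. With the correct seed in place, an infinite mutation sequence slides the window off to infinity and recovers the standard $\Gamma_\infty$ seed (all positive prefundamentals), embedding $\mathcal{A}_{\Gamma_\infty}\simeq K(\mathcal{C}^{sh})$ into $\mathcal{A}_{\Gamma_\infty'}$; the additional negative prefundamental classes $[L(\Psib_{i,a}^{-1})]$ then provide the missing topological generators of $K(\mathcal{O}^{sh})$. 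Your third step also conflates the ring isomorphism (which is the content of the theorem) with the stronger statement that cluster monomials are simple classes, which is Conjecture~\ref{clc} and remains open beyond $\mathfrak{sl}_2$.
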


\begin{example} For $\mathfrak{g} = \mathfrak{sl}_2$, the new quiver $\Gamma_\infty'$ is obtained from the older quiver $\Gamma_\infty$ by just inverting the direction of one arrow: 
$$\cdots\longrightarrow  \left[ L(\Psib_{1,q^2})\right] \longrightarrow \left[ L(\Psib_{1,1})\right] \longleftarrow \left[ L(\Psib_{1,q^{-2}}^{-1})\right] \longrightarrow \left[ L(\Psib_{1,q^{-4}}^{- 1})\right]\longrightarrow \cdots$$
The mutation at the vertex corresponding to $L(\Psib_{1,1})$ is 
$$\cdots\longrightarrow  \left[ L(\Psib_{1,q^2})\right] \longleftarrow \left[ L(\Psib_{1,1}^{-1})\right] \longrightarrow \left[ L(\Psib_{1,q^{-2}}^{-1})\right] \longrightarrow 
\left[ L(\Psib_{1,q^{-4}}^{-1})\right]\longrightarrow \cdots$$
as we have the remarkable $QQ$-relation 
\begin{equation}\label{qqexsl}[L(\Psib_{1,1})][L(\Psib_{1,1}^{-1})] = 1 + [-\omega_1]^2 [L(\Psib_{1,q^2})][L(\Psib_{1,q^{-2}})].\end{equation}
Here $Q_a^{\omega_1^\vee} = [L(\Psib_{1,a})]$ and $Q_a^{-\omega_1^\vee} = [L(\Psib_{1,aq^{-2}}^{-1})]$.
\end{example}

\begin{example}
For $\mathfrak{g} = \mathfrak{sl}_3$, the new quiver $\Gamma_{\infty}'$ is also built from the periodic quiver $\Gamma_{\infty}$, but it contains now $3$ quadrilaterals: 
$$\xymatrix{ 
\cdots \bullet\ar[rr]&                      & \bullet \ar[ld] & \bullet \ar[l]\ar[rrr] &                &                     & \bullet\ar[ld] &\bullet \ar[rr]\ar[l]& &\cdots \ar[ld]
\\           
\cdots \ar[r]        &\bullet\ar[rrr]\ar[lu]&                 &                        &\bullet\ar[lu]   & \bullet \ar[l]\ar[rrr]  &&& \bullet\ar[lu] \ar[r]&\cdots
}$$
The initial cluster variables are the $[L(\Psib_{1,q^{2r}})]$,  $[L(\Psib_{2,q^{2r - 1}})]$, $[L(\Psib_{1,q^{2s}}^{-1})]$,  $[L(\Psib_{2,q^{2s - 1}}^{-1})]$, with $r\geq 0$, $s\leq -2$, as well as 
$L(\tilde{\Psib}_{1,q^{-2}})$ and $L(\tilde{\Psib}_{1,q^{-4}})$ (recall (\ref{tps})). These are $Q$-variables as here $Q_a^{\omega_1^\vee} = [L(\Psib_{1,a})]$, $Q_a^{\omega_2^\vee - \omega_1^\vee} = [L(\Psib_{2,aq^{-1}}\Psib_{1,aq^{-2}}^{-1})]$, $Q_a^{-\omega_2^\vee} = [L(\Psib_{2,aq^{-3}}^{-1})]$, $Q_a^{\omega_2^\vee} = [L(\Psib_{2,a})]$, $Q_a^{\omega_1^\vee - \omega_2^\vee} = [L(\Psib_{1,aq^{-1}}\Psib_{2,aq^{-2}}^{-1})]$, $Q_a^{-\omega_1^\vee} = [L(\Psib_{1,aq^{-3}}^{-1})]$.\end{example}

The quantization of this structure is discussed in \cite{Pa}. 

\begin{remark}\label{remqqsh} By \cite{H0}, Equation (\ref{qqexsl}) generalizes to all finite types and the $QQ$-relation in Theorem \ref{qqo} also holds in $K_0(\mathcal{O}^{sh})$, with the simple classes $[L(\Psib_{i,a})] = Q_a^{\omega_i^\vee}$ and $[L(\tilde{\Psib}_{i,aq^{-2}})] = Q_a^{s_i(\omega_i^\vee)}$ which are $Q$-variables. This is one of the 
motivations for the next Conjecture.
\end{remark}

In general, we conjecture the following, which can be seen as a generalization of Theorem \ref{mth} to the category $\mathcal{O}^{sh}$.

\begin{conjecture}\label{clc}\cite{GHL} All cluster monomials in $\mathcal{A}_{\Gamma_\infty'}$ correspond to classes of simple objects in $\mathcal{O}^{sh}$ through the  isomorphism 
in Theorem \ref{clsh}.
\end{conjecture}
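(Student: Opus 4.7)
The natural strategy is to adapt the monoidal categorification techniques developed for Theorem \ref{mth} in \cite{HL2, KKOP, H} to the larger category $\mathcal{O}^{sh}$, using the fusion product from \cite{HZ, HPr} as the monoidal structure. The starting point is that the initial cluster variables of $\mathcal{A}_{\Gamma_\infty'}$ are, by construction in \cite{GHL} via the Weyl group action of \cite{FHw, FHgq}, equal to the simple classes $Q_a^{w(\omega_i^\vee)}$. Thus the initial seed already categorifies, and the task reduces to propagating simplicity through every finite sequence of mutations and then handling the completion.

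First I would establish a categorical exchange lemma valid in $\mathcal{O}^{sh}$: for each cluster variable $[L]$ to be mutated, produce a short exact sequence in $\mathcal{O}^{sh}$ whose image in $K(\mathcal{O}^{sh})$ realizes the cluster mutation relation, with all three terms simple. At the initial seed this is exactly what the generalized $QQ$-system of \cite{FHgq} provides (see Remark \ref{remqqsh}), since the exchange relations between the $Q^{w(\omega_i^\vee)}_a$ and $Q^{(ws_i)(\omega_i^\vee)}_a$ are precisely the generalized $QQ$-relations. The key technical input is a theory of R-matrices and normalized intertwiners between fusion products of simple objects of $\mathcal{O}^{sh}$, mirroring the Kang--Kashiwara--Kim framework used in the proof of Theorem \ref{mth}, together with the notion of a real simple module and the Hernandez--Leclerc exchange criterion.

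The induction would then follow the standard pattern: if $L_1$ and $L_2$ are real simple objects such that $[L_1][L_2]$ is a simple class, and one can exhibit
$$0 \to A \to L_1 \otimes L_2 \to B \to 0, \qquad 0 \to B \to L_2 \otimes L_1 \to A \to 0,$$
with $A,B$ simple, then every cluster mutation applied at $[L_1]$ in a compatible seed produces a new simple class. The quantization of \cite{Pa} serves as a bookkeeping device: reality of a candidate simple is detected by $t$-commutation with its cluster partners, and the positivity of structure constants established in \cite{H, HZ} rules out non-simple candidates.

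The main obstacle is twofold. First, $\mathcal{O}^{sh}$ contains infinite-dimensional simple objects, so head/socle and short exact sequence arguments require showing that fusion products of real simples remain of finite length with controlled composition factors; this extends the stability results of \cite{HZ} well beyond finite-dimensional representations. Second, Theorem \ref{clsh} realizes $K(\mathcal{O}^{sh})$ as a \emph{completion} of $\mathcal{A}_{\Gamma_\infty'}$, so infinite tails of mutations must be handled and cluster monomials arising only as limits of finite mutation sequences must still be identified with honest simple classes. Finally, extending beyond the simply-laced setting of Theorem \ref{clsh} would require importing the non simply-laced cluster-categorification advances of \cite{FHOO}, which is where I expect the hardest residual obstruction to lie.
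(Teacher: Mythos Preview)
The statement you are attempting to prove is a \emph{Conjecture}, not a theorem; the paper does not prove it and explicitly states immediately afterward that it is ``for the moment, only known for $\mathfrak{g} = sl_2$ in \cite{GHL}.'' So there is no ``paper's own proof'' to compare against, and any complete argument you produce would be a new result, not a reconstruction.

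Your outline is a plausible heuristic strategy, and you are candid that it is not a proof. The two obstacles you flag are precisely the reasons the conjecture remains open. The Kang--Kashiwara--Kim machinery you invoke relies on finite-length and rigidity properties that are not available in $\mathcal{O}^{sh}$: fusion products of simples need not have finite length in general, and the head/socle arguments underlying the exchange lemma in \cite{KKOP} have no established analogue for the Drinfeld-coproduct fusion product of \cite{HPr, H}. In particular, your ``categorical exchange lemma valid in $\mathcal{O}^{sh}$'' is not currently known beyond the first-step mutations governed by the $QQ$-relations, and extending it inductively is the heart of the problem, not a routine step. The $sl_2$ proof in \cite{GHL} exploits the explicit classification of simples and direct computation rather than a general categorification framework, so it does not supply a template for the induction you propose.
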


This general Conjecture is, for the moment, only known for $\mathfrak{g} = sl_2$ in \cite{GHL}.

\section{Truncated shifted quantum groups}

Let us now discuss truncated shifted quantum affine algebras, quotients $\mathcal{U}_{q,{\bf Z}}^\mu(\hat{\mathfrak{g}})$ of $\mathcal{U}_q^\mu(\hat{\mathfrak{g}})$ 
depending on a truncation (or flavor) parameter ${\bf Z} = (Z_i(z))_{i\in I}$ which is a collection of polynomials with constant term $1$. These truncations defined 
in \cite{FT} are closely related to the geometric context of Coulomb branches (in their $K$-theoretical version).
 
The truncated shifted quantum affine algebras appear also naturally from the point of view of quantum integrable systems (Section \ref{qims}). Indeed, the defining relations of the 
truncated shifted quantum affine algebras are expressed in terms of the polynomiality of certain $A$-operators $A_i^\pm(z)$ ($i\in I$) which have coefficients in the 
shifted quantum affine algebra. The $A$-operators can be also obtained as limits of transfer-matrices, not associated to infinite-dimensional representations as the $Q$-operators, 
but associated to finite-dimensional representations.

We will use methods related to the Baxter polynomiality in quantum integrable systems to approach the representation theory of truncated shifted quantum affine algebras of general types. 

\begin{example}Let $\mathfrak{g} = sl_2$. We have the $Q$-operator $\mathcal{Q}_u(z) = \mathcal{T}_{R_1^+}(z,u)$ 
obtained in \cite{FH} from the (infinite-dimensional) $\mathcal{U}_q(\hat{\mathfrak{b}})$-prefundamental representation $R_{i,1}^+$ of \cite{HJ} (it is a dual of $L_{i,1}^+$). 
We have the limit $u\rightarrow 0$ of the $Q$-operator computed in \cite[Section 5.3]{FH} in terms of Drinfeld generators $h_{-m}$ : 
$$(\mathcal{Q}_u(z))_{u\rightarrow 0} = T^-(z) = \text{exp}\left(\sum_{m > 0} \frac{h_{-m} z^m}{[m]_q (q^m + q^{-m})}\right).$$
In fact, the operator $A^-(z)$ of \cite{FT} is equal, up to a scalar function in $z$, to
$$A^-(z) = \tilde{k} \frac{T^-(z^{-1}q^2)}{T^-(z^{-1})},$$
with $\tilde{k}^2 = k$. It can also be obtained as a limit of a transfer-matrix in the following way. We have the $T$-operator
$$\mathcal{T}_u(z) = t_{V(1)}(z,u).$$
The Baxter $TQ$-relation (\ref{tqr}), in terms of transfer-matrices, reads
$$\mathcal{T}_u(z)\mathcal{Q}_u(zq) = u\tilde{k}^{-1}\mathcal{Q}_u(z q^{-1}) + u^{-1}\tilde{k} \mathcal{Q}_u(zq^3).$$
Then for $u\rightarrow 0$ we obtain
$$(u\mathcal{T}_u(z))_{u\rightarrow 0} = \tilde{k}\frac{T^-(zq^3)}{T^-(zq)}$$
which equals $A^-(z^{-1}q^{-1})$ up to a scalar function. 
The proof of the polynomiality of $Q$-operator in \cite{FH} can be extended to $T$-operators as established in \cite{FJMM}. 
Hence the polynomiality of the $A$-operators can be studied from these methods.
\end{example}

Recall that it is established in \cite{FH} that the eigenvalues of $Q$-operators on finite-dimensional representations 
can be described in terms of certain polynomials, generalizing Baxter's polynomials 
associated to the $XXZ$-model.  Moreover, this Baxter polynomiality implies the polynomiality of certain series of Cartan-Drinfeld elements acting on 
finite-dimensional representations \cite{FH}. For instance, it implies the following two Theorems.

\begin{theorem}\cite{H} Each truncation has a finite number of simple representation in the category $\mathcal{O}^{sh}$.
\end{theorem}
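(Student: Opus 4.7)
My plan is to exploit the fact, emphasized in the example preceding the theorem, that the defining conditions of a truncation $\mathcal{U}_{q,{\bf Z}}^\mu(\hat{\mathfrak{g}})$ are polynomiality requirements on the $A$-operators $A_i^\pm(z)$, with their leading/polynomial behaviour controlled by the flavour polynomials $Z_i(z)$. Since the simple objects of $\mathcal{O}^\mu$ are parametrized by $n$-tuples $\Psib = (\psi_i(z))_{i \in I}$ of rational fractions regular at $0$ with $\deg(\psi_i) = \alpha_i(\mu)$, the goal is to show that asking $L(\Psib)$ to factor through $\mathcal{U}_{q,{\bf Z}}^\mu(\hat{\mathfrak{g}})$ cuts out a finite set of admissible $\Psib$.

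First I would write $\psi_i(z) = P_i(z)/Q_i(z)$ with $P_i, Q_i$ coprime polynomials normalized by $P_i(0) = Q_i(0) = 1$, and compute the eigenvalue of $A_i^\pm(z)$ on the highest $\ell$-weight vector of $L(\Psib)$. Using the Drinfeld presentation of the shifted quantum affine algebra together with the Baxter-polynomiality machinery of \cite{FH} (and its extension from $Q$-operators to $T$-operators in \cite{FJMM}), this eigenvalue is an explicit rational function built from the $P_j$, $Q_j$ and their $q$-shifts. The truncation then forces this rational function to be a polynomial of the form prescribed by $Z_i(z)$ (up to a shift and an overall normalization) on the highest weight vector.

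Next I would extract from this the divisibility statement: the poles of the right-hand side must be cancelled by the numerator data of the $\psi_j$'s together with the $Z_j$'s, and similarly for the zeros. Combined with the fixed degree $\deg(P_i) - \deg(Q_i) = \alpha_i(\mu)$ (and using that $\mu$ must be codominant for non-trivial finite-dimensional pieces), these constraints force the roots of all $P_i$ and $Q_i$ to lie in a finite set of points determined by the roots of the $Z_j$, by $q$, and by $\mu$. Only finitely many tuples $\Psib$ are compatible with such a constraint, so only finitely many simple objects of $\mathcal{O}^{sh}$ can factor through the truncation.

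The main obstacle is the second paragraph: producing a clean enough explicit formula for the eigenvalue of $A_i^\pm(z)$ in terms of $\Psib$ and the truncation, and then reading off the divisibility conditions in a form that is genuinely restrictive. Concretely, one must control both the numerator and the denominator of $\psi_i(z)$ simultaneously, whereas only the ratio is directly visible on the highest weight vector; this is precisely where the Baxter-polynomial, Bethe-Ansatz-style analysis of \cite{FH, FJMM} alluded to in the preceding example is essential, playing the role, for $A$-operators and the truncation condition, that Theorem \ref{bthm} plays for $Q$-operators.
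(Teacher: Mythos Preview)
Your approach is essentially the one the paper indicates: the sentence immediately preceding the theorem says precisely that Baxter polynomiality implies the polynomiality of certain series of Cartan--Drinfeld elements, and that this in turn implies the theorem. The paper gives no further detail (the proof is in the cited reference \cite{H0}), so at the level of strategy you are aligned with it: the truncation forces the $A$-operators $A_i^\pm(z)$ to be polynomial of a shape dictated by ${\bf Z}$, and reading this on the highest $\ell$-weight line constrains $\Psib$ to a finite set.

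Two corrections are in order. First, the parenthetical ``using that $\mu$ must be codominant for non-trivial finite-dimensional pieces'' is misplaced: the theorem concerns all simples in $\mathcal{O}^{sh}$, most of which are infinite-dimensional, so codominance of $\mu$ is irrelevant. What you actually need, and what the polynomiality argument gives, is a bound on the \emph{individual} degrees of $P_i$ and $Q_i$ (not just on $\deg P_i-\deg Q_i=\alpha_i(\mu)$); this simultaneously cuts the set of admissible $\mu$ down to a finite range and bounds the number of roots and poles of each $\psi_i$. Second, the ``main obstacle'' you flag is milder than you fear: on the highest $\ell$-weight line the eigenvalues of the $A_i^\pm(z)$ are read off directly from the Gauss decomposition in \cite{FT} and are explicit rational expressions in the Drinfeld--Cartan data, so one does not need the full transfer-matrix analysis of \cite{FJMM} here; the Baxter-polynomial input from \cite{FH} is what makes the truncation relations themselves meaningful, rather than what is needed to evaluate them on a highest weight vector.
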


For a fixed ${\bf Z}$, we denote by $\mathcal{O}^{sh}_{\bf Z} =  \sum_\mu \mathcal{O}^\mu_{\bf Z}$ the sum for various coweights $\mu$ of the categories $\mathcal{O}^\mu_{\bf Z}$ of representations in $\mathcal{O}^\mu$ that descend to $\mathcal{U}_{q,{\bf Z}}^\mu(\hat{\mathfrak{g}})$.

\begin{remark}\label{const} From the defining relation of the truncated shifted quantum affine algebras, if $L(\Psib)$ is in 
$\mathcal{O}_{{\bf Z}}^\mu$, then for any $i\in I$ we have $Z_i(0)Z_i(\infty) = \Psi_i(0)\Psi_i(\infty)$ up to a sign, where 
$Z_i(\infty) = \text{lim}_{z\rightarrow \infty} z^{-\text{deg}(Z_i)}Z_i(z)$ and $\Psi_i(\infty) = \text{lim}_{z\rightarrow \infty} z^{-\text{deg}(\Psi_i)}\Psi_i(z)$. 
These conditions determine the constant part of $(\Psi_i(0))_{i\in I}$ (up to a $4$th root if $1$). Hence for clarity we will omit the constant parts in the following.
\end{remark}

\begin{theorem}\label{evth}\cite{H0, Z, HZ} Each simple representation $L(\Psib)$ in $\mathcal{O}^{sh}$ descends to a truncation (moreover, an explicit truncation  parameter ${\bf Z}$ 
so that $L(\Psib)$ belongs to $\mathcal{O}_{{\bf Z}}^{sh}$ can be computed from $\Psib$). 
\end{theorem}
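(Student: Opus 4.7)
The plan is to show that the polynomiality conditions defining the truncation $\mathcal{U}_{q,{\bf Z}}^\mu(\hat{\mathfrak{g}})$ — namely, the polynomiality of the operators $A_i^\pm(z)$ with leading/constant coefficients prescribed by ${\bf Z}$ — are satisfied by the action on $L(\Psib)$, and to read off ${\bf Z}$ from $\Psib$ along the way. First I would reduce the claim to an eigenvalue computation on the highest $\ell$-weight vector $v^+$ of $L(\Psib)$: since the $A_i^\pm(z)$ sit in the (shifted) Cartan-Drinfeld subalgebra, on the common generalized eigenspace decomposition (the $\ell$-weight decomposition) their eigenvalues are rational fractions in $z$, and the eigenvalue on $v^+$ is computable directly from $\Psib_i(z)$ using the defining series of $A_i^\pm(z)$ in terms of Drinfeld generators.

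Next I would invoke the Baxter-type polynomiality machinery. As spelled out in the $sl_2$ example of the excerpt, the operators $A_i^\pm(z)$ can be realized as (renormalized) limits of transfer-matrices $\mathcal{T}_{u,V}(z)$ and hence can be analyzed by the $Q$- and $T$-operator methods of \cite{FH} and \cite{FJMM}. On $v^+$, the rational eigenvalue of $A_i^\pm(z)$ collapses, after cancellation, to a polynomial; this polynomial is exactly the candidate $Z_i(z)$, and its degree matches the shift $\alpha_i(\mu)$ via the parametrization of simples in $\mathcal{O}^\mu$. The normalization constraint $Z_i(0)Z_i(\infty) = \Psi_i(0)\Psi_i(\infty)$ from Remark \ref{const} then fixes the relevant constant factor and confirms the identification of ${\bf Z}$ with the expression extracted from $\Psib$.

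The central step is then to propagate polynomiality from $v^+$ to the whole module. Using the $q$-character approach of \cite{FM, Fre}, every $\ell$-weight of $L(\Psib)$ is obtained from that of $v^+$ by multiplication by inverse root monomials $A_{i,a}^{-1}$ as in (\ref{wa}); these root monomials alter the eigenvalue of $A_i^\pm(z)$ by factors whose poles are cancelled, weight space by weight space, thanks to the fact that the total $q$-character lies in the subring characterized by Frenkel-Mukhin (and the Weyl group invariance of Theorem from \cite{FHw}). Thus if the eigenvalue on $v^+$ is a polynomial divisible by $Z_i(z)$, so is the eigenvalue on every $\ell$-weight vector. Since $L(\Psib)$ is simple, this is enough to conclude that the action of $\mathcal{U}_q^\mu(\hat{\mathfrak{g}})$ factors through $\mathcal{U}_{q,{\bf Z}}^\mu(\hat{\mathfrak{g}})$.

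The main obstacle, I expect, is precisely the last step: controlling the rational functions uniformly across all the (possibly infinitely many) $\ell$-weight spaces of $L(\Psib)$ and ruling out accidental poles. For finite-dimensional $L(\Psib)$ this is handled by the Baxter polynomiality theorem \ref{bthm}; for genuinely infinite-dimensional simples in $\mathcal{O}^{sh}$ (for instance those extending prefundamental representations to anti-codominant shifts) one needs the extension of this polynomiality to the infinite-dimensional setting, which is exactly the content of the limit/fusion-product arguments developed in \cite{Z, HZ}. Once this is in place, the explicit formula for ${\bf Z}$ is just the highest-$\ell$-weight computation, and the proof is complete.
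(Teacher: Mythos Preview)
The paper is a survey and does not give a proof of Theorem \ref{evth}; it attributes the result to \cite{H0, Z, HZ} and only sketches the strategy in the paragraph preceding the statement: Baxter polynomiality of transfer-matrices on the quantum space implies polynomiality of the Cartan--Drinfeld series, hence of the $A$-operators, hence the action factors through a truncation. Your overall plan is in line with this, and you correctly locate the real difficulty in the infinite-dimensional case handled by \cite{Z, HZ}.

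However, two of your intermediate steps do not work as written. First, the eigenvalue of $A_i^\pm(z)$ on $v^+$ does not in general ``collapse to a polynomial'' that can be taken as $Z_i(z)$: the components $\Psi_i(z)$ are rational, and by Theorem \ref{H0} the truncation parameter ${\bf Z}$ must satisfy $\Psib \preceq {\bf Z}$, so it sits strictly above $\Psib$ whenever $\Psib$ is not already a tuple of polynomials. The explicit ${\bf Z}$ constructed in \cite{H0, HZ} is computed from $\Psib$ but is not simply the highest-$\ell$-weight datum; it must be chosen large enough to accommodate all the $\ell$-weights of $L(\Psib)$, and for infinite-dimensional simples this bound is exactly what the theta-series control of \cite{Z} provides. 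Second, your propagation argument via the Frenkel--Mukhin subring is not a valid mechanism: that characterization constrains which Laurent polynomials in the $Y_{j,b}$ can occur as $q$-characters, but it says nothing about polynomiality \emph{in the spectral variable $z$} of the $A$-operator eigenvalue on an individual $\ell$-weight space. Multiplying an $\ell$-weight by $A_{i,a}^{-1}$ genuinely introduces new poles in the Cartan eigenvalues, and there is no cancellation forced by the $q$-character lying in the image of $\chi_q$.

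In the actual arguments one does not propagate from $v^+$ at all. One treats $L(\Psib)$ as the quantum space and proves polynomiality of the $A$-operators on the whole module at once, exactly as in the $sl_2$ example immediately before Theorem \ref{evth}: $A^-(z)$ arises as the limit $(u\,\mathcal{T}_u(z))_{u\to 0}$, so its polynomiality follows from that of the $T$-operator on $W$, extended beyond finite-dimensional $W$ by the methods of \cite{Z, HZ}. Your sketch should therefore drop the $v^+$-plus-propagation structure and instead argue directly that the (renormalized) transfer-matrices act polynomially on $L(\Psib)$, with the explicit ${\bf Z}$ emerging from the degree bounds in that argument rather than from the highest $\ell$-weight alone.
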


\begin{example} (i) For a truncation parameter ${\bf Z} = (Z_i(z))_{i\in I}$, let $\lambda = \sum_{i\in I}\text{deg}(Z_i)\omega_i^\vee$. 
Then $\mathcal{O}^\lambda_{\bf Z}$ contains a unique simple object which is $L({\bf Z})$. It is a one-dimensional module.

(ii) Let ${\bf Z} = \Psib_{i,a}$. Then $\mathcal{O}^{\omega_i^\vee - \alpha_i^\vee}_{\bf Z}$ contains a unique simple object which is 
precisely $L(\tilde{\Psib}_{i,aq_i^{-2}})$ the simple representation that occurs in the $QQ$-system in Theorem \ref{qqo}.
\end{example}

A partial ordering is defined in \cite{H} : $\Psib\preceq \Psib'$ if and only if $\Psib'\Psib^{-1}$ is a monomial in the 
$\Lambda_{i,a} = \Psib_{i,aq_i}(\tilde{\Psib}_{i,aq_i^{-1}})^{-1}$, $i\in I, a\in\mathbb{C}^*$.

\begin{theorem}\label{H0} If $L(\Psib)$ is in $\mathcal{O}_{\bf Z}^{sh}$, then $\Psib \preceq {\bf Z}$.
\end{theorem}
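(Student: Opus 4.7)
The strategy is to extract the bound $\Psib \preceq {\bf Z}$ directly from the defining relations of the truncation $\mathcal{U}_{q,{\bf Z}}^\mu(\hat{\mathfrak{g}})$, which impose polynomiality of the $A$-operators $A_i^\pm(z)$ with degrees and leading/trailing coefficients controlled by ${\bf Z}$. On the highest $\ell$-weight vector $v$ of $L(\Psib)$ these $A$-operators must therefore act as scalar polynomials in $z$ (resp.\ $z^{-1}$), and I would let $\{a_{j,k}\}_{k}$ denote their roots with multiplicity, node by node. The degree count on each node $j$ is then determined by ${\bf Z}$ and $\Psib$ through the truncation conditions, which, combined with Remark \ref{const}, fix everything up to root data.

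The heart of the argument is to rewrite the relations of $\mathcal{U}_q^\mu(\hat{\mathfrak{g}})$ that define the $A$-operators as an identity expressing $Z_i(z)\Psib_i(z)^{-1}$, evaluated on $v$, as a product of shifted evaluations of the eigenvalue polynomials of the $A_j^\pm(z)$, with exponents governed by the Cartan matrix in the same combinatorial pattern as the root monomials in (\ref{wa}). Factoring each polynomial eigenvalue into linear factors, the right-hand side becomes a product over root indices $(j,k)$ of elementary pieces; by the formula (\ref{tps}) and the definition $\Lambda_{j,a} = \Psib_{j,aq_j}\tilde{\Psib}_{j,aq_j^{-1}}^{-1}$, each such elementary piece is precisely a single $\Lambda_{j,a_{j,k}^{\sharp}}$ for an appropriate shift $a_{j,k}^{\sharp}$ of the root $a_{j,k}$. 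One therefore obtains an equality ${\bf Z}\Psib^{-1} = \prod_{j,k}\Lambda_{j,a_{j,k}^{\sharp}}$ with non-negative integer multiplicities, which is exactly $\Psib \preceq {\bf Z}$.

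The main obstacle is the combinatorial bookkeeping that identifies each elementary factor with a single $\Lambda_{j,a}$: the defining relation of the $A$-operators involves exactly the Cartan-matrix data that appears in the root monomials $A_{i,a}$ of (\ref{wa}), and the definition of $\Lambda_{i,a}$ via the $QQ$-system partner $\tilde{\Psib}_{i,a}$ is engineered to match this pattern under the Baxter-type substitution of Theorem \ref{gbt}. Carrying out the match cleanly in the non-simply-laced cases, where $q_i$ depends on $i$ and (\ref{tps}) splits into three cases, requires care with the shifts by $q^{\pm 1}$ and with which of the two prefundamental-type variables ($\Psib_{j,a}$ or $\tilde{\Psib}_{j,a}$) each factor belongs to. Once the match is established, non-negativity of the multiplicities is immediate from the fact that the $A_j^\pm$ act as honest polynomials, not as rational functions, so no cancellation of roots can produce negative multiplicities.
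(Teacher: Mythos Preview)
The survey does not actually supply a proof of this theorem: it is attributed to reference \cite{H0}, and the only indication given in the text is the sentence preceding the two theorems, that ``Baxter polynomiality implies the polynomiality of certain series of Cartan--Drinfeld elements acting on finite-dimensional representations.'' The short proof of Proposition~\ref{extr} immediately afterwards confirms that the intended mechanism is precisely the eigenvalues of the $A$-operators on a simple module.

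Your strategy is therefore aligned with what the paper sketches and with the actual argument in \cite{H0}: the truncation ideal is generated by the coefficients forcing the $A_i^\pm(z)$ to be polynomial of the prescribed degrees, and on the highest $\ell$-weight line of $L(\Psib)$ this yields scalar polynomials whose roots, once inserted into the GKLO-type identity relating $\phi_i^\pm(z)$ to $Z_i(z)$ and the $A_j^\pm(z)$, produce exactly a factorization of ${\bf Z}\Psib^{-1}$ as a product of $\Lambda_{j,a}$'s. One small caution: the identity you invoke expressing $Z_i(z)\Psib_i(z)^{-1}$ as a product over \emph{all} $j$ is not a single defining relation; the GKLO relation at node $i$ involves only $A_i$ (in numerator and denominator with a $q_i$-shift) and $Z_i$, while the neighbouring $A_j$ enter through the relations among the $A$-series themselves. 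So the ``product over $(j,k)$'' picture emerges only after combining the full system node by node. Once that bookkeeping is done, the identification of each linear factor with a single $\Lambda_{j,a}$ is indeed governed by the same Cartan-matrix pattern as the root monomials in (\ref{wa}), and non-negativity of the exponents follows from polynomiality as you say.
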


\noindent Using the same tools as for the proof of these results, one obtains the following.

\begin{prop}\label{extr} Let ${\bf Z}$, ${\bf Z'}$ truncation parameters (that is collections of polynomials as above).

(i) $L({\bf Z'})$ is in $\mathcal{O}_{\bf Z}^{sh}$ if and only if ${\bf Z'}\preceq {\bf Z}$.

(ii) In this case, we have $\mathcal{O}_{\bf Z'}^{sh}\subset \mathcal{O}_{\bf Z}^{sh}$.
\end{prop}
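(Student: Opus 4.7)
The plan is to prove (ii) first, and then derive (i) as a corollary. For (i), the $(\Rightarrow)$ direction is just Theorem \ref{H0} specialized to $\Psib = {\bf Z}'$: if $L({\bf Z}')\in \mathcal{O}^{sh}_{\bf Z}$, then ${\bf Z}'\preceq {\bf Z}$. The $(\Leftarrow)$ direction follows from (ii), once we observe that $L({\bf Z}')$ is the unique one-dimensional simple object in $\mathcal{O}^{\lambda'}_{{\bf Z}'}\subset \mathcal{O}^{sh}_{{\bf Z}'}$, where $\lambda' = \sum_{i\in I} \text{deg}(Z'_i)\omega^\vee_i$, per the Example following Theorem \ref{evth}; applying (ii) then places it in $\mathcal{O}^{sh}_{{\bf Z}}$.

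The heart of (ii) is the algebraic assertion that ${\bf Z}'\preceq {\bf Z}$ forces, for every coweight $\mu$, an inclusion of defining ideals inside $\mathcal{U}^\mu_q(\hat{\mathfrak{g}})$ and hence a surjection
$$\mathcal{U}^\mu_{q,{\bf Z}}(\hat{\mathfrak{g}}) \twoheadrightarrow \mathcal{U}^\mu_{q,{\bf Z}'}(\hat{\mathfrak{g}}).$$
Granted this, every representation of $\mathcal{U}^\mu_{q,{\bf Z}'}$ lifts to a representation of $\mathcal{U}^\mu_{q,{\bf Z}}$, so $\mathcal{O}^\mu_{{\bf Z}'}\subset \mathcal{O}^\mu_{{\bf Z}}$, and summation over $\mu$ gives (ii). To prove the ideal inclusion, one writes ${\bf Z} = {\bf Z}' \prod_{(i,a)} \Lambda_{i,a}^{n_{i,a}}$ with $n_{i,a}\geq 0$ and argues one factor at a time. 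The content is that, on the $A$-operator side of the truncation relations of \cite{FT}, multiplying a truncation parameter by $\Lambda_{i,a} = \Psib_{i,aq_i}(\tilde{\Psib}_{i,aq_i^{-1}})^{-1}$ only inserts polynomial factors in the constraints: since each $\Psib_{j,b}$ has constant term $1$, the normalization $Z_i(0)Z_i(\infty) = \Psi_i(0)\Psi_i(\infty)$ of Remark \ref{const} is preserved, and the polynomiality of $A_i^\pm(z)$ at level ${\bf Z}$ becomes automatic once it holds at the stronger level ${\bf Z}'$.

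The main obstacle is precisely this last step: translating the combinatorial $\Lambda_{i,a}$-shift on $\Psib$-data into the algebraic operation on the defining $A$-operator relations inside $\mathcal{U}^\mu_q(\hat{\mathfrak{g}})$. A clean framework should be provided by the root monomials $A_{i,a}$ of (\ref{wa}) and by the generalized $QQ$-system of Theorem \ref{qqo}, under which $\Lambda_{i,a}$ acts as a monomial shift compatible with the truncation ideal filtration; the Baxter polynomiality arguments of \cite{FH, FJMM} invoked earlier in the section also apply here. An alternative, more representation-theoretic route is to prove (ii) first for simple modules, via Theorem \ref{H0} applied to the constituents of a composition series, and then to extend to general objects using the finite-length property of $\mathcal{O}^{sh}_{{\bf Z}}$ that follows from Theorem \ref{evth}; but this bootstrap ultimately rests on the same comparison of truncation ideals and does not bypass it.
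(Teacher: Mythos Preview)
Your argument for (ii) is essentially the paper's: the key step is that when ${\bf Z} = {\bf Z'}\cdot\Lambda$ with $\Lambda$ a product of $\Lambda_{i,a}$'s, the truncation relations for ${\bf Z'}$ imply those for ${\bf Z}$, yielding the surjection $\mathcal{U}^\mu_{q,{\bf Z}}\twoheadrightarrow\mathcal{U}^\mu_{q,{\bf Z'}}$. The paper states this in one line and does not go through the $QQ$-system or the Baxter polynomiality machinery you invoke; those references are extraneous here, and your ``main obstacle'' paragraph over-engineers what is in fact a direct check on the $A$-operator polynomiality conditions.

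Where you genuinely differ is in (i). You prove $(\Rightarrow)$ by invoking Theorem~\ref{H0} and $(\Leftarrow)$ by first establishing (ii) and then pulling $L({\bf Z'})$ from $\mathcal{O}^{sh}_{{\bf Z'}}$ into $\mathcal{O}^{sh}_{{\bf Z}}$. The paper instead handles both directions of (i) directly and independently of (ii): since the components of ${\bf Z'}$ are polynomials, $L({\bf Z'})$ is one-dimensional, so the $A$-operators act by scalars and the question of whether $L({\bf Z'})$ descends to $\mathcal{U}^\mu_{q,{\bf Z}}$ reduces to a bare eigenvalue computation, which is equivalent to ${\bf Z'}\preceq{\bf Z}$. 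Your route is valid but less economical; the paper's direct eigenvalue check avoids any appeal to Theorem~\ref{H0} or to (ii), and treats (i) and (ii) as logically independent consequences of the same underlying observation about truncation ideals.
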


\begin{proof}
(i) follows from the fact that, as the components of ${\bf Z'}$ are polynomials, $L({\bf Z'})$ is one-dimensional. So this is just a statement on the eigenvalues of the $A$-operators. 

(ii) If ${\bf Z'} = {\bf Z} \Lambda$ with $\Lambda$ a product of various of $\Lambda_{i,a}$, the truncation relations for ${\bf Z'}$ imply the truncation relations for ${\bf Z}$.
\end{proof}

\section{Representations of truncations} This last Section is more technical as we discuss Conjectures. For the non-simply-laced types, we proposed in \cite{H0} a parametrization of simple representations in $\mathcal{O}_{\bf Z}^{sh}$ in terms of specializations of commutative analogs of deformed $W$-algebras.

\begin{remark} (1) The shifted quantum affine algebras have rational analogs called shifted Yangians.
For the simply-laced types, simple representations of truncated shifted Yangians have been parametrized in terms of Nakajima monomial crystals \cite{ktwwy2} (this parametrization can be understood in the context of symplectic duality, more precisely from the equivariant version of the Hikita conjecture for the symplectic duality formed by an affine Grassmannian slice and a quiver variety). As for ordinary (non shifted) quantum groups \cite{GTL}, the representations of shifted Yangians and shifted quantum affine algebras are expected to be closely related.
This is true at the level of truncations by \cite{VV} and the parametrization also holds for the simply-laced truncated shifted quantum 
affine algebras. Note that a bijection is given in \cite{nw} between certain simple representations of a non simply-laced quantized Coulomb branch and those for the simply-laced types. 

(2) The prefundamental representations have also been recently studied by using other approaches, for example in \cite{Ne, VVO}. 
\end{remark}

In our approach to general types, we use a limit obtained from interpolating $(q,t)$-characters. 
The latter were defined by Frenkel and the speaker as a commutative 
incarnation of Frenkel-Reshetikhin deformed $\mathcal{W}$-algebras \cite{Frw} which appeared in the context of the geometric Langlands program 
\cite{FH0, FHR}. They lead to an interpolation between the Grothendieck ring 
$\text{Rep}(\mathcal{U}_q(\hat{\mathfrak{g}}))$ of finite-dimensional representations of
 $\mathcal{U}_q(\hat{\mathfrak{g}})$ (at $t = 1$) and the Grothendieck ring $\text{Rep}(\mathcal{U}_t({}^L\hat{\mathfrak{g}}))$ 
of finite-dimensional representations of the Langlands dual
 quantum affine algebra $\mathcal{U}_t({}^L\hat{\mathfrak{g}})$ (at $q = \epsilon$ a certain root of $1$): 
$$\xymatrix{ &\mathfrak{K}_{q,t}\ar@{-->}[dr]^{q = \epsilon}\ar@{-->}[dl]^{t = 1}&   
\\ \text{Rep}(\mathcal{U}_q(\hat{\mathfrak{g}})) & &  \text{Rep}(\mathcal{U}_t({}^L\hat{\mathfrak{g}}))
}$$
Here $\mathfrak{K}_{q,t}$ is the ring of interpolating $(q,t)$-characters. It is generated by fundamental interpolating $(q,t)$-characters
$X_{i,q,t}(a)$, $i\in I, a\in\mathbb{C}^*$ that can be computed by a deformation of an 
algorithm of Frenkel-Mukhin \cite{FM}.

To describe our parametrization of simple representations in $\mathcal{O}_{{\bf Z}}^{sh}$, we found in \cite{H0} that it is relevant to use a mixed specialization of interpolating $(q,t)$-characters (with $t = 1$ for variables but $q = \epsilon$ for coefficients). By our specialization process, for each $i\in I, a\in \mathbb{C}^*$, we obtain a fundamental 
$$\chi_{i,a} \in \mathbb{Z}[Z_{j,aq^r}^{\pm 1}]_{j\in I, r\in\mathbb{Z}}.$$
Informally, this $\chi_{i,a}$ can be seen as obtained from the $q$-character of a fundamental representation of the Langlands dual twisted quantum affine algebra $\mathcal{U}_t({}^L\hat{\mathfrak{g}})$, 
where each term has been replaced through interpolation by a monomial which occurs in the $q$-character of a representation of $\mathcal{U}_q(\hat{\mathfrak{g}})$. $\chi_{i,a}$ can be computed algorithmically.

\begin{example}\label{sexb} (i) For $\mathfrak{g}$ of simply-laced type, $\chi_{i,a}$ is just the $q$-character of the finite-dimensional 
fundamental representation $V_i(a)$ of $\mathcal{U}_q(\hat{\mathfrak{g}})$, where we have set $Z_{i,a} = Y_{i,a}$.

(ii) Let $\mathfrak{g}$ in type $B_2$ with $r_1 = 2$ and $r_2 = 1$. Then we have
$$\chi_{1,a} = Z_{1,a} + Z_{1,aq^4}^{-1}Z_{2,aq^2} + Z_{2,aq^4}^{-1}Z_{1,aq^2} + Z_{1,aq^6}^{-1},$$
$$\chi_{2,a} = Z_{2,a} + Z_{2,aq^2}^{-1}Z_{1,a}Z_{1,aq^2} + Z_{1,a}Z_{1,aq^6}^{-1} Z_{2,aq^2}^{-1}Z_{2,aq^4} 
+ Z_{1,aq^2}Z_{1,aq^4}^{-1} +  Z_{1,aq^6}^{-1}Z_{1,aq^4}^{-1}Z_{2,aq^4} + Z_{2,aq^6}^{-1}.$$
The respective number of terms, $4$ and $6$, are the respective dimensions of the fundamental representations of the twisted quantum 
affine algebra $\mathcal{U}_q(A_3^{(2)})$ (Langlands dual to $\mathcal{U}_q(B_2^{(1)})$).
\end{example}

\begin{remark} A priori $\chi_{i,a}$ may have infinitely many terms, but we conjecture that the number of terms is finite, that is the 
algorithm to compute $\chi_{i,a}$ stops.
\end{remark}

Now consider a truncation parameter ${\bf Z} = (Z_i(z))_{i\in I}$. 
Then we define
$$\chi({\bf Z}) = \prod_{i\in I, a\in\mathbb{C}^*}\chi_{i,a^{-1}}^{u_{i,a}},$$ 
where $u_{i,a}$ is the multiplicity of $a^{-1}$ as a root of $Z_i$. For $M$ a monomial in $\chi({\bf Z})$, we set $\Psib_M = \prod_{i\in I, a\in\mathbb{C}^*}\Psib_{i,a^{-1}}^{z_{i,a}}$ where $z_{i,a}$ is the power of $Z_{i,a}$ in $M$ ($\Psib_M$ is defined up to an explicit constant, as discussed in Remark \ref{const}). In this situation, we say that $\Psib_M$ comes from $\chi({\bf Z})$.

Now we reformulate and adjust the conjecture in \cite{H0}.

\begin{conjecture}\label{ctru} (i) The representation $L(\Psib)$ belongs to $\mathcal{O}_{\bf Z}^{sh}$ if $\Psib$ comes from $\chi({\bf Z})$ .

(ii) The representation $L(\Psib)$ belongs to $\mathcal{O}_{\bf Z}^{sh}$ only if there is a finite sequence $({\bf Z}_0 = {\bf Z} , {\bf Z}_1,\cdots , {\bf Z}_N)$ of truncation parameters such that ${\bf Z}_{k+1}$ comes from $\chi({\bf Z}_k)$ for all $0\leq k < N$ and $\Psib$ comes from $\chi({\bf Z}_N)$.
\end{conjecture}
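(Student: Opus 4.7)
The plan is to attack the two implications separately, reducing both to the Langlands-dual simply-laced situation where the analogous parametrization of simple modules by Nakajima monomial crystals is known for truncated shifted Yangians by \cite{ktwwy2} and, after transport along the comparison of \cite{VV}, for truncated shifted quantum affine algebras. The key structural observation is that when $\mathfrak{g}$ is simply-laced, Example \ref{sexb}(i) gives that $\chi({\bf Z})$ is literally the $q$-character of a standard module, so monomials of $\chi({\bf Z})$ correspond exactly to the monomial-crystal labels produced by \cite{ktwwy2, VV}, and the conjecture becomes a theorem. For general types, $\chi({\bf Z})$ is a product of specializations of interpolating $(q,t)$-characters, encoding Langlands duality between $\mathcal{U}_q(\hat{\mathfrak{g}})$ and $\mathcal{U}_t({}^L\hat{\mathfrak{g}})$; the task is to show that this combinatorial object still controls the truncation descent.

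For part (i), I would induct on the depth of the monomial $M$ of $\Psib$ in $\chi({\bf Z})$ with respect to the partial order $\preceq$. The base case is the dominant monomial ${\bf Z}$ itself, for which $\Psib_M = \Psib_{\bf Z}$ and $L(\Psib_{\bf Z})$ is the one-dimensional module of the example after Theorem \ref{evth}, which lies in $\mathcal{O}_{{\bf Z}}^{sh}$ by construction. For the inductive step, two consecutive monomials of $\chi({\bf Z})$ differ by a factor $A_{i,a}^{-1}$ (in the sense of \eqref{wa}), which corresponds, on the $\Psib$ side, to multiplication by $\Lambda_{i,a}^{-1} = \Psib_{i,aq_i}^{-1} \tilde{\Psib}_{i,aq_i^{-1}}$. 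The statement to prove is then that if $L(\Psib)$ descends to ${\bf Z}$ and $M A_{i,a}^{-1}$ still occurs in $\chi({\bf Z})$, then $L(\Psib \Lambda_{i,a}^{-1})$ also descends. I would approach this using the characterization of the truncation by polynomiality of the $A$-operators $A_i^\pm(z)$ realized as limits of transfer-matrices (as in the $\mathfrak{sl}_2$ example), combined with the fusion product and Baxter polynomiality techniques of \cite{FH, FJMM}: multiplication by $\Lambda_{i,a}^{-1}$ corresponds to a controlled fusion with an auxiliary module that does not spoil the polynomiality of $A_i^-(z)$ relative to ${\bf Z}$.

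For part (ii), I would start from Theorem \ref{H0}, which gives $\Psib \preceq {\bf Z}$, and construct the chain $({\bf Z}_0, \ldots, {\bf Z}_N)$ by iterated descent. Given ${\bf Z}_k$, if $\Psib$ already comes from $\chi({\bf Z}_k)$ I stop; otherwise I pick a monomial $M$ of $\chi({\bf Z}_k)$ that is still $\succeq \Psib$ but already encodes some of the required $\Lambda_{i,a}^{-1}$-factors, set ${\bf Z}_{k+1} := \Psib_M$, and note that $L({\bf Z}_{k+1})$ lies in $\mathcal{O}_{{\bf Z}_k}^{sh}$ by part (i), so Proposition \ref{extr} gives $\mathcal{O}_{{\bf Z}_{k+1}}^{sh} \subset \mathcal{O}_{{\bf Z}_k}^{sh}$ and hence $L(\Psib) \in \mathcal{O}_{{\bf Z}_{k+1}}^{sh}$ by iterated application of Theorem \ref{H0} and Theorem \ref{evth}. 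Termination would follow from a well-founded descent argument: each ${\bf Z}_{k+1}$ strictly decreases the number of $\Lambda_{i,a}$-factors in ${\bf Z}_k \Psib^{-1}$, and this quantity is finite and nonnegative.

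The main obstacle will be the non-simply-laced case, where three interlocked difficulties arise. First, the finite termination of the algorithm computing $\chi_{i,a}$ is itself only conjectural (see the remark before the conjecture), and the whole scheme above presupposes that $\chi({\bf Z})$ is a finite expression. Second, the translation of the monomial-crystal parametrization across Langlands duality has to be matched on the nose with the $A$-operator polynomiality conditions defining the truncation; this is the content of the mixed $(t = 1, q = \epsilon)$ specialization and seems to require a non-simply-laced analogue of the Hikita-type geometry underlying \cite{ktwwy2}, or a purely algebraic substitute via the Weyl group action on $\Pi$ from \cite{FHw} and the generalized $QQ$-systems of \cite{FHgq}. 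Third, completeness of the list in part (ii) (that no $L(\Psib)$ in $\mathcal{O}_{\bf Z}^{sh}$ is missed by the iterated $\chi$-construction) will likely require a dimension-counting argument in each weight space of $\mathcal{O}_{\bf Z}^{sh}$, which in turn needs a character formula that is currently only accessible in the simply-laced case; bridging this gap is, in my view, the heart of the conjecture.
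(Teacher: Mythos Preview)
The statement you are attempting to prove is labeled in the paper as a \emph{Conjecture}, not a theorem; the paper contains no proof of it. What the paper does provide is evidence and context: the simply-laced case is settled (via \cite{ktwwy2} transported by \cite{VV}), Theorem \ref{evth} is cited as ``a main evidence'' for part (i), and Example \ref{cex} is given precisely to show why the naive converse of (i) fails and the iterated formulation in (ii) is needed. So there is no paper proof to compare your proposal against; your write-up is a research outline for an open problem, and you yourself flag the three main obstacles at the end.

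Beyond the obstacles you already identify, your scheme for (ii) has a structural gap. When you write ``pick a monomial $M$ of $\chi({\bf Z}_k)$ that is still $\succeq \Psib$ \ldots\ and set ${\bf Z}_{k+1} := \Psib_M$'', you need $\Psib_M$ to be a truncation parameter, i.e.\ a tuple of \emph{polynomials}, which forces $M$ to be dominant (all $Z_{j,b}$-exponents nonnegative). In the non-simply-laced setting the monomials of $\chi({\bf Z}_k)$ are governed by the Langlands-dual combinatorics, and there is no a priori guarantee that a dominant monomial strictly between ${\bf Z}_k$ and $\Psib$ exists at each stage; Example \ref{cex} in the paper is exactly a situation where the direct jump fails and an intermediate dominant monomial must be found, but the existence of such intermediates in general is part of what is being conjectured, not something you can assume. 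Your termination argument (``strictly decreases the number of $\Lambda_{i,a}$-factors'') therefore presupposes the very descent whose availability is in question. Similarly, your inductive step for (i) --- that fusion with the auxiliary module attached to $\Lambda_{i,a}^{-1}$ preserves polynomiality of the $A$-operators whenever $M A_{i,a}^{-1}$ occurs in $\chi({\bf Z})$ --- is the heart of the matter and is not reducible to the Baxter polynomiality results of \cite{FH, FJMM} without new input linking the interpolated $(q,t)$-combinatorics to the truncation ideal.
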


\begin{remark} (1) For $\Psib = {\bf Z'}$ a truncation parameter, (i) is true by Proposition \ref{extr} as the condition implies that ${\bf Z'}\preceq {\bf Z}$.

(2) The statement in (i) is equivalent to the converse of (ii). But the converse of (i) is untrue in general (see the counter example \ref{cex}). That is why (ii) is reformulated in comparison to \cite{H0}. Note that, for the simply-laced types, (i) and its converse have been established (at least for truncated shifted Yangians) by \cite{ktwwy2} 
in the context of categorification of action of Lie algebras. 
This implies that for simply-lace types, the converse of (ii) is (i). For these types, $\chi({\bf Z})$ is the $q$-character of a standard module, and the set of its monomials is a monomial crystal which 
is a product of fundamental monomial crystals in the sense of Nakajima \cite{Nc} (see \cite{HNc} for a generalization). 
The formulation can be simplified: $L(\Psib_M)$ belongs to $\mathcal{O}_{\bf Z}^{sh}$ if and only if $\Psib_M$ comes from $\chi({\bf Z})$. In particular, ${\bf Z}' = {\Psib_{M'}} \preceq {\bf Z} = {\Psib_M}$ if and only if $M'$ dominant occurs in $\chi_M$ $q$-characters of a standard module ($M'$ dominant means that the powers of the variables $Y_{j,b}$ are positive). 
This implies the following: consider a dominant $M'\leq M$ (for the Nakajima partial ordering, that is $M'M^{-1}$ is a product of various $A_{j,b}$ as in (\ref{wa})). Then $M'$ occurs in the $q$-character of the standard module associated to $M$. 

(3) For general types, from the discussions above, the condition (ii) can be replaced by:  there is ${\bf Z'}\preceq {\bf Z}$ truncation parameter such that $\Psib$ comes from a monomial in $\chi({\bf Z'})$. 
This formulation is less explicit as we have to determine the truncation parameters that satisfy ${\bf Z'}\preceq {\bf Z}$.

(4) An alternative formulation of Conjecture \ref{ctru} is given in \cite{VV} in terms of crystals.

(5) Theorem \ref{evth} is a main evidence as it gives a truncation parameter as predicted by Conjecture \ref{ctru}.
\end{remark}

\begin{example} We continue Example \ref{sexb} in type $B_2$.

(1) First let us set $\lambda = \omega_2^\vee$, $Z_1(z) = 1$, $Z_2(z) = 1 - z$. We list the parameters of the $6$ simple modules in $\mathcal{O}_{\bf Z}^{sh}$:

$\mu = \omega_2^\vee$: $\Psib_{2,1}$ ; $\mu = 2\omega_1^\vee - \omega_2^\vee$: $\Psib_{1,q^{-2}}\Psib_{1,1}\Psib_{2,q^{-2}}^{-1}$ ; $\mu = \omega_2^\vee - 2\omega_1^\vee$: $\Psib_{1,q^{-6}}^{-1}\Psib_{1,q^{-4}}^{-1}\Psib_{2,q^{-4}}$ ; $\mu = -\omega_2^\vee$: $\Psib_{2,q^{-6}}^{-1}$;

$\mu = \omega_2^\vee - 2\omega_1^\vee$: $\Psib_{1,1}\Psib_{1,q^{-6}}^{-1}\Psib_{2,q^{-4}}\Psib_{2,q^{-2}}^{-1}$
 and $\Psib_{1,q^{-2}}\Psib_{1,q^{-4}}^{-1}$.

(2) Set $\lambda = \omega_1^\vee$, $Z_1(z) = 1 - z$, $Z_2(z) = 1 $. We have $4$ simple modules in $\mathcal{O}^{\bf Z}$:

$\mu = \omega_1^\vee$: $\Psib_{1,1}$ ; $\mu = \omega_2^\vee - \omega_1^\vee$: $\Psib_{1,q^{-4}}^{-1}\Psib_{2,q^{-2}}$ ; $\mu  = \omega_1^\vee - \omega_2^\vee$: $\Psib_{1,q^{-2}}\Psib_{2,q^{-4}}^{-1}$ ; $\mu  = -\omega_1^\vee$: $\Psib_{1,q^{-6}}^{-1}$.

\end{example}

\begin{example}\label{cex} Let us set ${\bf Z} = ((1- z q^{-6}), (1-z)(1-zq^{-2})(1-zq^{-6}))$. Then
$${\bf Z'} = (1-z, (1-zq^{-4})(1-zq^{-6})) = {\bf Z} \Lambda_{2,q^{-1}}^{-1}\Lambda_{1,q^{-4}}^{-1}$$
is associated to the monomial $Z_{1,q^6} (Z_{1,1}Z_{1,q^6}^{-1} Z_{2,q^2}^{-1}Z_{2,q^4})    Z_ {2,q^2}Z_{2,q^6} = Z_{1,1}Z_{2,q^4}Z_{2,q^6}$ in $\chi({\bf Z})$. Besides 
$${\bf Z''} = ((1-z)(1-zq^{-4})(1-zq^{-6}),1) = {\bf Z'}\Lambda_{2,q^{-5}}^{-1}$$
is associated to the monomial $Z_{1,1} Z_{2,q^4} (Z_{2,q^4}^{-1}Z_{1,q^4}Z_{1,q^6}) = Z_{1,1} Z_{1,q^4}Z_{1,q^6} $ in $\chi({\bf Z'})$.

Hence, $L({\bf Z''})$ belongs to $\mathcal{O}_{{\bf Z}}^{sh}$ as predicted by the conjecture. But ${\bf Z''}$ does not come from a monomial in $\chi({\bf Z})$. 
\end{example}

We have the following compatibility property with the ring structure. 

\begin{proposition}\cite{H0} For truncation parameters ${\bf Z}$, ${\bf Z'}$, we have 
$$K(\mathcal{O}_{{\bf Z}}^{sh})K(\mathcal{O}_{{\bf Z'}}^{sh})\subset K(\mathcal{O}_{{\bf Z Z'}}^{sh}),$$
where ${\bf Z Z'}$ is obtained by multiplying the polynomials (coordinate by coordinate).
\end{proposition}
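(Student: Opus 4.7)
The plan is to establish the categorical statement: if $M\in\mathcal{O}_{\bf Z}^{sh}$ and $M'\in\mathcal{O}_{\bf Z'}^{sh}$, then the fusion product $M\ast M'$ (constructed from the Drinfeld coproduct as in \cite{HPr}, which defines the ring structure on $K(\mathcal{O}^{sh})$) lies in $\mathcal{O}_{\bf ZZ'}^{sh}$. Since $\mathcal{O}_{\bf ZZ'}^{sh}$ is the category of representations descending to the quotient $\mathcal{U}_{q,{\bf ZZ'}}^{\mu+\mu'}(\hat{\mathfrak{g}})$, it is automatically closed under subquotients, so all simple constituents of $M\ast M'$ lie in $K(\mathcal{O}_{\bf ZZ'}^{sh})$; descending to Grothendieck groups then yields the stated inclusion of subrings.

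First I would recall from \cite{FT,H} the precise defining relations of the truncation $\mathcal{U}_{q,{\bf Z}}^\mu(\hat{\mathfrak{g}})$: the quotient is cut out by polynomiality conditions on the $A$-operators $A_i^\pm(z)$, with the polynomials $Z_i(z)$ prescribing the resulting polynomial part. The central computational step is to verify that on $M\ast M'$ the polynomial extracted from $A_i^\pm(z)$ is precisely the product $Z_i(z)Z_i'(z)$. Since the Cartan-Drinfeld generating series $\phi_i^\pm(z)$ acts diagonally (in the appropriate sense) under the Drinfeld coproduct, its eigenvalues on highest $\ell$-weight vectors of the fusion product multiply across the two factors; tracking the explicit renormalization relating $\phi_i^\pm(z)$ to $A_i^\pm(z)$ and the addition of shift parameters under the coproduct, one deduces the required multiplicativity of the prescribed polynomial part. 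An alternative route uses the realization of $A_i^\pm(z)$ as a limit of transfer-matrices attached to finite-dimensional auxiliary spaces (the $\mathfrak{sl}_2$ computation following Theorem \ref{evth} is the template, using \cite{FH,FJMM}), exploiting the natural factorization of transfer-matrices under fusion.

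The main obstacle I anticipate is the careful bookkeeping of this renormalization under the shifted Drinfeld coproduct: the shift coweights $\mu$ and $\mu'$ must add correctly to $\mu+\mu'$, and the prescribed polynomial parts must match ${\bf ZZ'}$ exactly, with no spurious scalar factors. One also needs to propagate the polynomiality identity from highest $\ell$-weight vectors to arbitrary weight spaces; this should follow from the commutativity of the Cartan-Drinfeld subalgebra and its triangular action on the weight decomposition of the fusion product. Combining the Drinfeld coproduct construction of \cite{HPr} with the polynomiality results of \cite{FH,FJMM} and Theorem \ref{H0} (which controls the highest $\ell$-weights appearing in composition series, ensuring $\Psib\preceq{\bf ZZ'}$ for every simple subquotient) should make this precise in general type.
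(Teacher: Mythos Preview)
The survey paper does not supply a proof of this proposition; it simply records the statement with a citation to \cite{H0}. So there is no argument here to compare your proposal against line by line.

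That said, your plan is the natural one and is in line with how the result is obtained in \cite{H0}. The truncation ideal is generated by the coefficients of the ``non-polynomial part'' of the $A_i^\pm(z)$, and these series are built from the Cartan--Drinfeld currents $\phi_i^\pm(z)$, which are grouplike for the Drinfeld coproduct. Hence on a fusion product the relevant series factor through the tensor legs, and polynomiality with prescribed parts $Z_i$ and $Z_i'$ on the two factors yields polynomiality with prescribed part $Z_iZ_i'$ on the product; the shift coweights add as you say. Your identification of the two technical points (matching the normalizations so that no stray scalar in $z$ appears, and passing from highest $\ell$-weight vectors to the full module via the commutativity of the Cartan--Drinfeld subalgebra) is accurate, and both are handled in \cite{H0}. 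The invocation of Theorem~\ref{H0} at the end is not strictly necessary for the inclusion of Grothendieck groups once you have the categorical statement, but it does no harm.
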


An open and important question is to understand the compatibility with the cluster algebra structures 
(and with the Conjecture \ref{clc}). For example, by the last Proposition, for the truncation parameter
$${\bf Z} = \left(\prod_{j,C_{i,j} = -1}\Psib_{j,aq_i} \right)\left(\prod_{j,C_{i,j} = -2}\Psib_{j,a}\Psib_{j,aq^2} \right)\left(\prod_{j,C_{i,j} = -3}\Psib_{j,aq^{-1}}\Psib_{j,aq}\Psib_{j,aq^3} \right).$$
the $QQ$-relation in Remark \ref{remqqsh} is a relation in $K(\mathcal{O}^{sh}_{\bf Z})$.

\section*{Acknowledgments.} I would like to thank heartily all my mentors, collaborators and students for inspiring discussions over the years. Without them this work would not have been possible. I am also very grateful to Edward Frenkel, Bernhard Keller, Bernard Leclerc, Hiraku Nakajima and Eric Vasserot for comments on a preliminary version of this lecture.

\end{document}